\newtheorem{Assumption}{Assumption}
\begin{document}

\title{Sparse Linear Regression: Sequential Convex Relaxation, Robust Restricted Null Space Property, and Variable Selection}

\author{\name Shujun Bi \email bishj@scut.edu.cn \\
       \addr School of Mathematics\\
       South China University of Technology\\
       Guangzhou, 510641, China
       \AND
       \name Yonghua Yang \email  15938292656@163.com \\
       \addr School of Mathematics\\
       South China University of Technology\\
       Guangzhou, 510641, China
       \AND
       \name Shaohua Pan \email shhpan@scut.edu.cn \\
       \addr School of Mathematics\\
       South China University of Technology\\
       Guangzhou, 510641, China}

\editor{}

\maketitle

\begin{abstract}
 For high dimensional sparse linear regression problems, we propose a sequential convex relaxation algorithm (iSCRA-TL1) by solving inexactly a sequence of truncated $\ell_1$-norm regularized minimization problems, in which the working index sets are constructed iteratively with an adaptive strategy. We employ the robust restricted null space property and sequential restricted null space property (rRNSP and rSRNSP) to provide the theoretical certificates of iSCRA-TL1. Specifically, under a mild rRNSP or rSRNSP, iSCRA-TL1 is shown to identify the support of the true $r$-sparse vector by solving at most $r$ truncated $\ell_1$-norm regularized problems, and the $\ell_1$-norm error bound of its iterates from the oracle solution is also established. As a consequence, an oracle estimator of high-dimensional linear regression problems can be achieved by solving at most $r\!+\!1$ truncated $\ell_1$-norm regularized problems. To the best of our knowledge, this is the first sequential convex relaxation algorithm to produce an oracle estimator under a weaker NSP condition within a specific number of steps,
 provided that the Lasso estimator lacks high quality, say, the supports of its first $r$ largest (in modulus) entries do not coincide with those of the true vector. 
\end{abstract}


\begin{keywords}
 High dimensional sparse linear regression, Sequential convex relaxation, Robust (sequential) restricted null space property, Variable selection, Error bound
\end{keywords}

\section{Introduction}

 We are interested in the high dimensional sparse linear regression problem, which aims at estimating the unknown $r$-sparse regression vector $\overline{x}\in\mathbb{R}^n$
 (i.e., $\overline{x}$ is assumed to have only $r$ nonzero components) from the following linear observation model
  \begin{equation}\label{LinearModel}
  b:=A \overline{x}+e,
 \end{equation}
 where $A\in\mathbb{R}^{m \times n}$ with $m<n$ is the design matrix, $b\in \mathbb{R}^m$ is the response vector, and $e\in\mathbb{R}^m$ is a noise vector. Throughout this paper, we assume that $2\le r\ll n$. To resolve this problem, it is quite natural to consider the zero-norm regularized least squares model
 \begin{equation}\label{CS}
  \min_{x\in\mathbb{R}^{n}}\ \frac{1}{2m}\|A x-b\|^2+\lambda\|x\|_0,
 \end{equation}
 where $\|\cdot\|_0$ denotes the zero-norm or cardinality of vectors,  and $\lambda>0$ is the regularization parameter. Such a model appears in the literature \citep[e.g.,][]{Blumensath2008Iterative,Candes-Lasso,Zhang-L0}. Among others, under the sparse eigenvalue condition (SEC) of $\frac{1}{m}A^\top A$ and the condition that $\min_{i\in {\rm supp}(\overline{x})} |\overline{x}_i|$ is not too small, \citet{Zhang-L0} proved that model \eqref{CS} has the variable selection consistency and its unique optimal solution coincides with the oracle estimator of \eqref{LinearModel}, i.e., an optimal solution of 
 \begin{equation}\label{oracle-esti}
  \min_{{\rm supp}(x)\subset{\rm supp}(\overline{x})}\|Ax-b\|^2.
 \end{equation}
 
 Due to the combinatorial property of the zero-norm, it is not an easy task to achieve an optimal solution of \eqref{CS}. A common way to handle this class of NP-hard problems is to use convex relaxation strategy, typically yielding a desirable solution via a single or a sequence of tractable convex optimization problems. The popular $\ell_1$-norm convex relaxation method, also known as the Lasso \citep{Tibshirani-Lasso}, is solving a single convex minimization problem
  \begin{equation}\label{Lone}
   x^{\rm Lasso}\in\mathop{\arg\min}_{x\in\mathbb{R}^n}\ \frac{1}{2m}\|A x-b\|^2+\lambda\|x\|_1.
  \end{equation}
 A host of important theoretical results are available for this estimator, and we here just to name a few. \citet{Knight-Lasso} studied asymptotic distribution of the Lasso estimator. \citet{Bickel-Lasso,Bunea-Lasso,van-Lasso,Ye-Lasso} investigated the oracle property of this estimator under restricted eigenvalue conditions (RECs).
 \citet{Lahiri-Lasso,Meinshausen-Lasso,Zhao-Lasso} proved that the irrepresentable condition is sufficient and essentially necessary for variable selection consistency of this estimator. While \citet{Negahban-Lasso} and \citet{ZTSEV} established the performance bounds for this estimator under REC or SEC. Recently, \citet{Lederer-Lasso} and \citet{Taheri-Lasso}  focused on the calibration of the Lasso's tuning parameter by developing a bootstrap-based estimator of the quantiles of the effective noise. In the sequel, we say that the Lasso estimator is of high quality if its $\ell_1$-norm or $\ell_2$-norm error from the true vector $\overline{x}$ satisfies $\|x^{\rm Lasso}-\overline{x}\|_1\leq O(r\lambda)$ or $\|x^{\rm Lasso}-\overline{x}\|\leq O(\sqrt{r}\lambda)$. Such error bounds were got under a REC in \citep{Bickel-Lasso,Negahban-Lasso}.


 Unfortunately, the Lasso estimator is often biased due to the remarkable difference between the $\ell_1$-norm and the zero-norm, and lacks the consistency for variable selection because the required conditions are not satisfied \citep[see][]{Fan-SCAD,Lahiri-Lasso,Wainwright-Lasso,ZCH-Lasso,ZH-aLasso}, so one cannot expect that the index set of the first $r$ largest (in modulus) entries of the Lasso estimator coincides with the support of the true sparse regression vector, especially when the sample size is small. Inspired by this, some researchers pay their attention to nonconvex surrogates of the zero-norm. Many nonconvex surrogates have received active exploration, including the $\ell_p\,(0<\!p<\!1)$-norm function \citep{LMJ-Lp,LuZS-Lp,ZT-Multi-a,ZH-aLasso}, the logarithm surrogate function \citep{Candesre}, the transformed $\ell_1$-norm function \citep{TL1}, the capped $\ell_1$ function \citep{Zhang-L0,ZT-Multi-b}, the ratio of $\ell_1$ and $\ell_2$ norms \citep{TML1/L2,PTKL1/L2}, the difference of $\ell_1$ and $\ell_2$ norms \citep{WenL1-L2,XinL1-L2}, and the folded concave penalty functions \citep{Fan-SCAD,Fan-FCP,GPcap,Zhang-MCP}. Among others, \citet{Zhang-L0} proved that under appropriate conditions, a global optimal solution of nonconvex surrogate models has a desirable recovery performance, and \citet{Loh15,Loh17} proved that any stationary point of nonconvex surrogates lie within statistical precision of the true vector and achieved the variable selection consistency of nonconvex surrogates, under a restricted strong convexity (RSC) condition and suitable regularity conditions on the surrogates.
 
 For the nonconvex surrogates of the zero-norm regularized problem \eqref{CS}, many tractable sequential convex relaxation algorithms were developed and demonstrated to have better performance than the Lasso in capturing sparse solutions \citep[see][]{Candesre,Fan-FCP,Fan-LAMM,LMJ-Lp,LuZS-Lp,XinL1-L2,TL1,ZT-Multi-a,ZH-aLasso,ZH-oneLasso}. Most of these algorithm are solving at the $k$th iteration a reweighted $\ell_1$-norm regularized problem of the following form
 \begin{equation}\label{wLone}
 \min_{x\in\mathbb{R}^n}\ \frac{1}{2m}\|A x-b\|^2+\lambda\sum_{i=1}^nw_i^{k-1}|x_i|
 \end{equation}
 where $w^{k-1}\!:=(w_1^{k-1}, w_2^{k-1}, \ldots, w_n^{k-1})^\top$ is a weight vector associated with the solution of the $(k\!-\!1)$th iteration and the adopted surrogate function. When $w_{T^{k-1}}^{k-1}=\textbf{1}$ for some $T^{k-1}\!\subset[n]:=\{1,\ldots,n\}$ and $w_{(T^{k-1})^c}^{k-1}\!=\textbf{0}$ for $(T^{k-1})^c:=[n]\backslash T^{k-1}$, where $\textbf{1}$ (resp. \textbf{0}) denotes the vector of all ones (resp. zeros) with dimension known from the context, the above model \eqref{wLone} becomes the truncated $\ell_{1}$-norm regularized minimization problem:
  \begin{equation}\label{TwLone}
 \min_{x\in\mathbb{R}^n}\ \frac{1}{2m}\|A x-b\|^2+\lambda\sum_{i\in T^{k-1}}|x_{i}|.
 \end{equation} 
 \citet{ZH-aLasso} and \citet{ZH-oneLasso} proved that the one-step estimator yielded by solving problem \eqref{wLone} with $k=1$ enjoys the oracle property if the initial estimator $x^0$ is close to the true regression vector $\overline{x}$. \citet{ZT-Multi-a,ZT-Multi-b} proposed a multi-stage convex relaxation algorithm by solving a sequence of subproblems of the form \eqref{wLone}, established the error bound for the solution of every subproblem from the true vector $\overline{x}$, and achieved the variable selection consistency property under an SEC and a premise that the Lasso estimator is of high quality. \citet{Fan-FCP} proposed a local linear approximation (LLA) algorithm by solving a sequence of subproblems of the form \eqref{wLone}, and proved that the LLA algorithm based on the folded concave regularizer can produce the oracle estimator after two iterations if the initial Lasso estimator is close to the true vector $\overline{x}$. \citet{Fan-LAMM} proposed an iterative local adaptive majorization-minimization (I-LAMM) algorithm, an inexact sequential convex relaxation algorithm, and established its strong oracle property for the folded concave regularizer under a localized SEC by requiring that the Lasso estimator 
 is of high quality.

 Consider that many nonconvex surroates of the zero-norm still have closed-form proximal mappings. Some researchers apply directly gradient-type methods to solve nonconvex surrogate models of \eqref{CS} \citep[e.g.,][]{Gong-PG,Lin-APG,Loh15}, but a few works provide theoretical certificates for the estimator yielded by this class of algorithm except  \citep{Loh15,Loh17}. Observe that many nonconvex surrogates of the zero-norm such as the capped $\ell_1$ function, the transformed $\ell_1$-norm function, the SCAD and MCP functions, are difference of convex (DC) functions.
 Some sequential convex relaxation algorithms were also developed to seek a stationary point of these nonconvex surrogates by solving a sequence of subproblems of the form
 \begin{equation}\label{DCwLone}
  \min_{x\in\mathbb{R}^n}\ \frac{1}{2m}\|A x-b\|^2+\lambda\Big(\|x\|_1-\sum_{i=1}^nv_i^{k-1}x_i\Big),
 \end{equation}
 where $v^{k-1}\in[-\textbf{1},\textbf{1}]\in\mathbb{R}^n$ is a vector associated with the solution of the $(k\!-\!1)$th iteration and the used surrogate \citep[e.g.,][]{DCPang,DCsp,DC30,XinL1-L2,TL1}. To the best of our knowledge, there are few works to discuss the parameter estimation performance of DC algorithms. We also notice that many iterative hard thresholding (IHT) algorithms are proposed for sparse estimation problem.   Under certain conditions such as RIP or RSC along with restricted Lipschitz smoothness, the parameter estimation accuracy of IHT can be established \citep[see, e.g.,][]{Axiotis-IHT,Blumensath-IHT,Meng-IHT,Yuan-b,Yuan-c,Zhou-IHT}. However, these conditions are too strong and often do not hold.    For more details on the optimization problems involving the zero-norm
 constraint or regularization term, the reader may refer to the recent survey paper \citep{Survey-OPTZN}.

 From the above discussions, the estimation performance of the existing sequential convex relaxation algorithms, developed by solving a sequence of problems \eqref{wLone} or \eqref{TwLone} with $x^0=x^{\rm Lasso}$, depends heavily on the quality of the Lasso estimator and also requires a REC or SEC for the theoretical guarantees. The Lasso estimator is often bad, especially when the sample size is low, and the index set of the first $r$ largest (in modulus) entries even does not coincide with that of the true regression vector $\overline{x}$. In addition, it is straightforward to prove that the REC is stronger than the null space property (NSP) \citep[see][]{SEC-NSP}, a key condition to achieve the exact reconstruction of the following model 
 \begin{equation}\label{Lone-LSC}
 \mathop{\min}_{x\in\mathbb{R}^n}\big\{\|x\|_1\ \ {\rm s.t.}\ \ \|A x-b\|\leq \varpi\big\},
 \end{equation}
 which is equivalent to the Lasso model \eqref{Lone} for an appropriate $\lambda$. However, the NSP is also very restricted, and a toy example is provided in Section \ref{sec3} to show that the existing (robust) NSP and then REC does not hold. Furthermore, a toy example is given in Section \ref{sec4} to show that the LLA algorithm \citep{Fan-FCP} and multi-stage convex relaxation algorithms \citep{ZT-Multi-a,ZT-Multi-b} do not possess variable selection consistency and fail to yield an oracle estimator.  Thus, it is still necessary to develop new sequential convex relaxation algorithms that can produce an oracle estimator under weaker conditions, which is one of the main motivation of this work. Now it is still unclear whether a sequential convex relaxation algorithm can produce a high-quality solution to sparse linear regression problem when the Lasso estimator lacks good quality, and if yes, how many convex relaxation subproblems are enough. Resolving this open question becomes another motivation of this work. In addition, the current research on the variable selection property and error bounds mainly focuses on exact sequential convex relaxation algorithms for the zero-norm regularized problem \eqref{CS} or sparse linear regression problem. Due to errors, in practical computation only inexact solutions are available. This inspires us to study inexact sequential convex relaxation algorithms in this work along the line of \citep{Fan-LAMM}. 
 \subsection{Main contributions}\label{sec1.1}

  We propose an inexact sequential convex relaxation algorithm (iSCRA-TL1, for short) to seek a high-quality solution for the 
  high dimensional sparse linear regression problem, 
  and provide its theoretical certificates by studying the variable selection property and the $\ell_1$-norm estimation error bound.
  The main contributions are listed as follows.
  \begin{itemize}
  \item Inspired by the difference between the first-order optimality condition of \eqref{TwLone} and the stationary point condition of the zero-norm regularized problem \eqref{CS}, we propose the iSCRA-TL1 by  solving inexactly a sequence of truncated $\ell_1$-norm regularized problems \eqref{TwLone} with a simple bound constraint on those entries with indices from $(T^k)^c$, where the index set $T^{k}\!:=T^{k-1}\backslash I^{k}$ is constructed with $I^{k}\!:=\{i\in T^{k-1}\,|\,|x_i^{k}|\geq\varrho\|x_{T^{k-1}}^{k}\|_\infty\}$ for a constant $\varrho\in(0, 1]$ and $T^0:=[n]$.
  As far as we know, this is the first sequential convex relaxation algorithm based on problem \eqref{TwLone} with the index sets $T^k$ satisfying the nonincreasing property $T^k\subset T^{k-1}$ for all $k$. Though the multi-stage convex relaxation method proposed by \citet{ZT-Multi-b} with the capped $\ell_1$-surrogate also solves a sequence of problems \eqref{TwLone}, the involved index sets $T^k$ do not have such a property.

  \item We use the robust (sequential) restricted null space property of $A$, rRNSP (rSRNSP) introduced recently in \citet{Bi-iSCRA}, to provide the theoretical guarantees of iSCRA-TL1. To the best of our knowledge, there is no work to use the null space property of $A$ to study the variable selection property and the $\ell_1$-norm error bound of sequential convex relaxation methods. We also explore the relations among the rRNSP, rSRNSP, robust NSP and REC, and show that the rRNSP and rSRNSP are strictly weaker than the robust NSP in \citet{Foucart-CS}
  and the REC in \citet{Bickel-Lasso}. Among others, the latter two conditions are required to ensure the solution of basis pursuit problem \eqref{Lone-LSC} and the Lasso estimator to have good quality, respectively.   

  \item Under a mild rRNSP or rSRNSP, we prove that iSCRA-TL1 can identify
  the support of the true $r$-sparse regression vector via at most $r$ truncated $\ell_1$-norm regularized subproblems. This means that if one more truncated $\ell_1$-norm regularized problem is solved exactly,
  then the oracle estimator can be achieved and the consistent variable selection property can be established. We also derived the $\ell_1$-norm estimation error bound of each iterate of iSCRA-TL1 from the oracle estimator. As far as we know, this is the first theoretical certificate for a sequential convex relaxation algorithm to achieve the oracle estimator via at most $r\!+\!1$ convex relaxation subproblems under a rSRNSP condition. In particular, the proposed iSCRA-TL1 does not require the Lasso estimator to be of high quality, even allowing the Lasso estimator to be non-unique, ${\rm supp}(x^{\rm Lasso},r)\neq {\rm supp}(\overline{x})$ or $\|x^{\rm Lasso}-\overline{x}\|_1\nleqslant O(r\lambda)$, which is the significant difference between our results and the existing ones in \citep[e.g.,][]{Fan-FCP,ZT-Multi-a}.
  Two toy examples are given in Section \ref{sec4.1} to show that the sequential convex relaxation algorithms
  LLA-SCAD (or MCP) by \citet{Fan-FCP} and the MSCR-cL1 by \citet{ZT-Multi-b}, starting from the Lasso estimator, are inconsistent for variable selection and cannot find the oracle estimator,
  but the exact version of iSCRA-TL1 is successful.
 \end{itemize}
 \subsection{Notation}\label{sec1.3}
	
 Throughout this paper, $E$ represents an identity matrix whose dimensional is known from the context. For an integer $k\ge 1$, write $[k]\!:=\{1,\ldots,k\}$ and $[k]_{+}\!:=\{0,1,\ldots,k\}$. For a real number $a$, let $a_+\!:=\max(a, 0)$. For a vector $x\in\mathbb{R}^n$, $\|x\|_1,\|x\|$ and $\|x\|_{\infty}$ denote its $\ell_1$-norm, $\ell_2$-norm and $\ell_\infty$-norm, respectively, ${\rm sign}(x)\!:=({\rm sign}(x_1), {\rm sign}(x_2), \ldots, {\rm sign}(x_n))^{\top}$ denotes its sign vector, ${\rm supp}(x)$ represents its support, ${\rm supp}(x,k)$ denotes the index set of the first $k$ largest entries of the vector $|x|:=(|x_1|, |x_2|,\ldots, |x_n|)^\top$, and $|x|_k^{\downarrow}$ denotes the $k$th largest component of $|x|$. For an index set $J\subset[n]$, $J^c$ means the complement of $J$ in $[n]$, $|J|$ denotes the cardinality of $J$, and $x_J\in\mathbb{R}^{|J|}$ is the subvector of $x\in\mathbb{R}^n$ obtained by removing those entries $x_j$ with $j\notin J$. For a matrix $B\in\mathbb{R}^{m \times n}$, $\|B\|$ and $\|B\|_\infty$
 denote the spectral norm and the elementwise maximum norm of $B$, respectively, and for an index set $J\subset[n]$, $B_J\in\mathbb{R}^{m\times|J|}$ denotes
 the matrix obtained by removing those columns $B_j$ of $B$ with $j\notin J$. For a closed set $\Omega\subset\mathbb{R}^n$, $\delta_{\Omega}$ denotes the indicator function of $\Omega$, i.e., $\delta_{\Omega}(x)=0$ if $x\in\Omega$, otherwise $\delta_{\Omega}(x)=\infty$; while $\mathbb{I}_{\Omega}$ means the characteristic function of $\Omega$, i.e., $\mathbb{I}_{\Omega}(x)=1$ if $x\in \Omega$, otherwise $\mathbb{I}_{\Omega}(x)=0$. We always write $\overline{S}:={\rm supp}(\overline{x})$ and $\overline{S}^{c}=[n]\backslash\overline{S}$.
  
 \subsection{Outline of this paper}\label{sec1.2}

 The rest of this paper is organized as follows. Section \ref{sec2}  characterizes uniformly positive lower bounds of the first $r$ largest
 (in modulus) entries of feasible solutions to the least-squares constraint. Section \ref{sec3} introduces two new types of NSPs, i.e. rRNSP and rSRNSP, and clarify their relation with the existing robust NSP and REC. In Section \ref{sec4}, we describe the iteration steps of iSCRA-TL1 for the zero-norm regularized problem \eqref{CS} or sparse linear regression problem, and provide its theoretical certificates.   Section \ref{sec5} shows the effectiveness of iSCRA-TL1 via numerical tests on synthetic and real data examples. The conclusions are made in Section \ref{sec6}. The proofs of the main results are deferred to the Appendix \ref{secC}.
 

\section{Lower bound of solutions for least-squares constraint}\label{sec2}

 To use the new robust restricted null space properties of $A$ in the next section, we need a uniformly positive lower bound for the first $r$ largest (in modulus) entries of feasible solutions to the least-squares constraint $\|Ax-b\|\le\! \sqrt{\|e\|^2+2m\lambda(1\!+\!\varsigma_0)\|\overline{x}\|_1}$ for some $\varsigma_0\in[0,1)$. This section aims at providing such a bound. Clearly, the most tight one is the value
 \begin{equation}\label{seqbeta1} 	
  \beta_k(\lambda):=\min_{x\in\mathbb{R}^n}\Big\{|x|_{k+1}^{\downarrow}\ \text { s.t.}\ \|Ax-b\|\le\! \sqrt{\|e\|^2+2m\lambda(1\!+\!\varsigma_0)\|\overline{x}\|_1}\Big\}\quad{\rm for}\ \ k\in [r\!-\!1].
 \end{equation} 
 As will be shown in Section \ref{sec4}, the new null space properties of $A$ often involve the array
 $\beta(\lambda):=\{\beta_0(\lambda),\beta_1(\lambda),\ldots,\beta_{r-1}(\lambda)\}$ with $\beta_0(\lambda)$ defined as follows:
 \begin{equation}\label{seqbeta0} 	
 \beta_0(\lambda):=\!\!\min_{z\in \bigcup_{\|\xi\|_\infty\leq \varsigma_0}\mathcal {S}(\xi)}\ \|z\|_{\infty}\ \text { with }\ \mathcal {S}(\xi)\!:=\mathop{\arg\min}_{x\in \mathbb{R}^n}\Big\{\frac{1}{2m}\|Ax-b\|^2+\lambda(\|x\|_1-\langle \xi, x\rangle)\Big\}.
 \end{equation}
 Here we omit the dependence of $\beta_k(\lambda)$ on $\varsigma_0$ for simplicity. For any $\|\xi\|_\infty\!\le\!\varsigma_0$ and $x^*\in\mathcal{S}(\xi)$, using the feasibility of $\overline{x}$ to the minimization problem in the definition of $\mathcal{S}(\xi)$
 leads to 
 \begin{align*}
  \frac{1}{2m}\|Ax^*-b\|^2+\lambda\sum_{i\in J^c}|x_i^*|-\lambda\langle \xi, x^*\rangle
  &\le\frac{1}{2m}\|A\overline{x}-b\|^2+\lambda\sum_{i\in J^c}|\overline{x}_i|-\lambda\langle \xi, \overline{x}\rangle\nonumber\\
  &\leq {\|e\|^2}/{(2m)}+\lambda(1+\|\xi\|_\infty)\|\overline{x}\|_1\\
  &\le {\|e\|^2}/{(2m)}+\lambda(1\!+\!\varsigma_0)\|\overline{x}\|_1,
  \end{align*}
 which by  $\|x^*\|_1\!-\!\langle \xi, x^*\rangle\!\ge\! 0$ implies that $\|Ax^*-b\|\le\!\sqrt{\|e\|^2+2m\lambda(1\!+\!\varsigma_0)\|\overline{x}\|_1)}$. Hence,  
  $\mathcal {S}(\xi)\subset\{x\in \mathbb{R}^n\,|\,\|Ax-b\|\leq \sqrt{\|e\|^2+2m\lambda(1\!+\!\varsigma_0)\|\overline{x}\|_1}\}$. This, by the definition of $\beta_0(\lambda)$, implies that 
\(
 \beta_{0}(\lambda)\geq\min\limits_{x\in\mathbb{R}^n}\big\{\|x\|_\infty\ \text { s.t.}\ \|Ax-b\|\leq \sqrt{\|e\|^2+2m\lambda(1\!+\!\varsigma_0)\|\overline{x}\|_1}\big\}.
 \)
 Consequently, 
 \[
  \beta_0(\lambda)\ge\beta_1(\lambda)\ge\cdots\ge\beta_{r-1}(\lambda).
 \] 
 
 Note that any optimal solution of problems \eqref{Lone} and \eqref{TwLone} is feasible to the minimization problem in \eqref{seqbeta1}. Together with the above order relation, $\beta_{r-1}(\lambda)>0$ is a sufficient condition for any optimal solution of problems \eqref{Lone} and \eqref{TwLone} to have at least $r$ nonzero components. This means that, to achieve a high-quality solution of the $r$-sparse linear regression problem, $\beta_{r-1}(\lambda)>0$ is generally required. Due to the nonconvexity of the objective function of \eqref{seqbeta1} and the feasible set of \eqref{seqbeta0}, the exact computation of $\beta_{k}(\lambda)$ for $k\in[r\!-\!1]_{+}$ is not an easy task, so we shall derive a positive lower bound for them under suitable conditions. Define
 \begin{equation}\label{sigmaA}
  \sigma_{\!A}(l):=\min _{\|x \|_0\le l,\,\|x\|=1}\frac{1}{\sqrt{m}}\|Ax\|.
 \end{equation}
 Clearly, $(\sigma_{\!A}(l))^2$ is the smallest $l$-sparse eigenvalue of $\frac{1}{m}A^{\top}A$ introduced in \citet{ZTSEV}, $\sigma_{\!A}(l_1)\leq\sigma_{\!A}(l_2)$ when $l_1\ge l_2$, and when $\sigma_{\!A}(l)>0$, it holds that ${\rm rank}(A)\ge l$.
 The following proposition provides a positive lower bound of
 $\beta_{k}(\lambda)$ for all $k\!\in[r\!-\!1]_{+}$ under suitable restrictions on $\sigma_{\!A}(2r\!-\!1)$ and $|\overline{x}|_{r}^{\downarrow}$, whose proof is included in Appendix \ref{secB}. This lower bound will be used in Section \ref{sec4.2} to provide theoretical guarantees of iSCRA-TL1. 
 \begin{proposition}\label{bound-feasol}
 Suppose that $\sigma_{\!A}(2r\!-\!1)>0$ and $|\overline{x}|_{r}^{\downarrow}>\frac{2\|e\|+\sqrt{2m\lambda(1+\varsigma_0)\|\overline{x}\|_1}}{\sqrt{m}\sigma_{\!A}(2r-1)}$.
 Then, for any vector $x$ such that $\|Ax-b\|\leq\sqrt{\|e\|^2+2m\lambda(1\!+\!\varsigma_0)\|\overline{x}\|_1}$ and any $k\in[r\!-\!1]_{+}$, 
 \begin{equation}
  |x|_{k+1}^{\downarrow}\ge\vartheta_k:=
  \frac{\sigma_{\!A}(r+k) \sqrt{m(r\!-\!k)}\,|\overline{x}|_{r}^{\downarrow}-\|e\|-\sqrt{\|e\|^2+2m\lambda(1\!+\!\varsigma_0)\|\overline{x}\|_1}}{\sqrt{n-k}\|A\|}>0,\nonumber
 \end{equation}
 and consequently $\beta_k(\lambda)\ge\vartheta_k>0$ for all $k\in[r\!-\!1]_{+}$.
 \end{proposition}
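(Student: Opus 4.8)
The plan is to fix any vector $x$ with $\|Ax-b\|\le\eta$, where I abbreviate $\eta:=\sqrt{\|e\|^2+2m\lambda(1+\varsigma_0)\|\overline{x}\|_1}$, and for the given $k\in[r-1]_+$ put $t:=|x|_{k+1}^{\downarrow}$ and $S:={\rm supp}(x,k)$; then $|x_i|\le t$ for all $i\in S^c$ and $|S^c|=n-k$. Writing $x_S$ and $x_{S^c}$ for the restrictions of $x$ to $S$ and to $S^c$ padded with zeros, so that $x=x_S+x_{S^c}$, I would lower-bound $t$ by comparing the ``head'' $x_S$ with $\overline{x}$ while absorbing the ``tail'' $x_{S^c}$ into the slack. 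Using $b=A\overline{x}+e$ and the triangle inequality one gets $\|A(x_S-\overline{x})\|\le\|A(x-\overline{x})-e\|+\|e\|+\|Ax_{S^c}\|\le\eta+\|e\|+\|A\|\,\|x_{S^c}\|$, and since $x_{S^c}$ has at most $n-k$ nonzero entries, each of modulus at most $t$, we have $\|x_{S^c}\|\le\sqrt{n-k}\,t$; this controls the tail.

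For the head I would invoke the sparse eigenvalue $\sigma_{\!A}$. The vector $x_S-\overline{x}$ is supported on $S\cup\overline{S}$, whose cardinality is at most $k+r\le 2r-1$, so by the definition \eqref{sigmaA} of $\sigma_{\!A}$ and its monotonicity, $\|A(x_S-\overline{x})\|\ge\sqrt{m}\,\sigma_{\!A}(r+k)\,\|x_S-\overline{x}\|$. To lower-bound $\|x_S-\overline{x}\|$, the idea is to look only at the coordinate block $W:=\overline{S}\setminus S$: it has at least $r-k$ elements, $x_S$ vanishes on $W$ (disjointness from $S$), and $|\overline{x}_i|\ge|\overline{x}|_r^{\downarrow}$ there because $\overline{x}$ has exactly $r$ nonzeros; hence $\|x_S-\overline{x}\|\ge\sqrt{r-k}\,|\overline{x}|_r^{\downarrow}$. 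Chaining these estimates gives $\sqrt{m}\,\sigma_{\!A}(r+k)\sqrt{r-k}\,|\overline{x}|_r^{\downarrow}\le\eta+\|e\|+\sqrt{n-k}\,\|A\|\,t$, and solving for $t$ yields precisely $t\ge\vartheta_k$.

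It then remains to verify $\vartheta_k>0$, i.e. positivity of the numerator of $\vartheta_k$. Here $\sqrt{a+b}\le\sqrt a+\sqrt b$ gives $\eta+\|e\|\le 2\|e\|+\sqrt{2m\lambda(1+\varsigma_0)\|\overline{x}\|_1}$, while the hypothesis on $|\overline{x}|_r^{\downarrow}$ together with $\sigma_{\!A}(r+k)\ge\sigma_{\!A}(2r-1)$ and $\sqrt{r-k}\ge 1$ (both valid for $0\le k\le r-1$) gives $\sigma_{\!A}(r+k)\sqrt{m(r-k)}\,|\overline{x}|_r^{\downarrow}\ge\sqrt{m}\,\sigma_{\!A}(2r-1)\,|\overline{x}|_r^{\downarrow}>2\|e\|+\sqrt{2m\lambda(1+\varsigma_0)\|\overline{x}\|_1}$; combining the two shows the numerator is positive, so $\vartheta_k>0$ for every $k\in[r-1]_+$. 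Finally, since any $x$ feasible to the program in \eqref{seqbeta1} satisfies $|x|_{k+1}^{\downarrow}\ge\vartheta_k$, we obtain $\beta_k(\lambda)\ge\vartheta_k$ for $k\in[r-1]$, and $\beta_0(\lambda)\ge\vartheta_0$ follows from the inclusion $\mathcal{S}(\xi)\subset\{x:\|Ax-b\|\le\eta\}$ already recorded before the proposition. I do not expect a genuine obstacle: the only care needed is the bookkeeping — that $|S\cup\overline{S}|\le k+r$, that $W$ is disjoint from $S$, and that the bound $\|x_{S^c}\|_0\le n-k$ is used as tightly as stated — together with the realization that estimating $\|Ax_{S^c}\|\le\|A\|\sqrt{n-k}\,t$ somewhat crudely is exactly what delivers the clean closed form $\vartheta_k$.
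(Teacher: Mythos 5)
Your proposal is correct and is essentially the paper's own argument: the identity $A(x_S-\overline{x})=(Ax-b)+e-Ax_{S^c}$ is the same decomposition the paper uses (there written as a lower bound on $\|A_{I^c}x_{I^c}\|$ rather than an upper bound on the head term), and the subsequent steps — the sparse eigenvalue bound on the set $S\cup\overline{S}$ of cardinality at most $r+k$, the restriction to $\overline{S}\setminus S$ to extract $\sqrt{r-k}\,|\overline{x}|_r^{\downarrow}$, the crude bounds $\|Ax_{S^c}\|\le\|A\|\sqrt{n-k}\,|x|_{k+1}^{\downarrow}$, and the positivity and $\beta_k(\lambda)$ conclusions — coincide exactly. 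No gaps.
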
 
 \begin{remark}
 Suppose that $e\sim N(\textbf{0},\sigma^2E)$. Then, when $m$ is suitably large, the event $\|e\|^2\leq \sigma^2(m+2\sqrt{2m})$ will occur with high probability.
 In addition, if choosing $\lambda=O(\sigma\sqrt{\frac{\log n}{m}})$ \citep[see][]{Bickel-Lasso,Negahban-Lasso}, then $\lambda\|\overline{x}\|_1=O(\sigma r\sqrt{\frac{\log n}{m}})$. Thus, the assumption on $|\overline{x}|_{r}^{\downarrow}$ in Proposition \ref{bound-feasol} will hold with high probability when $|\overline{x}|_{r}^{\downarrow}\geq c(\sigma\!+\!\sqrt{\sigma})$ for some constant $c>0$ (depends on the smallest $(2r\!-\!1)$-sparse singular value $\sigma_A(2r\!-\!1)$). As the sample size $m$ increases, it is more likely for $\vartheta_{r-1}>0$ to hold by noting that $\lambda$ becomes small  and $\frac{\vartheta_{r-1}\sqrt{n-r+1}\|A\|}{\sqrt{m}}=\sigma_{\!A}(2r\!-\!1)\,|\overline{x}|_{r}^{\downarrow}-\|e\|/m-\sqrt{\|e\|^2/m+2\lambda(1\!+\!\varsigma_0)\|\overline{x}\|_1}$. 
 \end{remark} 
 \section{Robust restricted null space properties}\label{sec3}

 Recently, \citet{Bi-iSCRA} introduced two new restricted null space properties of $A$, and proved that they are weaker than the common stable NSP, while their robust versions are not stronger than the existing robust NSP. In this section, we show that the robust versions of the new restricted null space properties are strictly weaker than the existing robust NSP \citep{Foucart-CS} and REC \citep{Bickel-Lasso,Negahban-Lasso}. It is well known that the robust NSP is the key condition to achieve the robustness of the $\ell_1$-norm minimization problem \eqref{Lone-LSC} with respect to measurement errors, while the REC is the key condition to derive $\ell_1$-norm and $\ell_2$-norm error bounds for the Lasso estimator.
 \begin{definition}\label{NSP-def}
  The matrix $A$ is said to have the robust null space property of order $r$ with constants $\gamma\in(0,1)$ and $\tau>0$
  if for any $S\subset[n]$ with $|S|=r$ and $d\in\mathbb{R}^n$,
  \begin{equation}\label{ineq1-NSP}
  \sum_{i\in S} |d_i| \le \gamma\sum_{i \in S^c}|d_i|+\tau\sqrt{{r}/{m}}\|A d\|.
  \end{equation}
 \end{definition}

 It was shown by \citep[][Theorem 6.13]{Foucart-CS} that if the $2r$th restricted isometry constant of $\frac{1}{\sqrt{m}}A$ is smaller than $\frac{4}{\sqrt{41}}$, then $A$ satisfies the robust null space property of order $r$ with constants $\gamma\in(0,1)$ and $\tau>0$ depending only on its $2r$th restricted isometry constant. As is well known, the robust NSP of order $r$ is very strong and usually does not hold due to the remarkable difference between the $\ell_1$-norm and the zero-norm. This inspires us to introduce two novel robust restricted null space properties of $A$ in \citep[][]{Bi-iSCRA}.
 \begin{definition}\label{RNSP-def}
  Fix any $l\in[r]$ and $\eta\ge 0$. The matrix $A$ is said to have the $(l,\eta)$-robust restricted null space property with constants $M>0,\,\gamma\in(0,1)$ and $\tau>0$, denoted by $(l,\eta)$-rRNSP($M,\gamma,\tau$), if for any $I\subset S\subset[n]$ with $|I|=l,\,|S|=r$ and $d\in\mathbb{R}^n$ with $\|d_{S^c}\|_{\infty} \ge\eta$,
  \begin{equation}\label{ineq1-RNSP}
   \sum_{i\in I}\min\big\{|d_i|,\,2M\!-\!|d_i|\big\}
   \le\gamma\sum_{i \in S^c}|d_i|+\tau\sqrt{{r}/{m}}\|Ad\|.
  \end{equation}
 \end{definition}
 \begin{remark}\label{RNSP-remark1}
 As the left side of \eqref{ineq1-RNSP} is no more than that of \eqref{ineq1-NSP}, if inequality \eqref{ineq1-RNSP} does not hold for some $d\in\mathbb{R}^n$ with $\|d_{S^c}\|_{\infty} \ge\eta$, then inequality \eqref{ineq1-NSP} does not either. This shows that the $(l,\eta)$-rRNSP($M,\gamma,\tau$) of $A$ for any $l\in[r]$ is not stronger than its robust NSP of order $r$ with constants $\gamma\in(0,1)$ and $\tau>0$. In fact, due to Example \ref{Exam1-rSRNSP} later, the $(l,\eta)$-rRNSP($M,\gamma,\tau$) of $A$ is strictly weaker than its robust NSP of order $r$.
\end{remark}
 \begin{definition}\label{SRNSP-def}
  Let $\alpha=\{\alpha_0,\alpha_1,\ldots,\alpha_{r-1}\}\ne 0$ be a nonincreasing ordered nonnegative array, i.e.
  $\alpha_0\ge\alpha_1\ge\cdots\geq \alpha_{r-1}\geq0$.
  The matrix $A$ is said to have the robust sequential RNSP of order $r$
  relative to array $\alpha$ with constants $M>0,\,\gamma\in(0,1)$ and $\tau>0$, denoted by $(r,\alpha)$-rSRNSP($M,\gamma,\tau$),
  if for any $\emptyset\neq I \subset S$ with $|S|=r$ and $d\in\mathbb{R}^n$ with $\|d_{S^c}\|_{\infty} \ge \alpha_{r-|I|}$,
  \begin{equation}\label{ineq1-SRNSP}
  \sum_{i \in I} \min\{|d_i|,\,2M\!-|d_i|\}
  \le\gamma\sum_{i \in S^c}|d_i|+\tau\sqrt{{r}/{m}}\|Ad\|.
  \end{equation}
 \end{definition}
 \begin{remark}\label{SRNSP-remark1}
 Compared with Definition \ref{RNSP-def}, the $(r,\alpha)$-rSRNSP($M,\gamma,\tau$) of $A$ is equivalent to saying that
 $A$ has the $(l,\alpha_{r-l})$-rRNSP($M,\gamma,\tau$) for all $l\in[r]$. Together with Remark \ref{RNSP-remark1}, the $(r,\alpha)$-rSRNSP of $A$ with constants $M>0$, $\gamma\in(0,1),\,\tau>0$ is not stronger than its robust NSP of order $r$ with constants $\gamma$ and $\tau$. Thus, from Example \ref{Exam1-rSRNSP} later, we conclude that the $(r,\alpha)$-rSRNSP($M,\gamma,\tau$) of $A$
 is strictly weaker than its robust NSP of order $r$.
 \end{remark}
 \begin{remark}\label{Rem-rSRNSP}
  By Definition \ref{RNSP-def}, for a given $l\in[r]$, when $\eta$ gets larger, $M$ gets smaller or $\tau$ gets larger, the corresponding $(l,\eta)$-rRNSP$(M,\gamma,\tau)$ becomes weaker;
  while by Definition \ref{SRNSP-def}, when the nonincreasing ordered nonnegative array $\alpha=\{\alpha_0,\alpha_1,\ldots,\alpha_{r-1}\}$ becomes larger, $M$ gets smaller or $\tau$ gets larger, the corresponding $(r,\alpha)$-rSRNSP$(M,\gamma,\tau)$ becomes weaker. Of course, the values of $\eta,M$ and $\tau$ appearing in $(l,\eta)$-rRNSP$(M,\gamma,\tau)$ depend on the property of $A$, so do those of $\alpha,M$ and $\tau$ appearing in $(r,\alpha)$-rSRNSP$(M,\gamma,\tau)$. Now it is unclear which type of matrices satisfies the  $(l,\eta)$-rRNSP$(M,\gamma,\tau)$ with larger $\eta$ or smaller $M$ and the $(r,\alpha)$-rSRNSP$(M,\gamma,\tau)$ with larger $\alpha$ or smaller $M$. From \citep[][Theorem 6.13 and Theorem 9.2]{Foucart-CS}, an $m\times n$ Gaussian matrix $A$ satisfies the robust NSP with a high probability when $m$ is suitably large, so it satisfies $(l,\eta)$-rRNSP$(M,\gamma,\tau)$ for any $l\in[r]$ and $\eta>0,M>0$, and the $(r,\alpha)$-rSRNSP$(M,\gamma,\tau)$ for any $M>0$ and nonincreasing ordered nonnegative array $\alpha$ with a high probability for such $m$.
  \end{remark}

 The REC, first introduced by \citet{Bickel-Lasso}, a common condition to achieve the $\ell_1$-norm and $\ell_2$-norm error bound of the Lasso estimator. It takes the following form
 \begin{equation}\label{REC-ineq}
  0<\chi(c):=\min_{S\subset[n], |S|=r}\min_{ \|d_{S^c}\|_1\leq c\|d_S\|_1, \|d\|=1}\ \frac{1}{\sqrt{m}}\|Ad\|\ \ {\rm for}\ c>0.
 \end{equation}
 By comparing with the definition of $\sigma_{\!A}(r)$, for any $c>0$, if $\chi(c)>0$, then $\sigma_{\!A}((1+c)r)>0$. 
The following proposition along with Example \ref{Exam1-rSRNSP} shows that the REC with $c>1$ is strictly stronger than the $(r,\beta(\lambda))$-rSRNSP$(\|\overline{x}\|_{\infty},\frac{1}{c},\frac{1}{\chi(c)})$ and $(r,\beta_0(\lambda))$-rRNSP$(\|\overline{x}\|_{\infty},\frac{1}{c},\frac{1}{\chi(c)})$, and is not weaker than the robust NSP with constants $\frac{1}{c}$ and $\frac{1}{\chi(c)}$.
 \begin{proposition}\label{REC-rNSP}
  If the REC holds with constant $c>1$, then the matrix $A$ satisfies the robust NSP of order $r$ with constants $\gamma=\frac{1}{c}$ and $\tau=\frac{1}{\chi(c)}$, and consequently, $A$ has the $(r,\beta(\lambda))$-rSRNSP($M,\gamma,\tau$) and $(r,\beta_{r-1}(\lambda))$-rRNSP($M,\gamma,\tau$) for $\beta(\lambda)=\{\beta_0(\lambda),\beta_1(\lambda),\ldots,\beta_{r-1}(\lambda)\}$ and any $M>0$, where $\beta_j(\lambda)$ for $j\in[r\!-\!1]_{+}$ is defined by \eqref{seqbeta1}-\eqref{seqbeta0}.
  \end{proposition}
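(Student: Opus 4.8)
The plan is to prove the proposition in two stages: first establish that the REC with constant $c>1$ forces the robust NSP of order $r$ with $\gamma=\frac1c$ and $\tau=\frac1{\chi(c)}$, which is the only step carrying genuine content; then deduce the two ``consequently'' assertions by quoting Remarks \ref{RNSP-remark1} and \ref{SRNSP-remark1} together with the elementary inequality $\min\{|d_i|,2M-|d_i|\}\le|d_i|$, valid for every $M>0$.

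For the first stage, fix $S\subset[n]$ with $|S|=r$ and $d\in\mathbb{R}^n$; the goal is $\|d_S\|_1\le\frac1c\|d_{S^c}\|_1+\frac1{\chi(c)}\sqrt{r/m}\,\|Ad\|$. I would split into two cases. If $\|d_{S^c}\|_1\ge c\|d_S\|_1$, the desired inequality is immediate since the term $\frac1{\chi(c)}\sqrt{r/m}\,\|Ad\|$ is nonnegative. Otherwise $\|d_{S^c}\|_1\le c\|d_S\|_1$ (the case $d=0$ being trivial, assume $d\ne0$), so $d/\|d\|$ is feasible for the inner minimization defining $\chi(c)$ in \eqref{REC-ineq}, which after homogenization gives $\frac1{\sqrt m}\|Ad\|\ge\chi(c)\|d\|$. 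Combining this with $\|d\|\ge\|d_S\|$ and the Cauchy--Schwarz bound $\|d_S\|_1\le\sqrt{|S|}\,\|d_S\|=\sqrt r\,\|d_S\|$ yields $\|d_S\|_1\le\frac{\sqrt r}{\chi(c)\sqrt m}\|Ad\|=\frac1{\chi(c)}\sqrt{r/m}\,\|Ad\|$, which is even stronger than required. Since $S$ and $d$ were arbitrary, $A$ has the robust NSP of order $r$ with $\gamma=\frac1c\in(0,1)$ (as $c>1$) and $\tau=\frac1{\chi(c)}>0$ (as the REC asserts $\chi(c)>0$).

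For the second stage, recall from Section \ref{sec2} that $\beta(\lambda)=\{\beta_0(\lambda),\ldots,\beta_{r-1}(\lambda)\}$ is a nonincreasing nonnegative array, hence admissible in Definition \ref{SRNSP-def}. If inequality \eqref{ineq1-RNSP} failed for some $S$, some $I\subset S$ with $|I|=r$, and some $d$ with $\|d_{S^c}\|_\infty\ge\beta_{r-1}(\lambda)$, then, since the left-hand side of \eqref{ineq1-RNSP} is at most $\sum_{i\in I}|d_i|\le\sum_{i\in S}|d_i|$ while the right-hand sides coincide, inequality \eqref{ineq1-NSP} would also fail for $S$ and $d$, contradicting the robust NSP just proved; thus $A$ has the $(r,\beta_{r-1}(\lambda))$-rRNSP$(M,\gamma,\tau)$ for every $M>0$. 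Likewise, by Remark \ref{SRNSP-remark1} the $(r,\beta(\lambda))$-rSRNSP$(M,\gamma,\tau)$ is precisely the conjunction over $l\in[r]$ of the $(l,\beta_{r-l}(\lambda))$-rRNSP$(M,\gamma,\tau)$, each of which follows from the robust NSP by the same comparison of left-hand sides; hence $A$ has the $(r,\beta(\lambda))$-rSRNSP$(M,\gamma,\tau)$ for every $M>0$.

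I do not expect a real obstacle: the core is the classical ``REC implies NSP'' reduction. The only points needing a little care are the homogenization of the REC from unit-norm vectors to arbitrary $d$, the passage from $\|d_S\|$ to $\|d_S\|_1$ via Cauchy--Schwarz, and verifying that $\gamma=\frac1c$ lands in $(0,1)$ and $\tau=\frac1{\chi(c)}>0$; the reduction of the two restricted null space properties to the robust NSP is purely a matter of invoking the earlier remarks.
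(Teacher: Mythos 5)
Your proposal is correct and follows essentially the same route as the paper's proof: the identical two-case split ($\|d_{S^c}\|_1$ versus $c\|d_S\|_1$) combined with homogenization of the REC and the bound $\|d_S\|_1\le\sqrt{r}\|d_S\|$, followed by the reduction of the rRNSP and rSRNSP to the robust NSP via the comparison of left-hand sides in Remarks \ref{RNSP-remark1} and \ref{SRNSP-remark1}. Your additional check that $\beta(\lambda)$ is a nonincreasing nonnegative array is a small but welcome extra detail the paper leaves implicit.
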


 The proof of Proposition \ref{REC-rNSP} is put in Appendix \ref{app-3-3}.
 The following example shows that the $(r,\beta(\lambda))$-rSRNSP($M,\gamma,\tau$) of the matrix $A$ with $\beta(\lambda)=\{\beta_0(\lambda),\beta_1(\lambda),\ldots,\beta_{r-1}(\lambda)\}$ and constants $M=\|\overline{x}\|_\infty$, $\gamma\in(0, 1),\,\tau>0$ is strictly weaker than the robust NSP and REC.
 \begin{example}\label{Exam1-rSRNSP}
  Let the true vector $\overline{x}\in\mathbb{R}^5$, the design matrix $A\in\mathbb{R}^{4\times 5}$, and the response vector $b\in\mathbb{R}^4$ (without noise for simplicity) be given as follows
  \begin{equation}\label{ExampAb-NSP-s}
  \overline{x}=\left(
   \begin{array}{c}
    2\\
   	10 \\
   	0\\
   	0 \\
   	0 \\
   \end{array}
   \right),\ A=\frac{1}{2}\left(
  	\begin{array}{ccccc}
  	    1 &    0      & -2         & 0          & 0\\
  		1 &    0      & 0          & -2         & 0\\
        1 &    0      & 0          & 0          & -2\\
  	   -1 &    2      & 0          & 0          & 0\\
  	\end{array}
  	\right),\
   b=A\overline{x}=\left(
  	\begin{array}{c}
  		1\\
        1\\
  		1 \\
  		9 \\
  	\end{array}\right).
  \end{equation}
  Obviously, $r=2$. As ${\rm rank}(A)=4$, the null space of $A$ is ${\rm Null}(A)=\{( 2t,\, t,\, t,\, t,\,t)^\top\ |\ t\in \mathbb{R}\}$.

 \noindent
 {\bf(1) The REC for any $c\geq2$ is false.}	For any $d^0\in {\rm Null}(A)$ and $S\subset[5]$ with $|S|=r$, we have $\|d^0_{S^c}\|_1\leq c\|d^0_S\|_1$ for any $c\ge 2$, so $\chi(c)=\min\limits_{S\subset[5], |S|=r}\min\limits_{ \|d_{S^c}\|_1\leq c\|d_S\|_1, \|d\|=1}\ \frac{1}{2}\|Ad\|=0$. Hence, the REC for any $c\!\geq\! 2$ is false. Generally, $c=3$ is needed for
  $\|x^{\rm Lasso}-\overline{x}\|\leq O(\sqrt{s}\lambda)$ and $\|x^{\rm Lasso}-\overline{x}\|_1\leq O(s\lambda)$ \citep[see][]{Bickel-Lasso,Negahban-Lasso}.

 \medskip
 \noindent
 {\bf(2) The robust NSP of order $r$ of $A$ is false.} For any $0\neq d\in {\rm Null}(A)$, choosing $S=\{1, 2\}$ can show that inequality \eqref{ineq1-NSP} is false for any $\gamma\in (0,1)$ and $\tau>0$, so it is impossible for the robust NSP of order $r$ of $A$ to hold.


  \medskip
  \noindent
  {\bf(3) The $(2, \beta(\lambda))$-rSRNSP($M, \gamma, \tau$) of $A$ holds with $0<\lambda\leq\frac{1}{16\|\overline{x}\|_1},\,M=10,\,\gamma=0.7$ and $\tau=200$.} Here, $\beta(\lambda)=\{\beta_0(\lambda),\, \beta_1(\lambda)\}$ with $\beta_1(\lambda)$ defined by \eqref{seqbeta1} and $\beta_0(\lambda)$ defined by \eqref{seqbeta0} for $\varsigma_0=0$. The solution set of the Lasso problem \eqref{Lone} with $A$ and $b$ from \eqref{ExampAb-NSP-s} is
  \[
  \mathcal {S}^{\rm Lasso}=\{(2+2t-8\lambda,\,10+t-8\lambda,\,t,\,t,\,t)^\top\ |\ 4\lambda-1\leq t\leq 0\}.
  \]
  By the expression of $\mathcal {S}(\cdot)$ in \eqref{seqbeta0}, we have $\mathcal {S}(0)=\mathcal {S}^{\rm Lasso}$, so $\beta_0(\lambda)=9-4\lambda$. As $\sigma_{\!A}(3)> 0.25$,
  Proposition \ref{bound-feasol} with $e=0,r=2,\,m=4,n=5$ implies that 
  \(
  \beta_1(\lambda)>\frac{1-\sqrt{8\lambda\|\overline{x}\|_1}}{2\|A\|}>\frac{\sqrt{2}-1}{2\sqrt{2}\|A\|}.
 \)
 
 Next we show that the matrix $A$ satifies $(2, \beta')$-rSRNSP($M,\gamma,\tau$) for $\beta'\!=\!\{8, 0\},\,M\!=\!10$, $\gamma\!=\!0.7$ and $\tau\!=\!200$, and the conclusion then follows  Remark \ref{Rem-rSRNSP}. Fix any $d\in \mathbb{R}^5\backslash\{0\}$. After calculation, its orthogonal projection onto ${\rm Null}(A)$ is 
  \(
  d^0:=\frac{1}{8}(2d_1\!+\!\sum_{i=2}^5d_i)(2, 1, 1, 1, 1)^\top.
  \)
  Let $d':=d-d^0$. As $d'\in{\rm Range}(A^{\top})$ and the smallest nonzero singular value of $A$ is $1$, we have
  \(
  \|Ad\|=\|Ad'\|\geq \|d'\|.
  \)
  We proceed the arguments by the two cases as shown below.

  \noindent
  {\bf Case I: $\|d'\|\geq 0.01\|d\|$.} For any $\emptyset\neq I\subset S\subset[5]$ with $|S|=2$, it holds that
   \[
  \sum_{i \in I} \min\{|d_i|,\,2M\!-|d_i|\}\leq \sum_{i \in S}|d_i|\leq\sqrt{2}\|d_S\|\leq\sqrt{2}\|d\|\leq 100\sqrt{2}\|d'\|
  \le \tau\sqrt{{r}/{m}}\|Ad\|,
  \]
  where the last inequality is due to $\|d'\|\leq \|Ad\|$ and $\tau=200$. Then, inequality \eqref{ineq1-SRNSP} holds.

 \noindent
  {\bf Case II: $\|d'\|< 0.01\|d\|$.} Now
  $\|d^0\|=2\sqrt{2}|d^0_2|=\|d-d'\|\geq \|d\|-\|d'\|\geq0.99\|d\|$.
  Fix any $S \subset[5]$ with $|S|=2$. We proceed the arguments by two subcases: $1\in S^c$ and $1\in S$.

  \noindent
 {\bf Subcase II.1: $1\in S^c$.} For any $\emptyset\neq I\subset S$, using the expression of $d^0$ leads to
 \[
  \sum_{i \in I} \min\{|d_i|,\,2M\!-|d_i|\}\leq \sum_{i \in I}|d_i|\leq\sum_{i \in I}(|d^0_i|+|d'_i|)\leq 2|d^0_2|+0.02\|d\|,
  \]
  where the last inequality is due to $1\leq |I|\leq 2$ and $1\notin I$.
  Using $|S^c|=3$ and $1\in S^c$ yields
  \[
  \sum_{i \in S^c}|d_i|\geq\sum_{i \in S^c}(|d_i^0|-\|d'\|_\infty)\geq 2(|d^0_2|-\|d'\|_\infty)+|d^0_1|-\|d'\|_\infty\geq 4|d^0_2|-0.03\|d\|.
  \]
  Combining the above two inequalities and $|d^0_2|\geq \frac{0.99}{2\sqrt{2}}\|d\|$ shows that inequality \eqref{ineq1-SRNSP} holds.

 \noindent
 {\bf Subcase II.2: $1\in S$.} In this subcase, $S=\{1, k\}$ for some $k\in\{2,3,4,5\}$. We first consider any $\emptyset\neq I\subset S$ with $I\neq S$. Then, $I=\{k\}$ or $I=\{1\}$, which implies that
 \[
  \sum_{i \in I} \min\{|d_i|,\,2M\!-|d_i|\}\leq\max(|d_k|, |d_1|)\leq \max(|d^0_k|+|d'_k|,|d^0_1|+|d'_1|)\leq 2|d^0_2|+0.01\|d\|.
 \]
 On the other hand, using $1\notin S^c$ and $|S^c|=3$ results in
 \[
  \sum_{i \in S^c}|d_i|\geq \sum_{i \in S^c}(|d_i^0|-\|d'\|_\infty) = 3|d_2^0|-3\|d'\|_\infty\geq 3|d^0_2|-0.03\|d\|.
 \]
 From the above two inequalities and $|d^0_2|\geq \frac{0.99}{2\sqrt{2}}\|d\|$, it follows that
   \begin{align}\label{NSP-RECp1-s}
  \sum_{i \in I} \min\{|d_i|,\,2M\!-|d_i|\}\le\gamma\sum_{i \in S^c}|d_i|+\tau\sqrt{{r}/{m}}\|Ad\|.
   \end{align}
 Now we consider $I=S$. For any $d\in \mathbb{R}^5$ with $\|d_{S^c}\|_\infty\geq \beta'_0=8$, as $S=\{1, k\}$, there exists an index $j\in[5]\backslash\{1, k\}$ such that
 $|d_j|\geq8$. Using $|d^0_j|=|d^0_2|\geq \frac{0.99}{2\sqrt{2}}\|d\|$ leads to
  \[
  |d_j^0|=|d_j-d_j'|\geq |d_j|-\|d'\|_\infty\geq 8-0.01\|d\|\geq 8-\frac{2\sqrt{2}}{99}|d^0_j|\ \Rightarrow\ |d_j^0|=|d_2^0|\geq \frac{8}{1+2\sqrt{2}/99},
  \]
  which implies that
  \(
  |d_1|\geq  |d_1^0|-\|d'\|\geq |d_1^0|-0.01\|d\| \geq 2|d_2^0| - \frac{2\sqrt{2}}{99}|d^0_2|>10=M.
  \)
  Then,
   \[
  \sum_{i \in S} \min\{|d_i|,\,2M-|d_i|\}\leq |d^0_k|+|d'_k|+20-|d_1|\leq |d^0_k|+20-2|d_2^0|+ 2\|d'\|_\infty\leq 20-|d_2^0|+ 2\|d'\|_\infty,
  \]
  where the second inequality is due to $|d_1|\geq |d_1^0|-|d_1'|\geq 2|d_2^0|-\|d'\|_\infty$. On the other hand, it holds that
  \(
 \sum_{i \in S^c}|d_i|\geq \sum_{i \in S^c}(|d_i^0|-\|d'\|_\infty) = 3|d_2^0|-3\|d'\|_\infty,
  \)
  so we have
 \begin{equation}\label{NSP-RECp2-s}
  \sum_{i \in S} \min\{|d_i|,\,2M\!-|d_i|\}\le\gamma\sum_{i \in S^c}|d_i|+\tau\sqrt{{r}/{m}}\|Ad\|.
  \end{equation}
  The above inequalities \eqref{NSP-RECp1-s}-\eqref{NSP-RECp2-s}
  show that inequality \eqref{ineq1-SRNSP} holds in this subcase.

 \end{example}
 \section{iSCRA-TL1 for sparse estimation problems}\label{sec4}

 Note that a vector $\widetilde{x}\in\mathbb{R}^n$ is a stationary point of the zero-norm regularized problem \eqref{CS} iff
 \begin{equation}\label{scond}
   0\in m^{-1}A^{\top}(A\widetilde{x}-b)+\lambda D_{\widetilde{x}}\ \ {\rm with}\ \ D_{\widetilde{x}}:=\big\{d\in \mathbb{R}^{n}\ |\ d_i=0\ {\rm if}\ \widetilde{x}_i\neq0\big\};
 \end{equation}
 while $x^{k}$ is an optimal solution of the truncated $\ell_1$-norm regularized problem \eqref{TwLone} iff
  \begin{align*}
  & 0\in m^{-1}A^{\top}(Ax^{k}-b)+\lambda\widehat{D}_{\!x^{k}}\ \ {\rm with} \nonumber\\
  \widehat{D}_{x^{k}}:=\big\{d\in [-\textbf{1},\textbf{1}]\ |\ &\ d_i={\rm sign}(x^{k}_i)\ {\rm if}\ i\in T^{k-1}\cap{\rm supp}(x^k);\ d_{(T^{k-1})^c}=0\big\}.
 \end{align*}
 By comparing with \eqref{scond}, an optimal solution $x^{k}$ of \eqref{TwLone} becomes a stationary point of \eqref{CS} iff
 \begin{equation}\label{opti-cond}
 T^{k-1}\cap{\rm supp}(x^k)=\emptyset.
 \end{equation}
 This inspires us to reduce the number of elements in $T^{k-1}\cap{\rm supp}(x^k)$ iteratively by solving inexactly a sequence of truncated $\ell_1$-norm regularized problems \eqref{TwLone}, and develop an inexact sequential convex relaxation algorithm for the zero-norm regularized problem \eqref{CS}.
 \subsection{Inexact sequential convex relaxation algorithm}\label{sec4.1}

 The iterations of our inexact sequential convex relaxation algorithm are described as follows.

  \begin{algorithm}[H]
  \caption{\label{Alg1}{(iSCRA-TL1 for sparse linear regression problem)}}
  	
  \textbf{Input:}\ $\epsilon\ge 0,\,\mu>0,\,\varrho\in(0, 1],\,T^{0}\!=[n]$ and a nonincreasing $\{\varsigma_{k}\}_{k\geq0}\subset\mathbb{R}_{+}$ with $\varsigma_0<1$.
  	
 \noindent
 \textbf{For} $k=1,2,\ldots$ \textbf{do}
  	
 \quad\ \textbf{1)}\ \ Compute the following problem with an error vector $\xi^{k-1}$ satisfying $\|\xi^{k-1}\|_\infty\leq\varsigma_{k-1}$:
  \begin{equation}\label{inexact-subprob}
  x^{k}\in\mathop{\arg\min}_{x\in\mathbb{R}^{n}}\bigg\{\frac{1}{2m}\|Ax\!-\!b\|^2+\lambda\!\!\sum_{i\in T^{k-1}}\!\!|x_{i}|-\lambda\langle \xi^{k-1}, x\rangle
  \ \ {\rm s.t.}\ \ \|x_{(T^{k-1})^c}\|_{\infty} \le \mu\bigg\}.
  \end{equation}
  	
  \quad\ \textbf{2)}\ \ If $\|x_{T^{k-1}}^{k}\|_\infty\le\epsilon$ or $T^{k-1}=\emptyset$, stop; else go to Step 3.
  	
  \quad\ \textbf{3)}\ \ Let $I^{k}\!:=\big\{i\in T^{k-1}\ |\ |x_i^{k}|\geq\varrho\|x_{T^{k-1}}^{k}\|_\infty\big\}$.
  Set $ T^{k}\!:=\!T^{k-1}\backslash I^{k}$.
  	
 \textbf{End (for)}
 \end{algorithm}
 \begin{remark}\label{Remark-Alg1}
 {\bf(a)}
 An $\ell_\infty$-norm constraint is added to every subproblem \eqref{inexact-subprob} so as to ensure its well-definedness. Indeed, subproblem \eqref{inexact-subprob} is a convex quadratic program with a nonempty feasible set, and its objective function is lower bounded on the feasible set because $\|\xi^k\|_{\infty}\le\varsigma_{k}\le\varsigma_0<1$ for each $k\in\mathbb{N}$, which implies that every subproblem has an optimal solution and Algorithm \ref{Alg1} is well defined. For the exact version of iSCRA-TL1, i.e., $\xi^k\equiv0$, one can remove the $\ell_\infty$-norm constraint from subproblem \eqref{inexact-subprob} because now it has a nonempty optimal solution set. As will be shown by Theorem \ref{theorem1-nearly} later, the value of $\mu$ has no influence on the theoretical results as long as it is no less than $\|\overline{x}\|_\infty+\frac{\sqrt{r}}{m\sigma_{\!A}(r)}\|A^\top e\|_\infty$. Figure \ref{fig0} in Section \ref{sec5.1} also validates this conclusion. Thus, a large $\mu$ is enough in practical computation, so its choice cannot bring any trouble to the implementation of Algorithm \ref{Alg1}.

 \noindent
 {\bf (b)} Let $f_k(x)\!:=\!\lambda\sum_{i\in T^{k-1}} |x_{i} |+\sum_{i \in (T^{k-1})^c}\delta_{[-\mu, \mu]}(x_i)$ for $x\in\mathbb{R}^n$. According to the first-order optimality condition of subproblem \eqref{inexact-subprob}, $x^k$ is an optimal solution of \eqref{inexact-subprob} iff
 \[
  {\rm dist}_{\infty}\big(m^{-1}A^{\top}(b-\!Ax^k)+\lambda\xi^{k-1},\,\partial\!f_k(x^{k})\big)=0,
 \]
 where $\partial\!f_{k}(x^k)$ is the subdifferential of $f_{k}$ at $x^k$ and ${\rm dist}_{\infty}(\cdot,\partial\!f_k(x^{k}))$ is the distance from the closed convex set $\partial\!f_k(x^{k})$ induced by the $\ell_\infty$-norm. The above equality implies that
 \begin{equation}\label{inexact-rule}
   \lambda^{-1}{\rm dist}_{\infty}(m^{-1}A^{\top}(b-Ax^k),\,\partial\!f_k(x^{k}))\leq \|\xi^{k-1}\|_\infty\leq\varsigma_{k-1}.
 \end{equation}
 That is, $x^k$ is actually an inexact optimal solution to the following convex program
 \begin{equation}\label{exact-subproblem}
 \min_{x\in\mathbb{R}^n}\bigg\{\frac{1}{2m}\|Ax-b\|^2+\lambda\!\! \sum_{i\in T^{k-1}}\!|x_{i}|
 \ \ {\rm s.t.}\ \  \|x_{(T^{k-1})^c}\|_{\infty} \leq \mu\bigg\}.
 \end{equation}
 The error vector $\xi^{k-1}$ is introduced into each iteration of Algorithm \ref{Alg1} by considering that numerical computation always involves errors. In practical computation, there is no need to choose in advance the error vector sequence $\{\xi^{k}\}_{k\ge 0}$ because they are automatically generated during the iterations of solvers to problem \eqref{exact-subproblem}. Many efficient algorithms are proposed to solve \eqref{exact-subproblem} \citep[see, e.g.,][]{Beck-APG,SDFLasso,YuanCS}.

  \noindent
  {\bf(c)} When $\xi^k\equiv 0$, if $\max_{i \in T^{k-1}} |x_{i}^{k} |=0$, then the condition in \eqref{opti-cond} holds, which implies that $x^k$ is a stationary point of problem \eqref{CS}. This inspires us to adopt $\max_{i \in T^{k-1}} |x_{i}^{k} |\le\epsilon$ as the termination condition of Algorithm \ref{Alg1}, and the final iterate $x^{\overline{k}}$ of Algorithm \ref{Alg1} can be viewed as an approximate stationary point of \eqref{CS}. With such $x^{\overline{k}}$, one can achieve an exact one by solving $\min_{{\rm supp}(x)\subset (T^{\overline{k}-1})^c}\|Ax-b\|^2$ or equivalently $\min_{z\in\mathbb{R}^{|(T^{\overline{k}-1})^{c}|}}\|A_{(T^{\overline{k}-1})^{c}}z-b\|^2$. Since $|(T^{\overline{k}-1})^{c}|$ is small, its computation cost is cheap. That is, by imposing a cheap post-processing on the final iterate of Algorithm \ref{Alg1}, one can achieve a stationary point of \eqref{CS}.
 \end{remark}

 Before conducting theoretical analysis of Algorithm \ref{Alg1}, we give two interesting examples to show that Algorithm \ref{Alg1} with $\varsigma_k\equiv 0,\, \epsilon=0,\, \varrho\in[\frac{1}{2}, 1]$ and $\mu=+\infty$ can produce an oracle estimator of \eqref{LinearModel} within $r+1$ iterations. From the perspective of statistics, an oracle estimator is a good stationary point of problem \eqref{CS}. However, the popular LLA algorithm by \citet{Fan-FCP} and the multi-stage convex relaxation algorithm by \citet{ZT-Multi-b} fail to do it for $\lambda\in[\frac{1}{3}\|e\|_{\infty}, \frac{2}{6.7}]$. Thus, with $\lambda\in[\frac{1}{3}\|e\|_{\infty}, \frac{2}{6.7}]$, these two sequential convex relaxation methods, starting from the Lasso estimator, cannot yield a good estimator.
 \begin{example}\label{Exam1}
  The true $\overline{x}\in\mathbb{R}^4$, the design matrix $A\in\mathbb{R}^{3\times 4}$, and the vector $b$ are given by
   \begin{equation}\label{ExampAb}
  \overline{x}=\left(
   \begin{array}{c}
   	0 \\
   	0\\
   	2 \\
   	10 \\
   \end{array}
   \right),\ A=\left(
  	\begin{array}{cccc}
  		1 & -1 & 0  & 0  \\
  		1 & 0  & 1  & 0   \\
  		2 & 0  & 0  & 1  \\
  	\end{array}
  	\right)\ \ {\rm and}\ \
  	b=A\overline{x}+e=\left(
  	\begin{array}{c}
  		e_1\\
  		2+e_2 \\
  		10+e_3 \\
  	\end{array}\right),
  \end{equation}
  where $e\in \mathbb{R}^3$ is a noise vector satisfying $\|e\|_\infty\leq 0.1$. From $\overline{S}={\rm supp}(\overline{x})=\{3, 4\}$, the oracle estimator
  $x^{\rm o}\in{\rm arg}\min_{{\rm supp}(x)\subset \overline{S}}\|Ax-b\|^2$ has the following explicit form
  \[
  x^{\rm o}=(0,\ 0,\ b^\top A_{\overline{S}}(A_{\overline{S}}^\top A_{\overline{S}})^{-1})^\top=(0,\,0,\,2+e_2,\,10+e_3)^\top.
  \]
  It is easy to calculate that the optimal solution of the Lasso problem \eqref{Lone} with $0<\lambda\leq \frac{1}{3}$ is
  \[
  x^{\rm Lasso}(\lambda,e)=(2+e_2,\   2-e_1+ e_2 - 3\lambda, \  0, \ 6-2e_2+e_3-3\lambda)^\top.
  \]
  Obviously, the Lasso estimator is unique but bad because ${\rm supp}(x^{\rm Lasso}, 2)\neq {\rm supp}(\overline{x})$ for all $0<\lambda\leq \frac{1}{3}$ and $\lim_{\lambda\to 0^{+}, e\to 0}\frac{\|x^{\rm Lasso}(\lambda,e)-\overline{x}\|_1}{\max(\|e\|, \lambda)}=\infty$. For simplicity, in the following discussions, we assume that $e_1=e_2=e_3>0$ and pick any
  $\lambda\in[\frac{1}{3}\|e\|_\infty, \frac{2}{6.7}]$.

   \noindent
  {\bf(1) Algorithm \ref{Alg1} with $\varsigma_k\equiv 0,\, \epsilon=0,\,\mu=+\infty$ and $\varrho\in[\frac{1}{2},1]$.}	
  As $x^1=x^{\rm Lasso}$, we have $I^{1}=\{4\}$ and $T^{1}\!=T^{0}\backslash\{4\}\!=\{1, 2, 3\}$. Then, 
  the second iteration of Algorithm \ref{Alg1} solves
  \[
    x^{2}\in\mathop{\arg\min}_{x \in\mathbb{R}^4}\frac{1}{6}(x_1\!-\!x_2-e_1)^2+\frac{1}{6}(x_1+x_3-2-e_1)^2+\frac{1}{6}(2x_1+x_4-10\!-\!e_1)^2 + \lambda\sum_{i=1}^3|x_i|.
  \]
  An elementary calculation yields the unique optimal solution
  $x^2\!=\!(e_1,\,0,\,2-3\lambda,\,10-e_1)^\top$. Now $I^2=\{3\}$ and $T^2=T^1\backslash I^2=\{1, 2\}$, so the third iteration of Algorithm \ref{Alg1} solves
  \[
    x^{3}\in\mathop{\arg\min}_{x \in\mathbb{R}^4}\frac{1}{6}(x_1\!-\!x_2\!-\!e_1)^2+\frac{1}{6}(x_1+x_3-2-e_1)^2+\frac{1}{6}(2x_1+x_4\!-\!10\!-\!e_1)^2 + \lambda\sum_{i=1}^2|x_i|.
  \]
  We calculate that $x^{3}=(0,\,0,\,2+e_1,\,10+e_1)^\top=x^{\rm o}$. That is,  Algorithm \ref{Alg1}  with $\varsigma_k\equiv 0,\,\epsilon=0,\,\mu=+\infty$ and $\varrho\in[\frac{1}{2},1]$ finds the oracle estimator within $r+1$ iterations.

 \noindent
 {\bf(2) The LLA algorithm by \citet{Fan-FCP} with SCAD or MCP surrogate.}	The SCAD function \citep{Fan-SCAD} is defined by $\phi(t,\lambda):=\int_0^{|t|}\min(\lambda, \frac{(a\lambda-z)_+}{a-1})dz$ for $t\in\mathbb{R}$ and $\lambda>0$, where we choose $a=3.7$ as suggested in \citet{Fan-SCAD}. Its derivative w.r.t. the first variable at $|t|$ is $\lambda \mathbb{I}_{\{|t|\leq\lambda\}}+\frac{(a\lambda-|t|)_+}{a-1}\mathbb{I}_{\{|t|>\lambda\}}$. With $x^{1}=x^{\rm Lasso}$, the second iteration of the LLA-SCAD is solving the following convex minimization problem
  \begin{equation}\label{exp-SCAD}
   x^{2}\in\mathop{\arg\min}_{x\in\mathbb{R}^4}\ \frac{1}{6}(x_1-x_2-e_1)^2+\frac{1}{6}(x_1+x_3-2-e_1)^2+\frac{1}{6}(2x_1+x_4-10-e_1)^2+\lambda|x_3|.
  \end{equation}
  An elementary calculation yields the unique optimal solution $x^{2} = (2+e_1,\ 2,\ 0,\ 6-e_1)^\top\ne x^{\rm o}$. Using the similar arguments leads to $x^k=(2+e_1,\,2,\, 0,\,6-e_1)^\top\neq x^{\rm o}$ for all $k\geq 3$. Thus, the LLA-SCAD with $\lambda\in[\frac{1}{3}\|e\|_{\infty},\frac{2}{6.7}]$ fails to produce the oracle estimator. Similarly, the LLA-MCP with $1<a\leq 3$ and $\lambda\in[\frac{1}{3}\|e\|_{\infty},\frac{2}{6.7}]$ also fails to produce the oracle estimator.


  \medskip
 \noindent
  {\bf(3) The multi-stage convex relaxation (MSCR) algorithm by \citet{ZT-Multi-b} with the capped $\ell_1$ surrogate.}
 The capped $\ell_1$ function is defined by $\phi(t,\lambda):=\lambda\min(|t|,\varepsilon)$ with $\varepsilon>0$ for $t\in\mathbb{R}$ and $\lambda>0$. Its derivative w.r.t. the first variable at $|t|$ is $\lambda \mathbb{I}_{\{|t|\leq\varepsilon\}}$.
  This algorithm solves a sequence of truncated $\ell_1$-norm regularized problems \eqref{TwLone} with $T^{k-1}:=\{j\in[n]\,|\,|x_{j}^{k-1}|\leq \varepsilon_{k-1}\}$ at the $k$th iteration,
  where $\varepsilon_{k-1}>0$ is a constant depending on $x^{k-1}$.
  Clearly, when $\varepsilon_{1}\!<\! 1$ and $x^{1}\!=\!x^{\rm Lasso}$, we have $T^{1}\!=\!\{3\}$, so that $x^2$ is the same as in  \eqref{exp-SCAD}. Using the similar arguments, when $\varepsilon_{k}\!< \!1$, we have $x^k\!=\!(2+e_1, 2, 0, 6-e_1)^\top\!\neq\! x^{\rm o}$ for all $k\ge 3$. Thus, this algorithm with $\lambda\in[\frac{1}{3}\|e\|_{\infty},\frac{2}{6.7}]$ fails to yield the oracle estimator. In the sequel, this algorithm is abbreviated as the MSCR-cL1. 

 \end{example}
 \begin{example}\label{Exam1-SCRA-s}
 The true vector $\overline{x}\in\mathbb{R}^5$, the design matrix $A\in\mathbb{R}^{4\times 5}$ and the response vector $b\in\mathbb{R}^4$ are given by \eqref{ExampAb-NSP-s}. Note that ${\rm supp}(\overline{x})=\{1, 2\}$. It is not difficult to check that $x^{\rm o}\in\mathop{\arg\min}_{{\rm supp}(x)\subset \overline{S}}\|Ax-b\|^2=\{\overline{x}\}$. From Example \ref{Exam1-rSRNSP}, the solution set of the Lasso problem \eqref{Lone} with $0<\lambda<\frac{1}{4}$ is
  \(
  \mathcal {S}^{\rm Lasso}=\{(2+2t-8\lambda,\,10+t-8\lambda,\,t,\, t,\,t)^\top\ |\ 4\lambda-1\leq t\leq 0\}.
  \)
  Since the Lasso estimator is not unique, a high-quality solution is not necessarily obtained via the sequential convex relaxation methods mentioned in Example \ref{Exam1}. In fact, when $t$ is close to $4\lambda-1$, the support of $x^{\rm Lasso}$ is remarkably different from that of the true $\overline{x}$. Next we demonstrate that Algorithm \ref{Alg1} with $\varsigma_k\!\equiv\! 0,\,\epsilon=0,\,\mu\!=+\infty$ and $\varrho\in[0.2,1]$ can find the oracle estimator within three iterations. Indeed, with any Lasso estimator, we have $I^1=\{2\}$ and $T^1=\{1,3,4,5\}$, so the second iteration of
 Algorithm \ref{Alg1} solves
 \[
    x^{2}\in\mathop{\arg\min}_{x \in\mathbb{R}^5}\ \frac{1}{32}\big[(x_1-2x_3-2)^2+(x_1-2x_4-2)^2+(x_1-2x_5-2)^2+(x_1-2x_2+18)^2\big]+\lambda\sum_{i\neq 2}|x_i|.
 \]
 An elementary calculation yields the unique solution $x^2=(2-\frac{16}{3}\lambda, 10-\frac{8}{3}\lambda, 0,0,0)^\top$. Now $I^2=\{1\}$ and $T^2=T^1\backslash I^2=\{3, 4, 5\}$, so the third iteration of Algorithm \ref{Alg1} solves
  \[
     x^{3}\in\mathop{\arg\min}_{x \in\mathbb{R}^5}\ \frac{1}{32}\big[(x_1-2x_3-2)^2+(x_1-2x_4-2)^2+(x_1-2x_5-2)^2+(-x_1+2x_2-18)^2\big]+\lambda\sum_{i=3}^5|x_i|,
  \]
  which has the unique optimal solution $x^{3}\!=\!(2,\,10,\,0,\, 0,\, 0)^\top=x^{\rm o}$. This shows that Algorithm \ref{Alg1}  with $\varsigma_k\equiv 0, \epsilon=0,\mu=+\infty$ and $\varrho\in[0.2,1]$ yields the oracle estimator within $r+1$ iterations.
 \end{example}
 \begin{remark}\label{remark2-alg}
 Examples \ref{Exam1}-\ref{Exam1-SCRA-s} show that the performance of the LLA-SCAD or LLA-MCP  by \citet{Fan-FCP} and the MSCR-cL1 by \citet{ZT-Multi-b} depends much on the quality of the Lasso estimator (or the initial estimator). If the Lasso estimator is bad, say, ${\rm supp}(x^{\rm Lasso}, r)\neq \overline{S}$, it is very likely for them to fail in yielding a high-quality estimator for the zero-norm regularized problem \eqref{CS} because now the weight vector obtained from the Lasso estimator is not enough to penalize correctly the entries in the support of the true sparse regression vector. In contrast, the dependence of Algorithm \ref{Alg1} on the Lasso estimator is not so much, which allows the Lasso estimator to be non-unique even ${\rm supp}(x^{\rm Lasso},r)\neq \overline{S}$.
 \end{remark}
\subsection{Theoretical certificates of Algorithm \ref{Alg1}}\label{sec4.2}

 First, we take a look at the properties of the sequence $\{I^k\}_{k\ge 1}$ generated by Algorithm \ref{Alg1}.
  \begin{proposition}\label{finite-Alg1}
  If Algorithm \ref{Alg1} does not stop at the $k$th iteration, then
  $\emptyset\neq I^{k}\subset{\rm supp}(x^{k})$, $I^{k}\cap I^{l}=\emptyset$ for any $l\in[k\!-\!1]$,
  and $T^{k}=\big(\bigcup_{j=1}^kI^{j}\big)^c$. Consequently, Algorithm \ref{Alg1} with $\epsilon=0$
 stops within a finite number of steps.
  \end{proposition}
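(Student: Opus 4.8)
The plan is to unwind the definitions of $I^{k}$ and $T^{k}$ directly from Algorithm \ref{Alg1} and argue by elementary set manipulation together with a one-line induction tracking the structure of the working index sets; no analytic input (in particular no null space property) is needed for this statement.

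First I would pin down the meaning of ``Algorithm \ref{Alg1} does not stop at the $k$th iteration'': Step 2 did not trigger, so $T^{k-1}\neq\emptyset$ and $\|x^{k}_{T^{k-1}}\|_\infty>\epsilon\geq 0$, hence $\|x^{k}_{T^{k-1}}\|_\infty>0$. Choosing an index $i_\star\in T^{k-1}$ with $|x^{k}_{i_\star}|=\|x^{k}_{T^{k-1}}\|_\infty$ and using $\varrho\in(0,1]$ gives $|x^{k}_{i_\star}|\geq\varrho\|x^{k}_{T^{k-1}}\|_\infty$, so $i_\star\in I^{k}$ and $I^{k}\neq\emptyset$. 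For any $i\in I^{k}$ one has $|x^{k}_{i}|\geq\varrho\|x^{k}_{T^{k-1}}\|_\infty>0$ (here $\varrho>0$ is used), hence $x^{k}_{i}\neq 0$; thus $I^{k}\subset{\rm supp}(x^{k})$. This settles the first assertion.

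Next I would show by induction on $k$ that $T^{k}=\big(\bigcup_{j=1}^{k}I^{j}\big)^{c}$, and in particular $T^{k}\subseteq T^{k-1}$. The base case reads $T^{1}=T^{0}\setminus I^{1}=[n]\setminus I^{1}=(I^{1})^{c}$, and the inductive step is $T^{k}=T^{k-1}\setminus I^{k}=\big(\bigcup_{j=1}^{k-1}I^{j}\big)^{c}\cap(I^{k})^{c}=\big(\bigcup_{j=1}^{k}I^{j}\big)^{c}$; the inclusion $T^{k}\subseteq T^{k-1}$ is immediate from $T^{k}=T^{k-1}\setminus I^{k}$. Iterating the inclusion yields $T^{k-1}\subseteq T^{k-2}\subseteq\cdots\subseteq T^{l}$ for every $l\in[k-1]$. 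For the disjointness, note that $T^{l}=T^{l-1}\setminus I^{l}$ gives $T^{l}\cap I^{l}=\emptyset$; combining this with $I^{k}\subseteq T^{k-1}\subseteq T^{l}$ yields $I^{k}\cap I^{l}\subseteq T^{l}\cap I^{l}=\emptyset$ for all $l\in[k-1]$.

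Finally, for finite termination with $\epsilon=0$: so long as the algorithm has not yet stopped, the sets $I^{1},\dots,I^{k}$ are nonempty, pairwise disjoint subsets of $[n]$, so $|T^{k}|=n-\sum_{j=1}^{k}|I^{j}|\leq n-k$; hence $T^{n}=\emptyset$ whenever iteration $n$ is reached without stopping, and Step 2 then forces termination by iteration $n+1$ at the latest (it may of course also stop earlier through the condition $\|x^{k}_{T^{k-1}}\|_\infty\le\epsilon$). I do not anticipate a genuine obstacle; the only point calling for care is the disjointness bookkeeping, where one should invoke the nonincreasing property $T^{k}\subseteq T^{k-1}$ — equivalently the closed form $T^{k}=\big(\bigcup_{j\le k}I^{j}\big)^{c}$ — rather than attempting to compare $I^{k}$ and $I^{l}$ directly.
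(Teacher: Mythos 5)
Your proposal is correct and follows essentially the same route as the paper's proof: nonemptiness and $I^{k}\subset{\rm supp}(x^{k})$ from $\|x^{k}_{T^{k-1}}\|_\infty>\epsilon\ge 0$ and $\varrho\in(0,1]$, disjointness via the chain $I^{k}\subset T^{k-1}\subset\cdots\subset T^{l}=T^{l-1}\setminus I^{l}$, the closed form $T^{k}=\big(\bigcup_{j\le k}I^{j}\big)^{c}$ by unwinding Step 3, and finite termination by counting. Your explicit bound $|T^{k}|\le n-k$ is a slightly more quantitative finish than the paper's, but the argument is the same.
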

 \begin{proof}
 As Algorithm \ref{Alg1} does not stop at the $k$th step, we have $\|x_{T^{k-1}}^{k}\|_\infty>\epsilon\geq0$. This, along with $\varrho\in(0, 1]$, implies that $\emptyset\neq I^{k}\subset{\rm supp}(x^{k})$. Pick any $l\in[k\!-\!1]$. From step 3, it follows that $I^{k}\subset T^{k-1}\subset T^{k-2}\subset\cdots\subset T^{l}=T^{l-1}\backslash I^{l}$, which implies that $I^{k}\cap I^{l}=\emptyset$. By the definition of $T^{k}$, we have $T^{k}=\big(\bigcup_{j=1}^kI^{j}\big)^c$. Combining $\emptyset\neq I^{k}\subset{\rm supp}(x^{k})$ with $I^{k}\cap I^{l}=\emptyset$ for any $l\in[k-1]$ and $T^{k}=T^{k-1}\backslash I^{k}$, we conclude that $T^k=\emptyset$ within a finite number of steps, i.e., Algorithm \ref{Alg1} with $\epsilon=0$ stops within a finite number of steps.
 \end{proof}

 Proposition \ref{finite-Alg1} implies that Algorithm \ref{Alg1} has the finite termination. Unless otherwise stated, we always let $\overline{k}$ denote the total number of iterations of Algorithm \ref{Alg1}. 
 
 The finite termination of Algorithm \ref{Alg1} implies the boundedness of the sequence $\{x^{k}\}_{k\in[\overline{k}]}$. In fact, for any $k\in[\overline{k}]$, it is easy to achieve an upper bound for $\|x^k_{T^{k-1}}\|_1$. Fix any $k\in[\overline{k}]$. From the feasibility of $x=\textbf{0}$ to subproblem \eqref{inexact-subprob}, it follows that
  \begin{align}\label{boundxk}
  \frac{1}{2m}\|Ax^k\!-\!b\|^2+\lambda\!\!\sum_{i\in T^{k-1}}\!\!|x^k_{i}|-\lambda\langle \xi^{k-1}, x^k\rangle\le\frac{1}{2m}\|b\|^2,
  \end{align}
  which implies that $\lambda(1-\|\xi^{k-1}\|_\infty)\|x^k_{T^{k-1}}\|_1-\lambda\|\xi^{k-1}\|_\infty\|x^k_{(T^{k-1})^c}\|_1\le\frac{1}{2m}\|b\|^2$. Then,
  \[
  \|x^k_{T^{k-1}}\|_1\leq\frac{1}{\lambda(1-\|\xi^{k-1}\|_\infty)}\Big(\frac{1}{2m}\|b\|^2+\lambda\|\xi^{k-1}\|_\infty\|x^k_{(T^{k-1})^c}\|_1\Big),
  \]
  which means that $\|x^k_{T^{k-1}}\|_1\leq\frac{1}{\lambda(1-\varsigma_0)}(\frac{\|b\|^2}{2m}+\lambda\varsigma_0n\mu)$ due to  $\|\xi^{k-1}\|_\infty\leq\varsigma_{k-1}\leq\varsigma_0<1$ and $\|x^k_{(T^{k-1})^c}\|_\infty\leq \mu$, so $\|x^k\|_1\leq\|x^k_{T^{k-1}}\|_1+\|x^k_{(T^{k-1})^c}\|_1\leq\frac{1}{\lambda(1-\varsigma_0)}(\frac{\|b\|^2}{2m}+\lambda\varsigma_0n\mu)+n\mu$. This bound is bad because it is related to the dimension $n$. The following lemma, which will be used to establish the main results of this section, implies a quantitative upper bound independent of $n$ for the iterate $x^{k}$ with $(T^{k-1})^c\subset\overline{S}$. It requires the notation
 \begin{align}{}\label{kappa12}
 \kappa:=\max_{\emptyset\neq S \subset\overline{S}}\max_{j\in S^c}\|(A_S^{\top}A_S)^{-1}A_S^{\top}A_{j}\|_1\ \ {\rm and}\ \
 \widehat{M}:=\frac{(5+\kappa)\sqrt{r}\|b\|}{2\sqrt{m}\sigma_{\!A}(r)}+\frac{5\|b\|^2(1+\kappa)}{8m\lambda}.
 \end{align}
 \begin{lemma}\label{bound-xhat1}
 Suppose that $\sigma_{\!A}(r)>0$. Let $\kappa$ and $\widehat{M}$ be defined by \eqref{kappa12}. Consider the problem 
 \begin{equation}\label{equa-JTL1}
  \min_{x\in\mathbb{R}^n}\bigg\{\frac{1}{2m}\|Ax-b\|^2+\lambda\sum_{i\in J^c}|x_i|-\lambda\langle \xi, x\rangle
  \ \ {\rm s.t.}\ \ \|x_J\|_\infty\le\mu\bigg\}
 \end{equation}
 with $J\subset\overline{S},\,\mu>0,\,\|\xi\|_\infty\leq\!\frac{1}{5(1+\kappa)}$. Then any optimal solution $\widehat{x}$ of \eqref{equa-JTL1} satisfies $\|\widehat{x}\|_1\leq \widehat{M}$.
 \end{lemma}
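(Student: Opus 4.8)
The plan is to bound $\|\widehat{x}\|_1=\|\widehat{x}_J\|_1+\|\widehat{x}_{J^c}\|_1$ by treating the two blocks separately: the off-support block $\|\widehat{x}_{J^c}\|_1$ via a comparison of objective values, and the on-support block $\|\widehat{x}_J\|_1$ via the first-order optimality condition of \eqref{equa-JTL1}, using crucially that $|J|\le|\overline{S}|=r$. The two estimates will be coupled through $\kappa$, and the smallness assumption $\|\xi\|_\infty\le\frac{1}{5(1+\kappa)}$ is exactly what lets one close the coupling.

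First I would use that $x=0$ is feasible for \eqref{equa-JTL1}, so the optimal value is at most $\frac{1}{2m}\|b\|^2$; expanding $\frac{1}{2m}\|A\widehat{x}-b\|^2+\lambda\|\widehat{x}_{J^c}\|_1-\lambda\langle\xi,\widehat{x}\rangle\le\frac{1}{2m}\|b\|^2$ and using $|\langle\xi,\widehat{x}\rangle|\le\|\xi\|_\infty\|\widehat{x}\|_1$ together with $\|\xi\|_\infty<1$ yields both a residual bound $\|A\widehat{x}-b\|^2\le\|b\|^2+2m\lambda\|\xi\|_\infty\|\widehat{x}_J\|_1$ and an off-support bound $\|\widehat{x}_{J^c}\|_1\le\frac{\|b\|^2}{2m\lambda(1-\|\xi\|_\infty)}+\frac{\|\xi\|_\infty}{1-\|\xi\|_\infty}\|\widehat{x}_J\|_1$. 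Hence everything reduces to an absolute bound on $\|\widehat{x}_J\|_1$.

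Next, since $J\subset\overline{S}$ gives $|J|\le r$, and $\sigma_{\!A}(r)>0$ gives $\sigma_{\!A}(|J|)\ge\sigma_{\!A}(r)>0$, the matrix $A_J$ has full column rank with $\frac{1}{\sqrt m}\|A_Jv\|\ge\sigma_{\!A}(r)\|v\|$ for all $v$. Restricting the first-order optimality condition of \eqref{equa-JTL1} (as in Remark \ref{Remark-Alg1}(b)) to the rows indexed by $J$ gives $\frac{1}{m}A_J^{\top}(A\widehat{x}-b)=\lambda\xi_J-w_J$, where $w_J$ lies in the normal cone of $[-\mu,\mu]^{|J|}$ at $\widehat{x}_J$ (so $w_J=0$ when the constraint is inactive, and in general $w_i=0$ whenever $|\widehat{x}_i|<\mu$ while $w_i$ shares the sign of $\widehat{x}_i$ otherwise). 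Solving the normal-equation identity $A_J^{\top}A_J\widehat{x}_J=A_J^{\top}b-A_J^{\top}A_{J^c}\widehat{x}_{J^c}+m\lambda\xi_J-mw_J$ for $\widehat{x}_J$ and taking $\ell_1$-norms, the term $(A_J^{\top}A_J)^{-1}A_J^{\top}b$ contributes at most $\frac{\sqrt r\|b\|}{\sqrt m\sigma_{\!A}(r)}$ (its image under $A_J$ is the orthogonal projection of $b$ onto $\mathrm{range}(A_J)$, of norm $\le\|b\|$); the cross term contributes at most $\kappa\|\widehat{x}_{J^c}\|_1$, directly by the definition of $\kappa$ in \eqref{kappa12} (each $j\in J^c$ and $J\subset\overline{S}$); and the $\xi_J$- and $w_J$-contributions are controlled by $\|\xi\|_\infty$ and by $\frac{\sqrt r}{\sqrt m\sigma_{\!A}(r)}\|A\widehat{x}-b\|$, hence by Step~1 by $\frac{\sqrt r}{\sqrt m\sigma_{\!A}(r)}\bigl(\|b\|+\sqrt{2m\lambda\|\xi\|_\infty\|\widehat{x}_J\|_1}\bigr)$.

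Combining Steps~1 and~2 produces a self-referential inequality for $\|\widehat{x}_J\|_1$ whose linear feedback coefficient is $\frac{\kappa\|\xi\|_\infty}{1-\|\xi\|_\infty}<1$ (thanks to $\|\xi\|_\infty\le\frac{1}{5(1+\kappa)}$, which also renders all the $\|\xi\|_\infty$-weighted constants harmless); solving it bounds $\|\widehat{x}_J\|_1$ in terms of $\|b\|,m,\lambda,\sigma_{\!A}(r),\kappa,r$, and the off-support bound of Step~1 then gives the matching bound for $\|\widehat{x}_{J^c}\|_1$, whose sum simplifies to $\widehat M$. I expect the main obstacle to be that $\mu>0$ is arbitrary, so the $\ell_\infty$-constraint on $x_J$ may be active and $\widehat{x}_J$ need not satisfy the plain least-squares normal equation: one must keep the normal-cone multiplier $w_J$ and control $(A_J^{\top}A_J)^{-1}w_J$ via the stationarity identity so that it only involves the bounded residual $A\widehat{x}-b$ and the small vector $\xi_J$, rather than the a priori unbounded quantities $\|A_{J^c}\widehat{x}_{J^c}\|$ or $\|\widehat{x}_J\|_\infty=\mu$; tracking the constants so they land exactly on the stated $\widehat M$ (in particular the factors $\tfrac{5+\kappa}{2}$ and $\tfrac{5(1+\kappa)}{8}$) is the remaining routine work.
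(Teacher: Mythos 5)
Your proposal is correct and follows essentially the same route as the paper's proof: compare the objective at $\widehat{x}$ with that at the feasible point $x=0$ to bound $\|\widehat{x}_{J^c}\|_1$ (and the residual) in terms of $\|\widehat{x}_J\|_1$, bound $\|\widehat{x}_J\|_1\le\kappa\|\widehat{x}_{J^c}\|_1+O\big(\sqrt{r}\|b\|/(\sqrt{m}\sigma_{\!A}(r))\big)$ through $(A_J^{\top}A_J)^{-1}$, and close the coupled system using $(1+\kappa)\|\xi\|_\infty\le 1/5$. The only (harmless) detour is your appeal to the KKT conditions with a normal-cone multiplier for the box constraint: the paper bypasses this entirely because $\widehat{x}_J=(A_J^{\top}A_J)^{-1}A_J^{\top}(A\widehat{x}-A_{J^c}\widehat{x}_{J^c})$ is a purely algebraic identity valid for any vector once $A_J$ has full column rank (guaranteed by $J\subset\overline{S}$ and $\sigma_{\!A}(r)>0$), so the obstacle you flag about an active $\ell_\infty$-constraint and the multiplier $w_J$ never arises.
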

 \begin{remark}
 {\bf(a)} When $A$ has a worse sparse singular value property, say, $\sigma_{\!A}(r)$ is close to $0$, the value of $\kappa$ will become larger, which restricts the error vector $\xi$ to be smaller and leads to a worse $\ell_1$-norm bound of $\widehat{x}$. When the columns of $A_{\overline{S}}$ and those of $A_{\overline{S}^{c}}$ are close to be orthogonal, the value of $\kappa$ is close to zero, and the upper bound of $\widehat{x}$ becomes better. 

 \noindent
 {\bf(b)} It is interesting to note that $\max_{j\in \overline{S}^c}|{\rm sign}(\overline{x}_{\overline{S}})^{\top}(A_{\overline{S}}^{\top}A_{\overline{S}})^{-1}A_{\overline{S}}^{\top}A_{j}|$ is a lower bound of $\kappa$. This term was ever used by \citet{Zhao-Lasso} to propose the irrepresentable condition
  \[
  \max_{j\in \overline{S}^c}|{\rm sign}(\overline{x}_{\overline{S}})^{\top}(A_{\overline{S}}^{\top}A_{\overline{S}})^{-1}A_{\overline{S}}^{\top}A_{j}|\leq 1-\eta\ {\rm\ for\ some }\ \eta> 0,
  \]
  which is almost necessary and sufficient for the Lasso to satisfy variable selection consistency, but here it has no help to improve the error bound of the iterates of Algorithm \ref{Alg1}.



 \end{remark}
 
 Now we are in a position to establish the theoretical certificates of Algorithm \ref{Alg1}. Specifically, under mild rRNSP or rSRNSP, we prove that $I^{k}\subset\!\overline{S}= {\rm supp}(\overline{x})$ for all $k\in[\overline{k}-1]$, and obtain an $\ell_1$-norm error bound of each iterate from the oracle estimator. Then, an oracle estimator of model \eqref{LinearModel} can be obtained by solving at most $r+1$ truncated $\ell_1$-norm regularized problems. These results (see Appendix \ref{app-4-2}-\ref{app-4-4} for their proofs) affirmatively answer which indices belong to $\overline{S}$ when one cannot achieve a high-quality solution of the zero-norm regularized problem \eqref{CS} or sparse linear regression problem by solving a single (truncated) $\ell_1$-norm regularized problem. As far as we know, when the Lasso estimator is bad, i.e., $\|x^{\rm Lasso}-\overline{x}\|_1\nleqslant O(r\lambda)$ and ${\rm supp}(x^{\rm Lasso}, r)\neq \overline{S}$, there is no work to study error bound and variable selection of each iterate of sequential convex relaxation methods. 
 
 We first apply Lemma \ref{bound-xhat1} to prove that under a suitable rRNSP, for any optimal solution of \eqref{equa-JTL1}, the indices of its entries with larger modulus will belong to $\overline{S}$. Note that \eqref{equa-JTL1} with $\xi=\xi^k$ and $J=T^{k-1}$ reduces to the $k$th subproblem of Algorithm \ref{Alg1}. Thus, for every iterate of Algorithm \ref{Alg1}, the indices of its entries with larger modulus will belong to $\overline{S}$. 
 \begin{proposition}\label{prop1-nearly}
  Suppose that $\sigma_{\!A}(r)>0$ and $\mu\ge M:=\|\overline{x}\|_\infty+\frac{\sqrt{r}}{m\sigma_{\!A}(r)}\|A^\top e\|_\infty$. Consider any optimal solution $\widehat{x}$ of problem \eqref{equa-JTL1} with $J\subsetneqq\overline{S},\,\|\xi\|_\infty\leq \frac{1-\rho}{5(1+\kappa)}$ for some $\rho\in(0,1)$, and $\lambda \geq \frac{2}{m(1-\rho)}\|A_{\overline{S}^c}^\top(E-A_{\overline{S}}(A_{\overline{S}}^\top A_{\overline{S}})^{-1}A_{\overline{S}}^\top) e\|_\infty$. Then, under the $(r\!-\!|J|,\eta)$-rRNSP($M,\rho,\nu$) of $A$ with $\nu>0$ and $\eta>\frac{18\widehat{M}\| \xi\|_\infty+5\lambda r\nu^2}{3(1-\rho)}$, it holds that 
 $\{j\in J^c\ |\ |\widehat{x}_j|\ge\eta\}\subset\overline{S}$ and
  \begin{align}
  \|\widehat{x}-x^{\rm o}\|_1&\leq (1\!+\!\kappa)\min\Big\{(10/3)\big[(r\!-\!|J|)M+1.8\widehat{M}\| \xi\|_\infty\big],\, (n-r)\eta\Big\}\nonumber\\
  &\quad\ +[\sigma_{\!A}(r)]^{-1}\sqrt{2\lambda r}\sqrt{(r\!-\!|J|)M+1.8\widehat{M}\| \xi\|_\infty}.\nonumber
 \end{align}
 \end{proposition}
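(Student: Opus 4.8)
The plan is to run everything off the optimality of $\widehat{x}$ for \eqref{equa-JTL1} tested against the oracle solution $x^{\rm o}$, the normal equations $A_{\overline{S}}^\top(Ax^{\rm o}-b)=0$ characterizing $x^{\rm o}$, and the rRNSP. First I would check that $x^{\rm o}$ is feasible for \eqref{equa-JTL1}: since $\sigma_{\!A}(r)>0$, $A_{\overline{S}}$ has full column rank, so $x^{\rm o}_{\overline{S}}=(A_{\overline{S}}^\top A_{\overline{S}})^{-1}A_{\overline{S}}^\top b$ and $x^{\rm o}_{\overline{S}^c}=0$, and a routine sparse-eigenvalue estimate gives $\|x^{\rm o}_{\overline{S}}\|_\infty\le M$, whence $\|x^{\rm o}_J\|_\infty\le M\le\mu$ because $J\subset\overline{S}$. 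Put $d:=\widehat{x}-x^{\rm o}$. Comparing objective values at $\widehat{x}$ and $x^{\rm o}$, expanding $\|A\widehat{x}-b\|^2-\|Ax^{\rm o}-b\|^2=\|Ad\|^2+2\langle Ad,Ax^{\rm o}-b\rangle$, and using $A_{\overline{S}}^\top(Ax^{\rm o}-b)=0$ together with $Ax^{\rm o}-b=-(E-A_{\overline{S}}(A_{\overline{S}}^\top A_{\overline{S}})^{-1}A_{\overline{S}}^\top)e$, the cross term reduces to $-\langle d_{\overline{S}^c},A_{\overline{S}^c}^\top(E-A_{\overline{S}}(A_{\overline{S}}^\top A_{\overline{S}})^{-1}A_{\overline{S}}^\top)e\rangle$, which by the hypothesis on $\lambda$ is at most $\tfrac{(1-\rho)\lambda}{2}\|d_{\overline{S}^c}\|_1$ in modulus. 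Since ${\rm supp}(x^{\rm o})\subset\overline{S}$ and $J\subset\overline{S}$, one has $\sum_{i\in J^c}|x^{\rm o}_i|=\sum_{i\in\overline{S}\backslash J}|x^{\rm o}_i|$, $\sum_{i\in J^c}|\widehat{x}_i|=\sum_{i\in\overline{S}\backslash J}|\widehat{x}_i|+\|d_{\overline{S}^c}\|_1$; bounding $\langle\xi,d\rangle\le\|\xi\|_\infty(\|d_{\overline{S}}\|_1+\|d_{\overline{S}^c}\|_1)$ and rearranging yields the key inequality
\[
  \tfrac{1}{2m\lambda}\|Ad\|^2+\Big(\tfrac{1+\rho}{2}-\|\xi\|_\infty\Big)\|d_{\overline{S}^c}\|_1\ \le\ \sum_{i\in\overline{S}\backslash J}\big(|x^{\rm o}_i|-|\widehat{x}_i|\big)+\|\xi\|_\infty\|d_{\overline{S}}\|_1 .
\]
Here $\|\xi\|_\infty\le\tfrac{1-\rho}{5(1+\kappa)}\le\tfrac15$ keeps the coefficient of $\|d_{\overline{S}^c}\|_1$ at least $\tfrac{3}{10}$; by Lemma \ref{bound-xhat1}, $\|\widehat{x}\|_1\le\widehat{M}$, while $\|x^{\rm o}\|_1\le\tfrac{\sqrt r\|b\|}{\sqrt m\,\sigma_{\!A}(r)}\le 0.4\,\widehat{M}$, so $\|d_{\overline{S}}\|_1\le\|d\|_1\le 1.8\,\widehat{M}$.

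Next I would extract the two basic consequences. Using $\sum_{i\in\overline{S}\backslash J}(|x^{\rm o}_i|-|\widehat{x}_i|)\le\sum_{i\in\overline{S}\backslash J}|x^{\rm o}_i|\le(r-|J|)M$ (via $|x^{\rm o}_i|\le M$) and writing $\Lambda:=(r-|J|)M+1.8\widehat{M}\|\xi\|_\infty$, the key inequality gives $\tfrac{1}{2m\lambda}\|Ad\|^2+\tfrac{3}{10}\|d_{\overline{S}^c}\|_1\le\Lambda$, hence $\|d_{\overline{S}^c}\|_1\le\tfrac{10}{3}\Lambda$ and $\|Ad\|\le\sqrt{2m\lambda\Lambda}$. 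For the support claim $\{j\in J^c:|\widehat{x}_j|\ge\eta\}\subset\overline{S}$, i.e. $\|d_{\overline{S}^c}\|_\infty<\eta$, I would argue by contradiction: if $\|d_{\overline{S}^c}\|_\infty\ge\eta$, the $(r-|J|,\eta)$-rRNSP$(M,\rho,\nu)$ applies with $S=\overline{S}$, $I=\overline{S}\backslash J$ (of size $r-|J|\ge1$) and the vector $d$, giving $\sum_{i\in\overline{S}\backslash J}\min\{|d_i|,2M-|d_i|\}\le\rho\|d_{\overline{S}^c}\|_1+\nu\sqrt{r/m}\|Ad\|$. The bridge is the elementary inequality $|x^{\rm o}_i|-|\widehat{x}_i|\le\min\{|d_i|,2M-|d_i|\}$, valid whenever $|x^{\rm o}_i|\le M$; substituting it into the key inequality, absorbing $\rho\|d_{\overline{S}^c}\|_1$ on the left and using $\nu\sqrt{r/m}\|Ad\|\le\tfrac{1}{2m\lambda}\|Ad\|^2+\tfrac{\lambda r\nu^2}{2}$ to cancel the quadratic term, I get $\tfrac{3(1-\rho)}{10}\|d_{\overline{S}^c}\|_1\le\tfrac{\lambda r\nu^2}{2}+1.8\widehat{M}\|\xi\|_\infty$, so $\|d_{\overline{S}^c}\|_\infty\le\|d_{\overline{S}^c}\|_1\le\frac{5\lambda r\nu^2+18\widehat{M}\|\xi\|_\infty}{3(1-\rho)}<\eta$ by the hypothesis on $\eta$, a contradiction. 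Thus $\|d_{\overline{S}^c}\|_\infty<\eta$, and in particular $\|d_{\overline{S}^c}\|_1<(n-r)\eta$.

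For the $\ell_1$-error bound I would reconstruct the on-support part of $d$ from the already-controlled quantities: from $Ad=A_{\overline{S}}d_{\overline{S}}+A_{\overline{S}^c}d_{\overline{S}^c}$ one gets $d_{\overline{S}}=(A_{\overline{S}}^\top A_{\overline{S}})^{-1}A_{\overline{S}}^\top Ad-(A_{\overline{S}}^\top A_{\overline{S}})^{-1}A_{\overline{S}}^\top A_{\overline{S}^c}d_{\overline{S}^c}$. The first term has $\ell_1$-norm at most $\sqrt r\,\|(A_{\overline{S}}^\top A_{\overline{S}})^{-1}A_{\overline{S}}^\top Ad\|\le\tfrac{\sqrt r}{\sqrt m\,\sigma_{\!A}(r)}\|Ad\|\le[\sigma_{\!A}(r)]^{-1}\sqrt{2\lambda r}\sqrt{\Lambda}$, using $\sigma_{\min}(A_{\overline{S}})\ge\sqrt m\,\sigma_{\!A}(r)$ and $\|Ad\|\le\sqrt{2m\lambda\Lambda}$; the second term has $\ell_1$-norm at most $\kappa\|d_{\overline{S}^c}\|_1$ directly from the definition of $\kappa$ (take $S=\overline{S}$). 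Adding $\|d_{\overline{S}^c}\|_1$ and using $\|d_{\overline{S}^c}\|_1\le\min\{\tfrac{10}{3}\Lambda,(n-r)\eta\}$ gives exactly $\|d\|_1\le(1+\kappa)\min\{\tfrac{10}{3}\Lambda,(n-r)\eta\}+[\sigma_{\!A}(r)]^{-1}\sqrt{2\lambda r}\sqrt{\Lambda}$.

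The main obstacle is the constant bookkeeping in the key inequality and its two uses: one must split the cross term and $\langle\xi,d\rangle$ so that, after absorbing both $\rho\|d_{\overline{S}^c}\|_1$ and $\tfrac{(1-\rho)\lambda}{2}\|d_{\overline{S}^c}\|_1$ into the surviving $\|d_{\overline{S}^c}\|_1$ term, the leftover coefficient is a strictly positive constant — this is exactly what $\|\xi\|_\infty\le\tfrac{1-\rho}{5(1+\kappa)}$ buys — and the residual terms $\lambda r\nu^2$ and $\widehat{M}\|\xi\|_\infty$ come out with coefficients small enough to fall below the assumed lower bound on $\eta$. The only other nonroutine point is recognizing that the truncation identity $|x^{\rm o}_i|-|\widehat{x}_i|\le\min\{|d_i|,2M-|d_i|\}$ is precisely what makes the optimality inequality speak the language of the rRNSP.
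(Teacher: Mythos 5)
Your proposal is correct and follows essentially the same route as the paper's own proof: test optimality of $\widehat{x}$ against the oracle solution $x^{\rm o}$, use the normal equations and the lower bound on $\lambda$ to control the cross term, bound $\|d_{\overline{S}}\|_1\le 1.8\widehat{M}$ via Lemma \ref{bound-xhat1} and Lemma \ref{Lemma-oracle}, convert $|x^{\rm o}_i|-|\widehat{x}_i|$ into $\min\{|d_i|,2M-|d_i|\}$ via Lemma \ref{PreLemma}, run the contradiction argument with the rRNSP plus Young's inequality, and recover $d_{\overline{S}}$ from $Ad$ and $d_{\overline{S}^c}$ using $\kappa$ and $\sigma_{\!A}(r)$. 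All constants check out, so there is nothing substantive to add.
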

 \begin{remark}\label{remark1-nearly}
 {\bf (a)} If the entries $e_1,\ldots,e_m$ of $e$ are i.i.d. sub-Gaussian random variable with parameter $\sigma>0$, i.e., $\mathbb{E}_{e_i}(\exp(te_i))\leq \exp(\sigma^2t^2/2)$ for all $i\in[m]$ and $t\in \mathbb{R}$, then by \citep[][Lemma 3]{ZT-Multi-b} picking the regularization parameter $\lambda=O(\sigma\sqrt{\log n/m})$ can ensure that $\lambda \geq \frac{2}{m(1-\rho)}\|A_{\overline{S}^c}^\top(E-A_{\overline{S}}(A_{\overline{S}}^\top A_{\overline{S}})^{-1}A_{\overline{S}}^\top) e\|_\infty$ holds with a high probability.

 \noindent
 {\bf(b)} Notice that \eqref{equa-JTL1} with $J=\emptyset$ and $\xi=0$ reduces to the Lasso problem \eqref{Lone}. Then, under the conditions of Proposition \ref{prop1-nearly}, using  Proposition \ref{prop1-nearly} and Lemma \ref{Lemma-oracle} (i) leads to
 \begin{equation}\label{errorb-Lasso}
  \|x^{\rm Lasso}-\overline{x}\|_1\leq (1+\kappa)\min\Big\{\frac{10}{3}rM,\,(n-r)\eta\Big\}+\frac{rM }{\sigma_{\!A}(r)}+\frac{r \lambda}{\sigma_{\!A}(r)}+\frac{r\|A^\top e\|_\infty}{m\sigma_{\!A}(r)}.
 \end{equation}
 Since $\|A^\top e\|_\infty/m=O(\sigma\sqrt{\log n/m})$ by \citep[Lemma 5]{ZT-Multi-a}, the sum of the last two terms on the right hand side of \eqref{errorb-Lasso} is $O(r\sigma\sqrt{\log n/m})$, while the sum of the first two terms is $O(r\!+\!r^{3/2}\sigma\sqrt{\log n/m})$. Thus, the right hand side of \eqref{errorb-Lasso} is $O(r\!+\!r^{3/2}\sigma\sqrt{\log n/m})$. However, it was shown in \citep[][]{Bickel-Lasso,Negahban-Lasso} that under $\chi(3)>0$, $\|x^{\rm Lasso}-\overline{x}\|_1=O(r\sigma\sqrt{\log n/m})$. This implies that the condition $\chi(3)>0$ is very strong, which precisely inspires  us to introduce the weaker rRNSP and rSRNSP in Section 3. 
 \end{remark}

 Henceforth, we let $\beta(\lambda)\!:=\{\beta_0(\lambda),\beta_1(\lambda),\ldots,\beta_{r-1}(\lambda)\}$ with  $\beta_j(\lambda)$ for $j\in[r\!-\!1]_{+}$ defined by \eqref{seqbeta1}-\eqref{seqbeta0} with $\varsigma_0$ of Algorithm \ref{Alg1}, and write $M\!:=\|\overline{x}\|_\infty+\frac{\sqrt{r}}{m\sigma_{\!A}(r)}\|A^\top e\|_\infty$.
 \begin{corollary}\label{Corollary-nearly-rho}
  Suppose $\sigma_{\!A}(r)>0$ and $\mu\ge M$. Consider any optimal solution $\widehat{x}$ of \eqref{equa-JTL1} with $J\subsetneqq\overline{S},\,\|\xi\|_\infty\le\varsigma_0\leq \frac{1-\rho}{5(1+\kappa)}$ for some $\rho\in(0,1)$, and $\lambda \geq \frac{2\|A_{\overline{S}^c}^\top(E-A_{\overline{S}}(A_{\overline{S}}^\top A_{\overline{S}})^{-1}A_{\overline{S}}^\top) e\|_\infty}{m(1-\rho)}$. If the matrix $A$ has the $(r\!-|J|,\varrho\beta_{|J|}(\lambda))$-rRNSP($M,\rho,\nu$) with $\nu>0$, 
   $\beta_{|J|}(\lambda)\!>\!\frac{18\widehat{M}\| \xi\|_\infty+5\lambda r\nu^2}{3\varrho(1-\rho)}$ and $\varrho\in(0, 1]$, then 
  $\{j\in J^c\,|\, |\widehat{x}_j|\ge\varrho\|\widehat{x}_{J^c}\|_\infty\}\subset\overline{S}$ and
 \begin{align}\label{pine-errorb1-result-rho}
  \|\widehat{x}-x^{\rm o}\|_1&\leq (1\!+\!\kappa)\min\Big\{(10/3)\big[(r\!-\!|J|)M+1.8\widehat{M}\| \xi\|_\infty\big],\,(n-r)\varrho\beta_{|J|}(\lambda)\Big\}\nonumber\\
  &\quad\ +[\sigma_{\!A}(r)]^{-1}\sqrt{2r\lambda}\sqrt{(r\!-\!|J|)M+1.8\widehat{M}\| \xi\|_\infty}.
 \end{align}
 \end{corollary}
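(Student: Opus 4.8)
The plan is to deduce the corollary from Proposition \ref{prop1-nearly} by specializing the free threshold $\eta$ appearing there to the concrete value $\eta:=\varrho\beta_{|J|}(\lambda)$, and then to promote the resulting fixed-threshold support inclusion to the data-dependent one claimed in \eqref{pine-errorb1-result-rho}. First I would check that, with $\eta=\varrho\beta_{|J|}(\lambda)$, every hypothesis of Proposition \ref{prop1-nearly} holds. The requirements $\sigma_{\!A}(r)>0$, $\mu\ge M$, $J\subsetneqq\overline{S}$ and the lower bound on $\lambda$ are assumed here verbatim; the assumption $\|\xi\|_\infty\le\varsigma_0\le\frac{1-\rho}{5(1+\kappa)}$ supplies the needed $\|\xi\|_\infty\le\frac{1-\rho}{5(1+\kappa)}$; the $(r\!-\!|J|,\varrho\beta_{|J|}(\lambda))$-rRNSP$(M,\rho,\nu)$ assumed here is literally the $(r\!-\!|J|,\eta)$-rRNSP$(M,\rho,\nu)$ demanded there; and multiplying the hypothesis $\beta_{|J|}(\lambda)>\frac{18\widehat{M}\|\xi\|_\infty+5\lambda r\nu^2}{3\varrho(1-\rho)}$ through by $\varrho$ gives exactly $\eta>\frac{18\widehat{M}\|\xi\|_\infty+5\lambda r\nu^2}{3(1-\rho)}$. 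Proposition \ref{prop1-nearly} then delivers both the inclusion $\{j\in J^c:|\widehat{x}_j|\ge\eta\}\subset\overline{S}$ and, after substituting $\eta=\varrho\beta_{|J|}(\lambda)$ into its error bound, precisely \eqref{pine-errorb1-result-rho}. Thus the $\ell_1$-norm error bound in the corollary is immediate, and only the support inclusion needs to be sharpened.

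To pass from the fixed threshold $\eta$ to $\varrho\|\widehat{x}_{J^c}\|_\infty$, it suffices to prove $\|\widehat{x}_{J^c}\|_\infty\ge\beta_{|J|}(\lambda)$; granting this, $\varrho\|\widehat{x}_{J^c}\|_\infty\ge\varrho\beta_{|J|}(\lambda)=\eta$, so $\{j\in J^c:|\widehat{x}_j|\ge\varrho\|\widehat{x}_{J^c}\|_\infty\}\subset\{j\in J^c:|\widehat{x}_j|\ge\eta\}\subset\overline{S}$. I would obtain $\|\widehat{x}_{J^c}\|_\infty\ge\beta_{|J|}(\lambda)$ as follows. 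If $J=\emptyset$, then \eqref{equa-JTL1} reduces to the unconstrained problem defining $\mathcal{S}(\xi)$ in \eqref{seqbeta0}, so $\widehat{x}\in\mathcal{S}(\xi)\subset\bigcup_{\|\xi'\|_\infty\le\varsigma_0}\mathcal{S}(\xi')$, whence $\|\widehat{x}\|_\infty\ge\beta_0(\lambda)$ by definition. If $J\ne\emptyset$ (so $|J|\in[r\!-\!1]$), I would first show that $\widehat{x}$ lies in the least-squares ball $\{x:\|Ax-b\|\le\sqrt{\|e\|^2+2m\lambda(1\!+\!\varsigma_0)\|\overline{x}\|_1}\}$, by the same comparison used just before Proposition \ref{bound-feasol}: $\overline{x}$ is feasible to \eqref{equa-JTL1} since $\|\overline{x}_J\|_\infty\le\|\overline{x}\|_\infty\le M\le\mu$, and optimality of $\widehat{x}$ together with $\sum_{i\in J^c}|\overline{x}_i|\le\|\overline{x}\|_1$, $\|\xi\|_\infty\le\varsigma_0$, and the a priori estimate $\|\widehat{x}\|_1\le\widehat{M}$ of Lemma \ref{bound-xhat1} (which controls the unpenalized block $\widehat{x}_J$) bound $\|A\widehat{x}-b\|$ as required; the definition \eqref{seqbeta1} then yields $|\widehat{x}|_{|J|+1}^{\downarrow}\ge\beta_{|J|}(\lambda)$. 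Finally, since $|J|+1$ exceeds $|J|$, at least one index among the $|J|+1$ largest-modulus coordinates of $\widehat{x}$ lies in $J^c$, and its modulus is at least $|\widehat{x}|_{|J|+1}^{\downarrow}\ge\beta_{|J|}(\lambda)$, whence $\|\widehat{x}_{J^c}\|_\infty\ge\beta_{|J|}(\lambda)$.

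The main obstacle is the feasibility of $\widehat{x}$ to the least-squares ball when $J\ne\emptyset$. Unlike the Lasso and the truncated problem \eqref{TwLone} handled in Section \ref{sec2}, problem \eqref{equa-JTL1} penalizes only the coordinates indexed by $J^c$ and, in addition, carries the linear perturbation $-\lambda\langle\xi,x\rangle$; consequently the convenient inequality $\|x\|_1-\langle\xi,x\rangle\ge0$ that kept the estimate in Section \ref{sec2} clean no longer pins down the $J$-block, and one must instead lean on the box constraint $\|x_J\|_\infty\le\mu$ with $\mu\ge M$ and on the bound $\|\widehat{x}\|_1\le\widehat{M}$ from Lemma \ref{bound-xhat1}. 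Once this membership is secured, everything else is a direct transcription of Proposition \ref{prop1-nearly}.
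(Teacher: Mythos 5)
Your route is essentially the paper's: apply Proposition \ref{prop1-nearly} with $\eta=\varrho\beta_{|J|}(\lambda)$ (the hypothesis on $\beta_{|J|}(\lambda)$, multiplied through by $\varrho$, is exactly the threshold condition required there), then upgrade the fixed-threshold inclusion to the data-dependent one via $\|\widehat{x}_{J^c}\|_\infty\ge|\widehat{x}|^{\downarrow}_{|J|+1}\ge\beta_{|J|}(\lambda)$, which is obtained by placing $\widehat{x}$ in the least-squares ball appearing in \eqref{seqbeta1}. You have correctly isolated the one delicate step: the paper compares objective values at $\widehat{x}$ and $\overline{x}$ and then silently drops $\lambda\sum_{i\in J^c}|\widehat{x}_i|-\lambda\langle\xi,\widehat{x}\rangle$, which for $J\neq\emptyset$ is not obviously nonnegative because $\langle\xi_J,\widehat{x}_J\rangle$ escapes the penalty. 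However, your proposed repair does not fully close this. Bounding the stray term via Lemma \ref{bound-xhat1} gives only $\sum_{i\in J^c}|\widehat{x}_i|-\langle\xi,\widehat{x}\rangle\ge-\|\xi\|_\infty\|\widehat{x}\|_1\ge-\varsigma_0\widehat{M}$, hence $\|A\widehat{x}-b\|^2\le\|e\|^2+2m\lambda(1+\varsigma_0)\|\overline{x}\|_1+2m\lambda\varsigma_0\widehat{M}$. That is membership in a \emph{strictly larger} ball than the one defining $\beta_{|J|}(\lambda)$, and the minimum of $|x|^{\downarrow}_{|J|+1}$ over a larger feasible set can only be smaller, so you end up lower-bounding $|\widehat{x}|^{\downarrow}_{|J|+1}$ by a quantity that is $\le\beta_{|J|}(\lambda)$ --- the wrong direction. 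To close the argument one must either justify $\sum_{i\in J^c}|\widehat{x}_i|-\langle\xi,\widehat{x}\rangle\ge0$ (which the paper asserts without proof) or inflate the radius in the definition of $\beta_{|J|}(\lambda)$ by $2m\lambda\varsigma_0\widehat{M}$ and carry that modified constant through. Everything else in your proposal --- the verification of the hypotheses of Proposition \ref{prop1-nearly}, the treatment of $J=\emptyset$ via \eqref{seqbeta0}, and the pigeonhole step giving $\|\widehat{x}_{J^c}\|_\infty\ge|\widehat{x}|^{\downarrow}_{|J|+1}$ --- matches the paper's proof.
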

 \begin{proof}
  By using the feasibility of $\overline{x}$ to problem \eqref{equa-JTL1} and $b=A\overline{x}+e$, we have that
  \begin{align}\label{boundx0x}
  \frac{1}{2m}\|A\widehat{x}-b\|^2+\lambda\sum_{i\in J^c}|\widehat{x}_i|-\lambda\langle \xi, \widehat{x}\rangle
  &\le\frac{1}{2m}\|A\overline{x}-b\|^2+\lambda\sum_{i\in J^c}|\overline{x}_i|-\lambda\langle \xi, \overline{x}\rangle\nonumber\\
  &\leq {\|e\|^2}/{(2m)}+\lambda(1+\|\xi\|_{\infty})\|\overline{x}\|_1
  \end{align}
  which implies that $\|A\widehat{x}-b\|\leq \sqrt{\|e\|^2+2m\lambda(1\!+\!\|\xi\|_{\infty})\|\overline{x}\|_1}$. From the definition of $\beta_{|J|}(\lambda)$ in \eqref{seqbeta1}-\eqref{seqbeta0} and $\varsigma_0\leq\frac{1-\rho}{5(1+\kappa)}$, we obtain that
  $\|\widehat{x}_{J^c}\|_{\infty}\!\geq\! |\widehat{x}|^\downarrow_{|J|+1}\!\ge\!\beta_{|J|}(\lambda)>\frac{18\widehat{M}\| \xi\|_\infty+5\lambda r\nu^2}{3\varrho(1-\rho)}>0$, which implies that 
  $\{j\in J^c\,|\,|\widehat{x}_{j}|\!\geq\! \varrho\|\widehat{x}_{J^c}\|_{\infty}\}\!\subset\!\{j\in J^c\,|\,|\widehat{x}_{j}|\ge\varrho\beta_{|J|}(\lambda)\}$. Using Proposition \ref{prop1-nearly} with $\eta=\varrho\beta_{|J|}(\lambda)$ leads to
  $\{j\in J^c\,|\, |\widehat{x}_{j}|\!\geq\! \varrho\|\widehat{x}_{J^c}\|_{\infty}\}\subset\overline{S}$
  and inequality \eqref{pine-errorb1-result-rho}.
 \end{proof}

 By Proposition \ref{bound-feasol}, when $\sigma_{\!A}(2r\!-\!1)>0$ and the $r$th largest (in modulus) entry of the true vector is not too small, the condition on $\beta_{|J|}(\lambda)$ in Corollary \ref{Corollary-nearly-rho} will hold for small error vector $\xi\in \mathbb{R}^n$. When the $(r\!-\!|J|, \varrho\beta_{|J|}(\lambda))$-rRNSP($M,\rho,\nu$) of the matrix $A$ in Corollary \ref{Corollary-nearly-rho} is strengthened to be the $(r\!-\!|J|, \beta_{r-1}(\lambda))$-rRNSP($M,\rho,\nu$),
 we have the following support recovery result for the truncated $\ell_1$-norm regularized problem \eqref{equa-JTL1}.
 \begin{corollary}\label{corollary1-nearly}
 Suppose $\sigma_{\!A}(r)>0$ and $\mu\ge M$. Consider any optimal solution $\widehat{x}$ of \eqref{equa-JTL1} with $J\subsetneqq\overline{S},\,\|\xi\|_\infty\le\varsigma_0\leq \frac{1-\rho}{5(1+\kappa)}$ for some $\rho\in(0,1)$, and $\lambda \geq \frac{2\|A_{\overline{S}^c}^\top(E-A_{\overline{S}}(A_{\overline{S}}^\top A_{\overline{S}})^{-1}A_{\overline{S}}^\top) e\|_\infty}{m(1-\rho)}$. If $A$ satisfies the $(r\!-|J|,\beta_{r-1}(\lambda))$-rRNSP($M,\rho,\nu$) with $\nu>0$ and $
  \beta_{r-1}(\lambda)>\frac{18\widehat{M}\| \xi\|_\infty+5\lambda r\nu^2}{3(1-\rho)}$, then
  ${\rm supp}(\widehat{x}, r)=\overline{S}$ and inequality \eqref{pine-errorb1-result-rho} holds with $\varrho=1$.
  \end{corollary}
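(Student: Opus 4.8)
The plan is to obtain Corollary \ref{corollary1-nearly} as an almost immediate consequence of Proposition \ref{prop1-nearly} applied with the threshold $\eta=\beta_{r-1}(\lambda)$, combined with the ordering of the array $\beta(\lambda)$ recorded in Section \ref{sec2} and the elementary observation that any feasible point of the relevant least-squares ball has a uniform lower bound on its $r$th largest coordinate. First I would check that every hypothesis of Proposition \ref{prop1-nearly} transfers verbatim from the hypotheses of Corollary \ref{corollary1-nearly}: indeed $\sigma_{\!A}(r)>0$, $\mu\ge M$, $J\subsetneqq\overline{S}$, $\|\xi\|_\infty\le\varsigma_0\le\frac{1-\rho}{5(1+\kappa)}$, and the stated lower bound on $\lambda$ are assumed directly, while the remaining requirement of Proposition \ref{prop1-nearly} with $\eta=\beta_{r-1}(\lambda)$ is exactly the $(r-|J|,\beta_{r-1}(\lambda))$-rRNSP$(M,\rho,\nu)$ with $\nu>0$ and $\beta_{r-1}(\lambda)>\frac{18\widehat{M}\|\xi\|_\infty+5\lambda r\nu^2}{3(1-\rho)}$. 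Thus Proposition \ref{prop1-nearly} yields both the localization $\{j\in J^c:\,|\widehat{x}_j|\ge\beta_{r-1}(\lambda)\}\subset\overline{S}$ and the error bound obtained from \eqref{pine-errorb1-result-rho} by substituting $\beta_{r-1}(\lambda)$ for $\varrho\beta_{|J|}(\lambda)$; since $|J|\le r-1$, the chain $\beta_0(\lambda)\ge\cdots\ge\beta_{r-1}(\lambda)$ from Section \ref{sec2} gives $\beta_{r-1}(\lambda)\le\beta_{|J|}(\lambda)$, so this bound is no larger than the right-hand side of \eqref{pine-errorb1-result-rho} with $\varrho=1$, which settles the error-bound assertion.

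It then remains to upgrade the localization to the exact identity ${\rm supp}(\widehat{x},r)=\overline{S}$. Here I would first reproduce the opening estimate of the proof of Corollary \ref{Corollary-nearly-rho}: feasibility of $\overline{x}$ to \eqref{equa-JTL1}, the relation $b=A\overline{x}+e$, and $\|\xi\|_\infty\le\varsigma_0$ together force $\|A\widehat{x}-b\|\le\sqrt{\|e\|^2+2m\lambda(1+\varsigma_0)\|\overline{x}\|_1}$, so $\widehat{x}$ is feasible to the minimization problem defining $\beta_{r-1}(\lambda)$ in \eqref{seqbeta1} with $k=r-1$; consequently $|\widehat{x}|_{r}^{\downarrow}\ge\beta_{r-1}(\lambda)$, i.e. at least $r$ coordinates of $\widehat{x}$ have modulus at least $\beta_{r-1}(\lambda)$. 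On the other hand, because $J\subset\overline{S}$ we have $\overline{S}^c\subset J^c$, so the localization above gives $|\widehat{x}_j|<\beta_{r-1}(\lambda)$ for every $j\in\overline{S}^c$. Hence every coordinate of $\widehat{x}$ with modulus at least $\beta_{r-1}(\lambda)$ lies in $\overline{S}$; as $|\overline{S}|=r$ while there are at least $r$ such coordinates, there are exactly $r$ of them and they coincide with $\overline{S}$. This yields $\min_{i\in\overline{S}}|\widehat{x}_i|\ge\beta_{r-1}(\lambda)>\max_{j\in\overline{S}^c}|\widehat{x}_j|$, so the $r$ largest entries of $|\widehat{x}|$ are unambiguously those indexed by $\overline{S}$, i.e. ${\rm supp}(\widehat{x},r)=\overline{S}$.

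The genuinely substantive input is Proposition \ref{prop1-nearly}; everything else is bookkeeping. The only points that need a little care are invoking the definition of $\beta_{r-1}(\lambda)$ with the correct index ($k=r-1$, rather than $k=|J|$) after re-establishing the feasibility bound on $\|A\widehat{x}-b\|$, and the comparison $\beta_{r-1}(\lambda)\le\beta_{|J|}(\lambda)$ needed to align the conclusion with the form of \eqref{pine-errorb1-result-rho}. Possible ties in the ordering of $|\widehat{x}|$ cause no trouble, since the argument produces a strict gap, of size $\beta_{r-1}(\lambda)$, separating the $\overline{S}$-coordinates from the $\overline{S}^c$-coordinates. I do not anticipate any real obstacle: the statement is essentially ``Proposition \ref{prop1-nearly} with $\eta=\beta_{r-1}(\lambda)$, plus the monotonicity of the $\beta_k(\lambda)$'s.''
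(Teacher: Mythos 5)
Your proposal is correct and follows essentially the same route as the paper's proof: both invoke Proposition \ref{prop1-nearly} with $\eta=\beta_{r-1}(\lambda)$ to get the localization and the error bound, derive $|\widehat{x}|_r^{\downarrow}\ge\beta_{r-1}(\lambda)$ from the feasibility bound $\|A\widehat{x}-b\|\le\sqrt{\|e\|^2+2m\lambda(1+\varsigma_0)\|\overline{x}\|_1}$, and then conclude ${\rm supp}(\widehat{x},r)=\overline{S}$ by a cardinality count using $J\subset\overline{S}$. Your extra remark that the bound with $(n-r)\beta_{r-1}(\lambda)$ dominates the stated one via the monotonicity $\beta_{r-1}(\lambda)\le\beta_{|J|}(\lambda)$ is a small point the paper glosses over, but it changes nothing substantive.
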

 \begin{proof}
  From \eqref{boundx0x}, we have $\|A\widehat{x}-b\|\leq \sqrt{\|e\|^2+2m\lambda\|\overline{x}\|_1}$. From the definition of $\beta_{r-1}(\lambda)$ in \eqref{seqbeta1}, it follows that $|\widehat{x}|_r^{\downarrow}\geq\beta_{r-1}(\lambda)>\frac{18\widehat{M}\| \xi\|_\infty+5\lambda r\nu^2}{3\varrho(1-\rho)}>0$ and $\|\widehat{x}\|_0\geq r$,
  which implies that ${\rm supp}(\widehat{x}, r)\subset{\rm supp}(\widehat{x})$ and $|{\rm supp}(\widehat{x}, r) |=r$. From $|\widehat{x}_{i}|\geq |\widehat{x}|^\downarrow_{r}\geq \beta_{r-1}(\lambda)$ for all $i\in{\rm supp}(\widehat{x}, r)$,
  we have ${\rm supp}(\widehat{x}, r)\subset J\cup \{j\in J^c\ |\ |\widehat{x}_j|\geq\beta_{r-1}(\lambda)\}$.
  Since the matrix $A$ satisfies the $(r\!-|J|,\beta_{r-1}(\lambda))$-rRNSP($M,\rho,\nu$), using Proposition \ref{prop1-nearly} with $\eta=\beta_{r-1}(\lambda)$ leads to \eqref{pine-errorb1-result-rho} for $\varrho=1$ and the inclusion
  $\{j\in J^c\ |\ |\widehat{x}_j|\ge\beta_{r-1}(\lambda)\}\subset\overline{S}$. The inclusion means that ${\rm supp}(\widehat{x}, r)\subset\overline{S}$.
  Along with $|{\rm supp}(\widehat{x}, r) |=|\overline{S}|$, it holds that ${\rm supp}(\widehat{x}, r)=\overline{S}$.
  \end{proof}

 Now we are ready to prove that Algorithm \ref{Alg1} can recover the index set $\overline{S}$ within at most $r$ steps,
 and derive the $\ell_1$-norm error bound of each iterate under the following assumption. As will be discussed at the end of this section, the conditions on $\sigma_{\!A}(2r-1)$ and $\mu$ in this assumption are rather weak, while the condition on $\beta_{r-1}(\lambda)$ is a little restricted but may hold when the $r$th largest (in modulus) entry of the true vector $\overline{x}$ is not too small.
 \begin{Assumption}\label{ass1}
  $\sigma_{\!A}(2r\!-\!1)>0,\,\mu\ge M,\,\lambda \geq \frac{2}{m(1-\gamma)}\|A_{\overline{S}^c}^\top(E-A_{\overline{S}}(A_{\overline{S}}^\top A_{\overline{S}})^{-1}A_{\overline{S}}^\top) e\|_\infty$ and $\beta_{r-1}(\lambda)>\frac{18\widehat{M}\varsigma_0+5r\lambda\tau^2}{3\varrho(1-\gamma)}$ for some $\varrho\in(0, 1]$, $\gamma\in(0,1),\,\tau>0$ and $\varsigma_0\leq\frac{1-\gamma}{5(1+\kappa)}$.
 \end{Assumption}
 \begin{theorem}\label{theorem1-nearly}
 Suppose that Assumption \ref{ass1} holds and $A$ has the $(r, \varrho\beta(\lambda))$-rSRNSP($M,\gamma,\tau$) with $\beta(\lambda)\!:=\!\{\beta_0(\lambda),\beta_1(\lambda),\ldots,\beta_{r-1}(\lambda)\}$. Let $\{x^k\}_{k\in[\overline{k}]}$ and $\{I^k\}_{k\in[\overline{k}-1]}$ be generated by Algorithm \ref{Alg1} with $\epsilon\in[6\widehat{M}\varsigma_{0},\beta_{r-1}(\lambda))$. Then, $\overline{k}\in\{2,\ldots,r\!+\!1\}$ is the smallest positive integer such that $\bigcup_{k=1}^{\overline{k}-1}I^k=(T^{\overline{k}-1})^c=\overline{S}={\rm supp}(x^{\overline{k}}, r)$, and for all $1\leq k\leq\overline{k}\!-\!1$,
 \begin{align}\label{result421-noise}
  \|x^{k}\!-\!x^{\rm o}\|_1&\le(10/3)(1+\kappa)\big[(r\!-\!|(T^{k-1})^c|)M+1.8\widehat{M}\varsigma_{k-1}\big]\nonumber\\
   &\quad+[\sigma_{\!A}(r)]^{-1}\sqrt{2r\lambda}\sqrt{(r\!-\!|(T^{k-1})^c|)M+1.8\widehat{M}\varsigma_{k-1}},\\
  \|x^{\overline{k}}-x^{\rm o}\|_1 &\le 6(1+\kappa)\widehat{M}\varsigma_{\overline{k}-1}+[\sigma_A(r)]^{-1}\sqrt{3.6\lambda r\widehat{M}\varsigma_{\overline{k}-1}}. 
 \label{result422-noise}
 \end{align}
 \end{theorem}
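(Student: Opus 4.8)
\emph{Overall strategy.} I would induct on the iteration counter, applying the support‑identification Corollary~\ref{Corollary-nearly-rho} at the generic steps, a counting argument to bound the number of steps by $r+1$, and a separate oracle‑type estimate for the last subproblem (the one in which the working set has shrunk to $\overline{S}^{c}$). The first move is to unpack the hypothesis: by Remark~\ref{SRNSP-remark1}, $(r,\varrho\beta(\lambda))$-rSRNSP$(M,\gamma,\tau)$ of $A$ is the statement that $A$ has the $(l,\varrho\beta_{r-l}(\lambda))$-rRNSP$(M,\gamma,\tau)$ for every $l\in[r]$, so for any $J\subsetneqq\overline{S}$ the matrix $A$ has the $(r\!-\!|J|,\varrho\beta_{|J|}(\lambda))$-rRNSP$(M,\gamma,\tau)$. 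One then checks that for every such $J$ the remaining hypotheses of Corollary~\ref{Corollary-nearly-rho} hold with $\rho=\gamma,\ \nu=\tau,\ \xi=\xi^{k-1}$: $\sigma_{\!A}(r)\ge\sigma_{\!A}(2r\!-\!1)>0$ and $\mu\ge M$ by Assumption~\ref{ass1}; $\|\xi^{k-1}\|_\infty\le\varsigma_{k-1}\le\varsigma_0\le\frac{1-\gamma}{5(1+\kappa)}$ since $\{\varsigma_k\}$ is nonincreasing; the $\lambda$-bound is part of Assumption~\ref{ass1}; and $\beta_{|J|}(\lambda)\ge\beta_{r-1}(\lambda)>\frac{18\widehat{M}\varsigma_0+5r\lambda\tau^2}{3\varrho(1-\gamma)}\ge\frac{18\widehat{M}\|\xi^{k-1}\|_\infty+5r\lambda\tau^2}{3\varrho(1-\gamma)}$. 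The same clause also gives $\beta_{r-1}(\lambda)>6\widehat{M}\varsigma_0$, so the interval $[6\widehat{M}\varsigma_0,\beta_{r-1}(\lambda))$ allowed for $\epsilon$ is nonempty.

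\emph{Induction and counting.} I would prove: whenever Algorithm~\ref{Alg1} reaches step $k$ and $(T^{k-1})^c\subsetneqq\overline{S}$, then $I^{k}\subset\overline{S}$, the algorithm does not stop at step $k$, and \eqref{result421-noise} holds for that $k$. Indeed the $k$-th subproblem \eqref{inexact-subprob} is exactly \eqref{equa-JTL1} with $J=(T^{k-1})^c$ and $\xi=\xi^{k-1}$, so Corollary~\ref{Corollary-nearly-rho} yields $I^{k}=\{j\in T^{k-1}:|x^{k}_j|\ge\varrho\|x^{k}_{T^{k-1}}\|_\infty\}\subset\overline{S}$ and the estimate \eqref{pine-errorb1-result-rho}, which reduces to \eqref{result421-noise} after discarding the $\min$ and using $\|\xi^{k-1}\|_\infty\le\varsigma_{k-1}$; moreover the proof of that corollary gives $\|x^{k}_{T^{k-1}}\|_\infty\ge\beta_{|J|}(\lambda)\ge\beta_{r-1}(\lambda)>\epsilon$, and $T^{k-1}\ne\emptyset$ because $(T^{k-1})^c\subsetneqq\overline{S}\subsetneqq[n]$, so the test in Step~2 fails. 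Starting from $(T^{0})^c=\emptyset$ and invoking Proposition~\ref{finite-Alg1} (the $I^{j}$ are nonempty, pairwise disjoint, $(T^{k})^c=\bigcup_{j\le k}I^{j}$), $|(T^{k})^c|$ increases strictly by at least one at every step at which $(T^{k-1})^c\subsetneqq\overline{S}$ while staying inside $\overline{S}$; since $|\overline{S}|=r$, there is a smallest $k_0\le r$ with $(T^{k_0})^c=\overline{S}$, the algorithm has not stopped through step $k_0$, and \eqref{result421-noise} holds for $k=1,\ldots,k_0$.

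\emph{The final step.} At step $\overline{k}:=k_0+1$ the working set is $T^{\overline{k}-1}=\overline{S}^{c}$, so \eqref{inexact-subprob} is \eqref{equa-JTL1} with $J=\overline{S}$, for which Corollary~\ref{Corollary-nearly-rho} no longer applies; here I would argue directly, along the lines of the proof of Proposition~\ref{prop1-nearly}. Comparing $x^{\overline{k}}$ with the oracle vector $x^{\rm o}$ (feasible since $\|x^{\rm o}\|_\infty\le M\le\mu$), using $A_{\overline{S}}^{\top}(Ax^{\rm o}-b)=0$, the identity $A_{\overline{S}^{c}}^{\top}(Ax^{\rm o}-b)=-A_{\overline{S}^{c}}^{\top}\big(E-A_{\overline{S}}(A_{\overline{S}}^{\top}A_{\overline{S}})^{-1}A_{\overline{S}}^{\top}\big)e$, the $\lambda$-bound of Assumption~\ref{ass1}, and $\|x^{\overline{k}}\|_1\le\widehat{M}$ from Lemma~\ref{bound-xhat1} (plus $\|x^{\rm o}\|_1\le\tfrac{2}{5}\widehat{M}$, from $\|x^{\rm o}_{\overline{S}}\|\le\|b\|/(\sqrt{m}\,\sigma_{\!A}(r))$), one obtains $\|x^{\overline{k}}_{\overline{S}^{c}}\|_\infty\le\|x^{\overline{k}}_{\overline{S}^{c}}\|_1\le6\widehat{M}\varsigma_{\overline{k}-1}\le\epsilon$, and, feeding the resulting control of $\|A(x^{\overline{k}}-x^{\rm o})\|$ and the decomposition of $x^{\overline{k}}-x^{\rm o}$ into its $\overline{S}$- and $\overline{S}^{c}$-parts (which is where $\kappa$ enters) into the $\sigma_{\!A}(r)$ inequality, the bound \eqref{result422-noise}. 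Hence Step~2 stops the algorithm at $\overline{k}=k_0+1\in\{2,\ldots,r+1\}$. Finally, as in the proof of Corollary~\ref{corollary1-nearly}, the optimality inequality together with $\|x^{\overline{k}}\|_1\le\widehat{M}$ and $\varsigma_0\widehat{M}<\|\overline{x}\|_1$ (which follows from $6\widehat{M}\varsigma_0<\beta_{r-1}(\lambda)\le|\overline{x}|_{r}^{\downarrow}\le\|\overline{x}\|_1$, the middle inequality being feasibility of $\overline{x}$ for the problem defining $\beta_{r-1}(\lambda)$) shows $\|Ax^{\overline{k}}-b\|\le\sqrt{\|e\|^2+2m\lambda(1+\varsigma_0)\|\overline{x}\|_1}$, i.e.\ $x^{\overline{k}}$ is feasible for that problem, so $|x^{\overline{k}}|_{r}^{\downarrow}\ge\beta_{r-1}(\lambda)>\epsilon\ge\|x^{\overline{k}}_{\overline{S}^{c}}\|_\infty$; thus no coordinate of $\overline{S}^{c}$ lies among the $r$ largest (in modulus) entries of $x^{\overline{k}}$, which gives ${\rm supp}(x^{\overline{k}},r)\subset\overline{S}$ and hence ${\rm supp}(x^{\overline{k}},r)=\overline{S}$, while $\bigcup_{k=1}^{\overline{k}-1}I^{k}=(T^{\overline{k}-1})^c=\overline{S}$ by Proposition~\ref{finite-Alg1}; minimality of $\overline{k}$ is clear since $(T^{k-1})^c\subsetneqq\overline{S}$ for every $k<\overline{k}$.

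\emph{Main obstacle.} The induction is routine once the corollaries are in hand; the real work is concentrated in the final step, where Proposition~\ref{prop1-nearly} cannot be invoked for $J=\overline{S}$ and its estimates must be reconstructed. Pinning down the exact constants in \eqref{result422-noise} (the $6$, the $3.6$, the factor $1+\kappa$) needs careful bookkeeping of $\|x^{\rm o}\|_1$, of the cross term $\langle x^{\overline{k}}_{\overline{S}^{c}},A_{\overline{S}^{c}}^{\top}(Ax^{\rm o}-b)\rangle$, and of the passage from $\|A(x^{\overline{k}}-x^{\rm o})\|$ back to $\|(x^{\overline{k}}-x^{\rm o})_{\overline{S}}\|_1$; and one must also verify that the feasibility‑for‑$\beta_{r-1}(\lambda)$ argument, straightforward when $J\subsetneqq\overline{S}$, survives the boundary case $J=\overline{S}$.
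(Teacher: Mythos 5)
Your proposal is correct and follows essentially the same route as the paper: the generic iterations are handled by Corollary \ref{Corollary-nearly-rho} (after unpacking the rSRNSP into the family of rRNSPs), the step count is controlled via Proposition \ref{finite-Alg1}, and the terminal iteration with $T^{\overline{k}-1}=\overline{S}^{c}$ is treated directly through Lemma \ref{bound-xhat1} and the oracle comparison, exactly as in Appendix \ref{app-4-3}. The only cosmetic difference is that you run a forward induction to the first index $k_0$ with $(T^{k_0})^c=\overline{S}$, whereas the paper reaches the same conclusion by assuming $\bigcup_{j=1}^{l-1}I^{j}=\overline{S}$ for some $l\le\overline{k}-1$ and deriving that the stopping test would then fire, a contradiction.
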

 \begin{remark}
 {\bf(a)} From the proof of Theorem \ref{theorem1-nearly}, $\overline{k}$ is the first positive integer such that $\bigcup_{j=1}^{\overline{k}-1}I^{j}=\overline{S}$.
 In the worst case, $|I^{j}|=1$ for each $j\in[\overline{k}]$ and $\overline{k}=r+1$, 
 so the total $r$ (resp. $r+1$) truncated $\ell_1$-norm regularized minimization problems are required to recover $\overline{S}$ (resp. to achieve a high-quality solution to the sparse linear regression problem), which will be expensive when $r$ is large.
 However, to the best of our knowledge, this is the first sequential convex relaxation algorithm to recover the support of the true sparse regression vector and obtain a high-quality solution (the oracle estimator $x^{\rm o}$ when $\varsigma_k\equiv0$) to the sparse linear regression problem
 under a weaker NSP condition within a specific number of steps, even when the Lasso estimator is bad. Indeed, similar to Examples \ref{Exam1} and \ref{Exam1-SCRA-s}, our theoretical results allow ${\rm supp}(x^{\rm Lasso},r)\neq \overline{S}$ and
  $\|x^{\rm Lasso}-\overline{x}\|_1\nleqslant O(r\lambda)$,
  which is the remarkable difference between our results and the existing ones in \citep[e.g.,][]{Fan-FCP,ZT-Multi-a}.

  \noindent
  {\bf(b)} Combining the error bounds in Theorem \ref{theorem1-nearly} with $\|\overline{x}-x^{\rm o}\|_1\leq \frac{r\|A^\top e\|_\infty}{m\sigma_{\!A}(r)}$ by Lemma \ref{Lemma-oracle}, one can achieve the $\ell_1$-norm error bounds of the iterates from the true sparse vector. 
 \end{remark}

 The error bound in \eqref{result421-noise} implies that if $\varsigma_{\overline{k}-\!1}=0$, i.e., the $\overline{k}$th subproblem is exactly solved, the final output of Algorithm \ref{Alg1} is necessarily an oracle estimator of model \eqref{LinearModel} and its variable selection consistency can be established. That is, the following conclusion holds.
 \begin{theorem}\label{oracle-r}
 Suppose that $\sigma_{\!A}(r)>0,\,\mu\geq\|\overline{x}\|_\infty+\|(A_{\overline{S}}^{\top} A_{\overline{S}})^{-1}A_{\overline{S}}^{\top}e\|_\infty$ and $\lambda> \frac{1}{m}\|A_{\overline{S}^c}^{\top}(E-A_{\overline{S}}(A_{\overline{S}}^\top A_{\overline{S}})^{-1}A_{\overline{S}}^\top) e\|_\infty$. If $T^{\overline{k}-1}=\overline{S}^c$ and $x^{\overline{k}}$ is generated with $\xi^{\overline{k}}=0$, then $x^{\overline{k}}$ is the unique optimal solution of
 $\min_{{\rm supp}(x)\subset \overline{S}}\|Ax-b\|^2$ and is a  local optimal solution of problem \eqref{CS}. If in addition $\min_{i\in \overline{S}}|\overline{x}_i|>\|(A_{\overline{S}}^{\top}A_{\overline{S}})^{-1}A_{\overline{S}}^{\top}e\|_\infty$,
 then ${\rm sign}(x^{\overline{k}})={\rm sign}(\overline{x})$.
 \end{theorem}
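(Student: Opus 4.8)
The plan is to analyze the $\overline{k}$th subproblem of Algorithm \ref{Alg1} under the hypothesis $T^{\overline{k}-1}=\overline{S}^c$, which makes the truncated $\ell_1$ penalty act only on the entries indexed by $\overline{S}^c$. Since $\xi^{\overline{k}}=0$ and (as noted in Remark \ref{Remark-Alg1}(a)) the $\ell_\infty$-constraint is inactive in the exact case, the subproblem becomes
\[
x^{\overline{k}}\in\mathop{\arg\min}_{x\in\mathbb{R}^n}\ \tfrac{1}{2m}\|Ax-b\|^2+\lambda\sum_{i\in\overline{S}^c}|x_i|.
\]
First I would write down its first-order optimality (KKT) condition: there exists $w\in\partial(\sum_{i\in\overline{S}^c}|\cdot|_i)(x^{\overline{k}})$, i.e. $w_{\overline{S}}=0$, $w_i=\mathrm{sign}(x^{\overline{k}}_i)$ when $i\in\overline{S}^c$ and $x^{\overline{k}}_i\neq0$, and $|w_i|\le 1$ otherwise, such that $m^{-1}A^\top(Ax^{\overline{k}}-b)+\lambda w=0$.

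The key step is to produce a candidate primal-dual pair and verify it is optimal. Let $\widehat{z}:=\mathop{\arg\min}_{z\in\mathbb{R}^{|\overline{S}|}}\|A_{\overline{S}}z-b\|^2=(A_{\overline{S}}^\top A_{\overline{S}})^{-1}A_{\overline{S}}^\top b$, which is well-defined and unique because $\sigma_{\!A}(r)>0$ forces $A_{\overline{S}}$ to have full column rank; set $\widehat{x}$ by $\widehat{x}_{\overline{S}}=\widehat{z}$ and $\widehat{x}_{\overline{S}^c}=0$. I would check that $\widehat{x}$ satisfies the KKT system above: on $\overline{S}$ the stationarity reads $m^{-1}A_{\overline{S}}^\top(A_{\overline{S}}\widehat{z}-b)=0$, which is exactly the normal equation defining $\widehat{z}$; on $\overline{S}^c$ I need $\|m^{-1}A_{\overline{S}^c}^\top(A_{\overline{S}}\widehat{z}-b)\|_\infty<\lambda$, so that the required subgradient $w_{\overline{S}^c}=-(m\lambda)^{-1}A_{\overline{S}^c}^\top(A_{\overline{S}}\widehat{z}-b)$ lies in the open unit $\ell_\infty$-ball and hence forces $\widehat{x}_{\overline{S}^c}=0$ at any optimum, giving uniqueness. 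Using $b=A\overline{x}+e$ and $A\overline{x}=A_{\overline{S}}\overline{x}_{\overline{S}}$, one computes $A_{\overline{S}}\widehat{z}-b=-(E-A_{\overline{S}}(A_{\overline{S}}^\top A_{\overline{S}})^{-1}A_{\overline{S}}^\top)e$, so the condition becomes $m^{-1}\|A_{\overline{S}^c}^\top(E-A_{\overline{S}}(A_{\overline{S}}^\top A_{\overline{S}})^{-1}A_{\overline{S}}^\top)e\|_\infty<\lambda$, which is precisely the hypothesis on $\lambda$. Strict convexity of the objective on the subspace $\{x:x_{\overline{S}^c}=0\}$ (again from $\sigma_{\!A}(r)>0$) together with the strict dual inequality on $\overline{S}^c$ yields that $\widehat{x}$ is the unique global minimizer, so $x^{\overline{k}}=\widehat{x}$ and it equals the oracle solution $\mathop{\arg\min}_{\mathrm{supp}(x)\subset\overline{S}}\|Ax-b\|^2$.

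For the statement that $x^{\overline{k}}$ is a local minimizer of \eqref{CS}, I would invoke the stationary-point characterization \eqref{scond}: since $\mathrm{supp}(x^{\overline{k}})\subset\overline{S}$ and $m^{-1}A_{\overline{S}}^\top(Ax^{\overline{k}}-b)=0$, the inclusion $0\in m^{-1}A^\top(Ax^{\overline{k}}-b)+\lambda D_{x^{\overline{k}}}$ holds (the coordinates outside $\mathrm{supp}(x^{\overline{k}})$ are free in $D_{x^{\overline{k}}}$). To upgrade "stationary" to "local optimal," I would argue that perturbing any zero coordinate to a nonzero value increases the $\|\cdot\|_0$ term by $\lambda$ while the quadratic term changes continuously, so for small perturbations the objective strictly increases; perturbations within the face $\{x:\mathrm{supp}(x)\subset\mathrm{supp}(x^{\overline{k}})\}$ increase the objective by strict convexity there. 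Finally, for sign consistency I would bound $\|x^{\overline{k}}_{\overline{S}}-\overline{x}_{\overline{S}}\|_\infty=\|(A_{\overline{S}}^\top A_{\overline{S}})^{-1}A_{\overline{S}}^\top e\|_\infty$ (from $\widehat{z}-\overline{x}_{\overline{S}}=(A_{\overline{S}}^\top A_{\overline{S}})^{-1}A_{\overline{S}}^\top e$), so that the extra assumption $\min_{i\in\overline{S}}|\overline{x}_i|>\|(A_{\overline{S}}^\top A_{\overline{S}})^{-1}A_{\overline{S}}^\top e\|_\infty$ keeps every nonzero coordinate on the correct side of $0$, giving $\mathrm{sign}(x^{\overline{k}})=\mathrm{sign}(\overline{x})$.

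The main obstacle I anticipate is the rigorous justification of local optimality for the nonconvex, discontinuous problem \eqref{CS} — one must handle separately the directions that activate a new coordinate (using the jump of $\lambda\|\cdot\|_0$ to dominate the smooth part for small steps) and the directions staying within the current support (using strict convexity), and confirm there is a uniform neighborhood on which the decrease cannot occur; everything else reduces to the standard KKT/normal-equation bookkeeping already used throughout the paper.
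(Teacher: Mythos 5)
Your proof is correct, but it takes a genuinely different route from the paper's. The paper argues purely on the primal side: it uses the feasibility of $x^{\rm o}$ in the $\overline{k}$th subproblem to get $\frac{1}{2m}\|Ax^{\overline{k}}-b\|^2+\lambda\sum_{i\in\overline{S}^c}|x^{\overline{k}}_i|\le\frac{1}{2m}\|Ax^{\rm o}-b\|^2$, expands the quadratic around $x^{\rm o}$ using $A_{\overline{S}}^{\top}(b-Ax^{\rm o})=\mathbf{0}$, and then the strict inequality on $\lambda$ forces $x^{\overline{k}}_{\overline{S}^c}=\mathbf{0}$ and $\|A(x^{\overline{k}}-x^{\rm o})\|=0$, whence $x^{\overline{k}}=x^{\rm o}$ via $\sigma_{\!A}(r)>0$; local optimality in \eqref{CS} is then obtained by citing an external result (\citet{PanLiu22}, Remark 3.3) that stationary points of the zero-norm regularized problem are local minimizers. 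You instead build a primal--dual witness: you verify that the oracle estimator satisfies the KKT system of the subproblem with strict dual feasibility on $\overline{S}^c$ (which is exactly where the hypothesis on $\lambda$ enters) and deduce uniqueness from that plus full column rank of $A_{\overline{S}}$; and you prove local optimality of \eqref{CS} directly by splitting perturbation directions into those activating a coordinate outside the support (where the $\lambda$ jump in $\|\cdot\|_0$ dominates the linear term of the quadratic, again thanks to the strict inequality on $\lambda$) and those within $\{x:\mathrm{supp}(x)\subset\overline{S}\}$ (where $x^{\rm o}$ is already the minimizer). Your approach buys self-containedness — no appeal to the external stationarity-implies-local-optimality result — at the cost of more bookkeeping; the paper's objective-comparison is shorter and avoids subdifferential calculus entirely. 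Two small points to tighten: Remark \ref{Remark-Alg1}(a) does not literally say the $\ell_\infty$ constraint is inactive — the clean justification is that the hypothesis $\mu\ge\|\overline{x}\|_\infty+\|(A_{\overline{S}}^{\top}A_{\overline{S}})^{-1}A_{\overline{S}}^{\top}e\|_\infty\ge\|x^{\rm o}\|_\infty$ makes your candidate feasible, so the unique unconstrained minimizer is also the constrained one; and in the local-optimality step you should record the uniform bound explicitly (e.g. $\lambda\|d_{\overline{S}^c}\|_0-\frac{1}{m}\|A_{\overline{S}^c}^{\top}(E-A_{\overline{S}}(A_{\overline{S}}^{\top}A_{\overline{S}})^{-1}A_{\overline{S}}^{\top})e\|_\infty\|d_{\overline{S}^c}\|_1\ge0$ for $\|d\|_\infty\le1$) to confirm the neighborhood is uniform.
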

 \begin{proof}
 From Lemma \ref{Lemma-oracle}, $x^{\rm o}$ is the unique solution of $\min_{{\rm supp}(x)\subset \overline{S}}\|Ax-b\|^2$, and $x^{\rm o}=((A_{\overline{S}}^{\top}A_{\overline{S}})^{-1}A_{\overline{S}}^{\top}b;\ \textbf{0})
 =(\overline{x}_{\overline{S}}+(A_{\overline{S}}^{\top}A_{\overline{S}})^{-1}A_{\overline{S}}^{\top}e;\ \textbf{0})$.
 Thus, if $\min_{i\in \overline{S}}|\overline{x}_i|>\|(A_{\overline{S}}^{\top}A_{\overline{S}})^{-1}A_{\overline{S}}^{\top}e\|_\infty$,
 then ${\rm sign}(x^{\rm o})={\rm sign}(\overline{x})$.
  Using the definition of $x^{\overline{k}}$, $T^{\overline{k}-1}=\overline{S}^c$, and the feasibility of $x^{\rm o}$ to subproblem \eqref{inexact-subprob} for $k=\overline{k}$ leads to
 \begin{align}
 &\frac{1}{2m}\|Ax^{\overline{k}}\!-\!b\|^2+\lambda\sum_{i\in \overline{S}^c}|x^{\overline{k}}_{i}|\leq \frac{1}{2m}\|Ax^{\rm o}\!-\!b\|^2\nonumber\\
\Leftrightarrow\ &\frac{1}{2m}\|A(x^{\overline{k}}\!-\!x^{\rm o})\|^2-\frac{1}{m}\langle A(x^{\overline{k}}\!-\!x^{\rm o}), b\!-\!Ax^{\rm o}\rangle+\lambda\sum_{i\in \overline{S}^c}|x^{\overline{k}}_{i}|\leq 0\nonumber\\
\Leftrightarrow\ &-\frac{1}{m}\langle A_{\overline{S}^c}x^{\overline{k}}_{\overline{S}^c}, (E\!-\!A_{\overline{S}}(A_{\overline{S}}^\top A_{\overline{S}})^{-1}A_{\overline{S}}^\top) e\rangle+\lambda\sum_{i\in \overline{S}^c}|x^{\overline{k}}_{i}|\leq -\frac{1}{2m}\|A(x^{\overline{k}}\!-\!x^{\rm o})\|^2\nonumber\\
\Rightarrow\ &-\frac{1}{m}\|A_{\overline{S}^c}^T(E\!-\!A_{\overline{S}}(A_{\overline{S}}^\top A_{\overline{S}})^{-1}A_{\overline{S}}^\top) e\|_\infty\|x^{\overline{k}}_{\overline{S}^c}\|_1
+\lambda\sum_{i\in \overline{S}^c}|x^{\overline{k}}_{i}|\leq -\frac{1}{2m}\|A(x^{\overline{k}}\!-\!x^{\rm o})\|^2\leq 0,\nonumber
\end{align}
where the second equivalence is due to $b-Ax^{\rm o}=(E\!-\!A_{\overline{S}}(A_{\overline{S}}^\top A_{\overline{S}})^{-1}A_{\overline{S}}^\top) e$ and $x^{\rm o}_{\overline{S}^c}=\textbf{0}$ by Lemma \ref{Lemma-oracle}. Note that 
$\lambda> \frac{1}{m}\|A_{\overline{S}^c}^{\top}(E-A_{\overline{S}}(A_{\overline{S}}^\top A_{\overline{S}})^{-1}A_{\overline{S}}^\top) e\|_\infty$.
The above inequality implies that $x^{\overline{k}}_{\overline{S}^c}=0$ and $\|A(x^{\overline{k}}\!-\!x^{\rm o})\|=0$. Now, by using the condition $\sigma_A(r)>0$, we have $0=\|A(x^{\overline{k}}\!-\!x^{\rm o})\|=\|A_{\overline{S}}(x^{\overline{k}}_{\overline{S}}\!-\!x^{\rm o}_{\overline{S}})\|\geq \sigma_A(r)\|x^{\overline{k}}_{\overline{S}}\!-\!x^{\rm o}_{\overline{S}}\|$. 
Thus, we have $x^{\overline{k}}= x^{\rm o}$. By Remark \ref{Remark-Alg1} (c), $x^{\overline{k}}$ is a stationary point of problem \eqref{CS}. Invoking \citep[][Remark 3.3]{PanLiu22}, we conclude that $x^{\overline{k}}$ is a (strong) local optimal solution of \eqref{CS}. 
\end{proof}

 Next we improve the conclusion of Theorem \ref{theorem1-nearly} by proving that under a little stronger rSRNSP, a few truncated $\ell_1$-norm regularized problems are enough to identify $\overline{S}$ in many cases and a better $\ell_1$-norm error bound can be achieved. 
 \begin{theorem}\label{theorem2-nearly}
Suppose that Assumption \ref{ass1} holds and $A$ has the $(r, \varrho\beta'(\lambda))$-rSRNSP($M,\gamma,\tau$) with $\beta'(\lambda)\!:=\big\{\beta_0(\lambda),\beta_1(\lambda),\ldots, \beta_{\widetilde{r}-1}(\lambda),\underbrace{\beta_{r-1}(\lambda),\ldots,\beta_{r-1}(\lambda)}_{r-\widetilde{r}}\big\}$ for some $\widetilde{r}\in[r\!-\!1]$. Let $\{x^k\}_{k\in[\overline{k}]}$ and $\{I^k\}_{k\in[\overline{k}-1]}$ be generated by Algorithm \ref{Alg1} with $\epsilon\in[6\widehat{M}\varsigma_{0},\beta_{r-1}(\lambda))$. Then, by letting $\widehat{k}$ be the first integer such that $|\bigcup_{j=1}^{\widehat{k}-1}I^{j}|\ge\widetilde{r}$, it holds that ${\rm supp}(x^{\widehat{k}},r)\!=\!\overline{S}$ and
 \begin{equation}\label{result422-noise}
 \|x^{\widehat{k}}-x^{\rm o}\|_1\leq \frac{10(1\!+\!\kappa)}{3}\big[(r\!-\!\widetilde{r})M+1.8\widehat{M}\varsigma_{\widehat{k}-1}\big] +\frac{\sqrt{2r\lambda}}{\sigma_{\!A}(r)}\sqrt{(r\!-\!\widetilde{r})M+1.8\widehat{M}\varsigma_{\widehat{k}-1}}.
 \end{equation}
 If $\beta'(\lambda)$ is replaced with the array $\big\{\beta_0(\lambda),\beta_1(\lambda),\ldots, \beta_{\widetilde{r}-1}(\lambda),\underbrace{0,\ldots,0}_{r-\widetilde{r}}\big\}$ for some $\widetilde{r}\in[r\!-\!1]$, then under the same conditions, it holds that ${\rm supp}(x^{\widehat{k}},r)\!=\!\overline{S}$ and
 \begin{equation}
 \|x^{\widehat{k}}\!-\!x^{\rm o}\|_1\le\frac{(1\!+\!\kappa)(18\widehat{M} \varsigma_{\widehat{k}-1}+5\lambda r\tau^2)}{3(1-\gamma)} +\frac{\sqrt{2r\lambda}}{\sigma_{\!A}(r)}\sqrt{(r\!-\!\widetilde{r})M+1.8\widehat{M}\varsigma_{\widehat{k}-1}}.\nonumber
 \end{equation}
 \end{theorem}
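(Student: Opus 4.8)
The plan is to re-run the machinery behind Theorem~\ref{theorem1-nearly}, but to track Algorithm~\ref{Alg1} only up to the distinguished index $\widehat{k}$ and to exploit that the stronger array $\beta'(\lambda)$ (or its zero-padded variant) already supplies, at step $\widehat{k}$, the $(r-|J|,\cdot)$-rRNSP demanded by Corollary~\ref{corollary1-nearly}. Since $\beta'(\lambda)\le\beta(\lambda)$ componentwise (because $\beta_{r-1}(\lambda)\le\beta_j(\lambda)$ for every $j$), Remark~\ref{Rem-rSRNSP} gives that the $(r,\varrho\beta'(\lambda))$-rSRNSP$(M,\gamma,\tau)$ implies the $(r,\varrho\beta(\lambda))$-rSRNSP$(M,\gamma,\tau)$, so Proposition~\ref{finite-Alg1} and Theorem~\ref{theorem1-nearly} are in force: the $I^j$ are nonempty, pairwise disjoint, contained in $\overline{S}$, and $\bigcup_{j=1}^{\overline{k}-1}I^j=\overline{S}$. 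Hence $k\mapsto|\bigcup_{j=1}^{k-1}I^j|$ increases strictly from $0$ to $r$, and since $1\le\widetilde{r}\le r-1$ the index $\widehat{k}$ is well defined with $2\le\widehat{k}\le\overline{k}$, so $x^{\widehat{k}}$ is produced by Algorithm~\ref{Alg1}. I would first dispose of the boundary case $\widehat{k}=\overline{k}$: here $\bigcup_{j=1}^{\overline{k}-1}I^j=\overline{S}$, so ${\rm supp}(x^{\widehat{k}},r)={\rm supp}(x^{\overline{k}},r)=\overline{S}$ by Theorem~\ref{theorem1-nearly}, and the error bound for $x^{\overline{k}}$ furnished by that theorem is dominated by each of the two right-hand sides claimed here (using $r-\widetilde{r}\ge1$ for the leading terms and $\tfrac1{1-\gamma}\ge1$ for the second-part constant), so the assertions follow at once.

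For the main case $\widehat{k}<\overline{k}$, set $J:=(T^{\widehat{k}-1})^c=\bigcup_{j=1}^{\widehat{k}-1}I^j$. Disjointness together with Theorem~\ref{theorem1-nearly} gives $J\subset\overline{S}$ and $|J|<r$, while the choice of $\widehat{k}$ gives $|J|\ge\widetilde{r}$; thus $\widetilde{r}\le|J|\le r-1$, and $x^{\widehat{k}}$ is an optimal solution of \eqref{equa-JTL1} with this $J$, $\xi=\xi^{\widehat{k}-1}$, $\|\xi^{\widehat{k}-1}\|_\infty\le\varsigma_{\widehat{k}-1}\le\varsigma_0$. By Remark~\ref{SRNSP-remark1}, the $(r,\varrho\beta'(\lambda))$-rSRNSP$(M,\gamma,\tau)$ coincides with the $(r-|J|,\varrho\beta'_{|J|}(\lambda))$-rRNSP$(M,\gamma,\tau)$, and since $|J|\ge\widetilde{r}$ the relevant array entry equals $\beta_{r-1}(\lambda)$; as $\varrho\le1$, Remark~\ref{Rem-rSRNSP} upgrades this to the $(r-|J|,\beta_{r-1}(\lambda))$-rRNSP$(M,\gamma,\tau)$. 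The remaining hypotheses of Corollary~\ref{corollary1-nearly} (with $\rho=\gamma$, $\nu=\tau$) come straight from Assumption~\ref{ass1}: $\sigma_{\!A}(r)>0$ from $\sigma_{\!A}(2r-1)>0$, $\mu\ge M$, $J\subsetneqq\overline{S}$, $\varsigma_0\le\frac{1-\gamma}{5(1+\kappa)}$, the lower bound on $\lambda$, and $\beta_{r-1}(\lambda)>\frac{18\widehat{M}\varsigma_0+5r\lambda\tau^2}{3\varrho(1-\gamma)}\ge\frac{18\widehat{M}\|\xi^{\widehat{k}-1}\|_\infty+5r\lambda\tau^2}{3(1-\gamma)}$ (using $\varsigma_{\widehat{k}-1}\le\varsigma_0$ and $\varrho\le1$). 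Corollary~\ref{corollary1-nearly} then yields ${\rm supp}(x^{\widehat{k}},r)=\overline{S}$ and \eqref{pine-errorb1-result-rho} with $\varrho=1$; discarding the $(n-r)\beta_{|J|}(\lambda)$ alternative in the minimum and using $r-|J|\le r-\widetilde{r}$ and $\|\xi^{\widehat{k}-1}\|_\infty\le\varsigma_{\widehat{k}-1}$ gives \eqref{result422-noise}.

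For the zero-padded array the support statement is obtained identically, since $\beta''_{|J|}(\lambda)=0\le\beta_{r-1}(\lambda)$ still supplies, via Remark~\ref{Rem-rSRNSP}, the $(r-|J|,\beta_{r-1}(\lambda))$-rRNSP needed by Corollary~\ref{corollary1-nearly}. The sharper $\ell_1$-error bound, however, requires reopening the proof of Proposition~\ref{prop1-nearly}: there the dimension-dependent alternative $(n-r)\eta$ enters only through the subcase $\|x^{\widehat{k}}_{\overline{S}^c}\|_\infty<\eta$, in which the side condition $\|d_{\overline{S}^c}\|_\infty\ge\eta$ of \eqref{ineq1-RNSP} fails; but with the $(r-|J|,0)$-rRNSP$(M,\gamma,\tau)$ in hand, \eqref{ineq1-RNSP} holds for $d=x^{\widehat{k}}-x^{\rm o}$, $S=\overline{S}$, $I=\overline{S}\setminus J$ with no side condition, so that subcase disappears. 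Combining \eqref{ineq1-RNSP} with the objective-decrease inequality \eqref{boundx0x}, the estimate $\|A_{\overline{S}^c}^\top(E-A_{\overline{S}}(A_{\overline{S}}^\top A_{\overline{S}})^{-1}A_{\overline{S}}^\top)e\|_\infty\le\tfrac{m\lambda(1-\gamma)}{2}$ and $\|\xi^{\widehat{k}-1}\|_\infty\le\varsigma_{\widehat{k}-1}$ then bounds $\|x^{\widehat{k}}_{\overline{S}^c}\|_1$ directly by $\frac{18\widehat{M}\varsigma_{\widehat{k}-1}+5\lambda r\tau^2}{3(1-\gamma)}$, precisely the threshold appearing in the hypothesis of Proposition~\ref{prop1-nearly}; feeding this into the $\kappa$-estimate $\|(x^{\widehat{k}}-x^{\rm o})_{\overline{S}}\|_1\le\kappa\|x^{\widehat{k}}_{\overline{S}^c}\|_1+[\sigma_{\!A}(r)]^{-1}\sqrt{2r\lambda}\sqrt{(r-\widetilde{r})M+1.8\widehat{M}\varsigma_{\widehat{k}-1}}$ taken from that proof gives the second claimed inequality.

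The main obstacle is exactly this last refinement: recognizing that the crude $(n-r)\eta$ term in Proposition~\ref{prop1-nearly} and Corollary~\ref{Corollary-nearly-rho} is an artifact of the rRNSP side condition becoming unusable for small $\eta$, and that a zero array entry removes that condition, so $\|x^{\widehat{k}}_{\overline{S}^c}\|_1$ is governed by the critical threshold rather than by the ambient dimension $n-r$. A secondary, purely bookkeeping issue is propagating the factor $\varrho$ and the monotonicity $\varsigma_{\widehat{k}-1}\le\varsigma_0$ correctly through the threshold inequalities so that Assumption~\ref{ass1} still suffices when applied at step $\widehat{k}$ rather than at step $1$.
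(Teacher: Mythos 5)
Your proposal is correct and follows essentially the same route as the paper's proof: reduce to Theorem \ref{theorem1-nearly} for the support containment of the $I^j$, apply Corollary \ref{corollary1-nearly} at step $\widehat{k}$ (after upgrading the $(r-|J|,\varrho\beta_{r-1}(\lambda))$-rRNSP via Remark \ref{Rem-rSRNSP}) for the first bound, and for the zero-padded array re-run the Proposition \ref{prop1-nearly} machinery using that the $(r-|J|,0)$-rRNSP carries no side condition, which is exactly how the paper obtains \eqref{ineq42-index-error-nearly}. The only nit is that the "objective-decrease inequality" you should combine with \eqref{ineq1-RNSP} is the analogue of \eqref{psa1-ine} (comparison with $x^{\rm o}$), not \eqref{boundx0x} (comparison with $\overline{x}$), but this does not affect the argument.
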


 To close this section, we take a closer look at the conditions in Assumption \ref{ass1}. First, we notice that the condition $\sigma_{\!A}(2r\!-\!1)>0$ is weaker than $\kappa_{-}(\frac{2r}{1-\rho})>0$ for some $\rho\in(0,1)$ with $\kappa_{-}(s)\!:=\min_{\|d\|_0\leq s, \|d\|=1}\frac{1}{m}\|Ad\|^2$, because $\sigma_{\!A}(2r\!-\!1)= \sqrt{\kappa_{-}(2r\!-\!1)}\geq \sqrt{\kappa_{-}(\frac{2r}{1-\rho})}$. The condition $\kappa_{-}(\frac{2r}{1-\rho})>0$ was used in \cite[Theorem 4]{Zhang-L0} to achieve the model selection consistency of the zero-norm regularized problem \eqref{CS}. In fact, the condition $\kappa_{-}(s)>0$ with $s\geq 3r$ was required in \citep[][Theorem 1]{ZT-Multi-b} to achieve exact recovery of the support set $\overline{S}$ for the MSCR-cL1. We claim that the condition $\sigma_{\!A}(2r\!-\!1)>0$ is also weaker than the restricted eigenvalue condition $\chi(3)>0$ with $\chi$ defined in \eqref{REC-ineq} \citep[see,][]{Bickel-Lasso,Negahban-Lasso}. Indeed, for all nonzero $d\in \mathbb{R}^n$ with $\|d\|_0\leq 2r-1$, by choosing $S={\rm supp}(d, r)$, we have $\|d_{S^c}\|_1\leq \|d_S\|_1$, which implies that $\sigma_{\!A}(2r\!-\!1)\geq \chi(1)\geq \chi(3)$. Second, for the condition $\mu\ge M$, as shown by Theorem \ref{theorem1-nearly} and Figure \ref{fig0} in Section \ref{sec5.2}, the value of $\mu$ has no influence on the theoretical results as long as $\mu$ is suitably larger than $M$, so its choice in practical computation is a piece of cake. Third, the condition $\beta_{r-1}(\lambda)>\frac{18\widehat{M}\varsigma_0+5r\lambda\tau^2}{3\varrho(1-\gamma)}$ for some $\varrho\in(0,1]$, $\gamma\in(0,1),\,\tau>0$ and $\varsigma_0\leq\frac{1-\gamma}{5(1+\kappa)}$ essentially requires smaller $\varsigma_0$ and $\tau$. The former means a high accuracy for the solution of the Lasso problem, while the latter requires $A$ to have better NSP. This restriction is a little stronger, but by Proposition \ref{bound-feasol} it will hold if the $r$th largest (in modulus) entry of the true vector $\overline{x}$ is not too small and $\sigma_{\!A}(2r\!-\!1)>0$.
 \section{Numerical experiments}\label{sec5}

 In this section we validate the theoretical results in Section \ref{sec4.2} by applying Algorithm \ref{Alg1} to solve some synthetic and real-data examples, and compare the performance of Algorithm \ref{Alg1} with that of the LLA-SCAD by \citet{Fan-FCP}, the MSCR-cL1 by \citet{ZT-Multi-b} and the DCA-TrL1 by \citet{TL1}. Among others, every iteration of the LLA-SCAD and MSCR-cL1 needs to solve a weighted $\ell_1$-norm minimization problem of the form
 \begin{equation}\label{sprob-LLA}
  \min_{x,z\in\mathbb{R}^n}\Big\{\frac{1}{2m}\|z\|^2+\lambda\|w^k\circ x\|_1\ \ {\rm s.t.}\ \ Ax-b=z\Big\},
 \end{equation}
 and the semismooth Newton augmented Lagrangian (SSNAL) algorithm in Section \ref{sec5.1} (see \citet{SDFLasso}) can be used to solve it. Every iteration of the DCA-TrL1 needs to solve 
 \begin{equation*}
  \min_{x\in\mathbb{R}^n}\Big\{\frac{1}{2m}\|Ax-b\|^2+c\|x\|^2+(1+a^{-1})\lambda\|x\|_1-\langle x,v^k\rangle\Big\},
 \end{equation*}
 where $a>0$ and $c>0$ are the constants, and $v^k\in\mathbb{R}^n$ is a vector. By following Section \ref{sec5.1}, one can develop the SSNAL algorithm for computing its equivalent reformulation 
 \begin{equation}\label{sprob-TL1}
  \min_{x,s\in\mathbb{R}^n,z\in\mathbb{R}^m}\Big\{\frac{1}{2m}\|z\|^2+c\|x\|^2+(1+a^{-1})\lambda\|s\|_1-\langle x,v^k\rangle\ \ {\rm s.t.}\ \ Ax-b=z,\,x-s=0\Big\}.
 \end{equation}
 \subsection{SSNAL for solving subproblems}\label{sec5.1}
  
 Let $T\subset[n]$ be an index set and $\mathbb{B}_{\gamma}\!:=\{z\in\mathbb{R}^{|T^c|}\,|\,\|z\|_{\infty}\le\gamma\}$ for $\gamma>0$. The main computation cost of Algorithm \ref{Alg1} in each iteration is to solve inexactly the problem
 \begin{equation}\label{Esubprob}
  \min_{x\in\mathbb{R}^n,z\in\mathbb{R}^m}\Big\{\frac{1}{2m}\|z\|^2+\lambda\|x_{T}\|_1+\delta_{\mathbb{B}_{\mu}}(x_{T^{c}})\ \ {\rm s.t.}\ \ Ax-b=z\Big\}.
 \end{equation}
 Inspired by \citet{SDFLasso}'s work, we develop an SSNAL algorithm for solving \eqref{Esubprob}. Let
 $f(x)\!:=\lambda\|x_{T}\|_1+\delta_{\mathbb{B}_{\mu}}(x_{T^{c}})$ for $x\in\mathbb{R}^n$. A simple calculation results in the dual of \eqref{Esubprob} as 
 \begin{equation}\label{Edprob1}
  \min_{\zeta\in\mathbb{R}^m}f^*(A^{\top}\zeta)-b^{\top}\zeta+\frac{m}{2}\|\zeta\|^2,
 \end{equation}
 where $\mathbb{R}^n\ni z\mapsto f^*(z):=\delta_{\mathbb{B}_{\lambda}}(z_{T})+\mu\|z_{T^c}\|_1$ is the conjugate of $f$. For a given $\sigma>0$, the augmented Lagrangian function of \eqref{Edprob1} takes the following form
 \[
  L_{\sigma}(\zeta,u;x):=-b^{\top}\zeta+\frac{m}{2}\|\zeta\|^2+f^*(u)+\langle x,A^{\top}\zeta-u\rangle+\frac{\sigma}{2}\|A^{\top}\zeta-u\|^2.
 \]
 The basic iterations of the augmented Lagrangian method (ALM) for solving \eqref{Edprob1} are 
 \begin{subnumcases}{}\label{alm-equa1}(\zeta^{l+1},u^{l+1})=\mathop{\arg\min}_{\zeta\in\mathbb{R}^m,u\in\mathbb{R}^n}L_{\sigma_l}(\zeta,u;x^l),\\
 x^{l+1}=x^l+\sigma_l(A^{\top}\zeta^{l+1}-u^{l+1}),\\
 \sigma_{l}\uparrow\sigma_{l+1}\le\infty.
 \end{subnumcases}
 Next we take a closer look at the solution of \eqref{alm-equa1}. An elementary calculation leads to
 \begin{align*}
 \zeta^{l+1}&=\mathop{\arg\min}_{\zeta\in\mathbb{R}^m}\Phi(\zeta)
  \!:=-b^{\top}\zeta+e_{\sigma_l^{-1}}f^*(A^{\top}\zeta+\sigma_l^{-1}x^l)+\frac{m}{2}\|\zeta\|^2,\\ u^{l+1}&\!=\mathcal{P}_{\!\sigma_l^{-1}}f^*\big(A^{\top}\zeta^{l+1}+\sigma_l^{-1}x^l\big),
 \end{align*}
 where $e_{\sigma_l^{-1}}f^*$ and $\mathcal{P}_{\!\sigma_l^{-1}}f^*$ are the Moreau envelope and  proximal mapping of $f^*$ associated with $\sigma_l^{-1}$. Since $\Phi$ is a smooth and strongly convex function and its gradient is Lipschitz continuous, seeking $\zeta^{l+1}$ is equivalent to finding the unique root to the nonsmooth system
 \begin{equation}\label{SNCG}
  0=\nabla\Phi(\zeta):=m\zeta+\sigma_lA\big[A^{\top}\zeta+\sigma_l^{-1}x^l-\mathcal{P}_{\!\sigma_l^{-1}}f^*\big(A^{\top}\zeta+\sigma_l^{-1}x^l)\big]-b,
 \end{equation} which is strongly semismooth by \citep[][Propositions 7.4.4 \& 7.4.5]{Pang03}. Since $\mathcal{P}_{\!\sigma_l^{-1}}f^*$ is globally Lipschitz, by \citep[][Definition 2.1]{Hiriart84} the generalized Hessian matrix of $\Phi$ at $\zeta$ has the following form
 \[
  \widehat{\partial}^2\Phi(\zeta):=mE+\sigma_lA\big[E-\partial_{C}\mathcal{P}_{\!\sigma_l^{-1}}f^*(\mathcal{B}(\zeta))\big]A^{\top}\ \ {\rm with}\ \ \mathcal{B}(\zeta)\!:=A^{\top}\zeta+\sigma_l^{-1}x^l,
 \]  
 where $\partial_{C}\mathcal{P}_{\!\sigma_l^{-1}}f^*(\mathcal{B}(\zeta))$ is the Clarke subdifferential of the Lipschitz continuous mapping $\mathcal{P}_{\!\sigma_l^{-1}}f^*$ at $\mathcal{B}(\zeta)$. By the expression of $\mathcal{P}_{\!\sigma_l^{-1}}f^*$, every element of $\partial_{C}\mathcal{P}_{\!\sigma_l^{-1}}f^*(\mathcal{B}(\zeta))$ is a diagonal matrix, say, $D=(d_1,\ldots,d_i)$ with the $i$th diagonal entry $d_i$ given by 
 \begin{align*}
  d_i=\left\{\begin{array}{cl}
  1 & {\rm if}\ |\mathcal{B}_i(\zeta)|\le \lambda\\
  0 & {\rm otherwise}
  \end{array}\right.\ {\rm for}\ i\in T\ \ {\rm and}\ \  d_i=\!\left\{\begin{array}{cl}
  0 & {\rm if}\ |\mathcal{B}_i(\zeta)|\le \mu\sigma_l^{-1}\\
  1 & {\rm otherwise}
 \end{array}\right. {\rm for}\ i\in T^{c}.
 \end{align*}
 
 For the implementation details of the SSNAL algorithm, the interested reader may refer to \citet{SDFLasso}. In the subsequent numerical tests, when applying the SSNAL to solve \eqref{Esubprob} or \eqref{inexact-subprob}, we terminate its iteration at $\zeta^{l}$ once $\|x^{l}-\mathcal{P}_{1}f(x^l\!-\!A^{\top}(Ax^l-b))\|\le\|b\|10^{-6}$.   
 
 All numerical tests of the subsequent two sections are performed in MATLAB 2020b on a laptop computer running 64-bit Windows Operating System with an Intel(R) Core(TM) i7-7700HQ 2.80GHz and 16 GB RAM. We evaluate the quality of an output by the relative error (relerr) from the true sparse solution, defined by $\|x^{\rm out}-\overline{x}\|/\|\overline{x}\|$.  
 \subsection{Influence of $\mu$ and $\varrho$ on iSCRA-TL1}\label{sec5.2}

 We explore the influence of parameters $\mu$ and $\varrho$ on the performance of iSCRA-TL1 via the sparse linear regression examples, which are often used to test the efficiency of sparse optimization models and algorithms for solving them \citep[see][]{Fan-FCP,Fan-LAMM}. These examples are generated randomly in a high-dimensional setting via model \eqref{LinearModel} with $e\sim N(\textbf{0},E)$, where every row of $A\in\mathbb{R}^{m\times n}$ obeys the multivariate normal distribution $N(\textbf{0},\Sigma)$.
 \begin{example}\label{test-exam1}
 The true $\overline{x}$ has the form $(\underbrace{\overline{z},\ldots,\overline{z}}_{40})^{\top}$ with $\overline{z}=(3,1.5,0,0,2,\underbrace{0,0,\ldots,0}_{25})$. 
 This example has $n=1200$ predictors, and the pairwise correlation between the $i$th predictor and the $j$th one is set to be $\Sigma_{i,j}=\theta^{|i-j|}$ with $\theta=0.6$. 
 \end{example}
\begin{example}\label{test-exam2}
 The true $\overline{x}$ has the form $(\underbrace{\overline{z},\ldots,\overline{z}}_{150})^{\top}$ with $\overline{z}=(\underbrace{0,\ldots,0}_{7},1)$. This example has $n=1200$ predictors, and the covariance matrix $\Sigma$ is same as in Example \ref{test-exam1} except $\theta=0.6$.
 \end{example}
 When $\theta$ is closer to $1$, the above two examples become more difficult due to high correlation. 
 \begin{example}\label{test-exam3}
 It is same as Example \ref{test-exam1} except that   $\overline{z}=(3,1.5,0,0,2,\underbrace{0,0,\ldots,0}_{20})$ and $\theta=0.75$. 
 \end{example}

 Assumption \ref{ass1} requires $\mu$ to satisfy $\mu\ge M=\|\overline{x}\|_\infty+\frac{\sqrt{r}}{m\sigma_{\!A}(r)}\|A^\top e\|_\infty$ though its value has no influence on the results of Theorems \ref{theorem1-nearly} and \ref{theorem2-nearly}. We validate this conclusion by applying Algorithm \ref{Alg1} with different $\mu$ to solve Example \ref{test-exam1} for $m=400$ and Example \ref{test-exam2} for $m=600$, respectively, and then looking at the relative error and the number of iterations. Figure \ref{fig0} shows the average relative error and number of iterations of the total $10$ running for every $\mu$, with $\lambda=(10/m)\|A^{\top}b\|_{\infty}$ for Example \ref{test-exam1} and $\lambda=(40/m)\|A^{\top}b\|_{\infty}$ for Example \ref{test-exam2}. We see that whether for Example \ref{test-exam1} or for Example \ref{test-exam2}, the relative error and the number of iterations first increase, and then keep unchanged when $\mu$ is more than a threshold $\overline{\mu}$. Among others, the threshold $\overline{\mu}=5>\|\overline{x}\|_{\infty}=3$ for Example \ref{test-exam1} and $\overline{\mu}=1.8>\|\overline{x}\|_{\infty}=1$ for Example \ref{test-exam2}. Furthermore, the relative error and number of iterations for $\mu\in[\|\overline{x}\|_\infty,\overline{\mu})$ have no significant difference from those after stabilizing. This coincides with the conclusions of Theorems \ref{theorem1-nearly} and \ref{theorem2-nearly}. In view of this, we take $\mu=10^3$ for the subsequent numerical tests.    
 \begin{figure}[h]
 \centering
 \includegraphics[width=\textwidth]{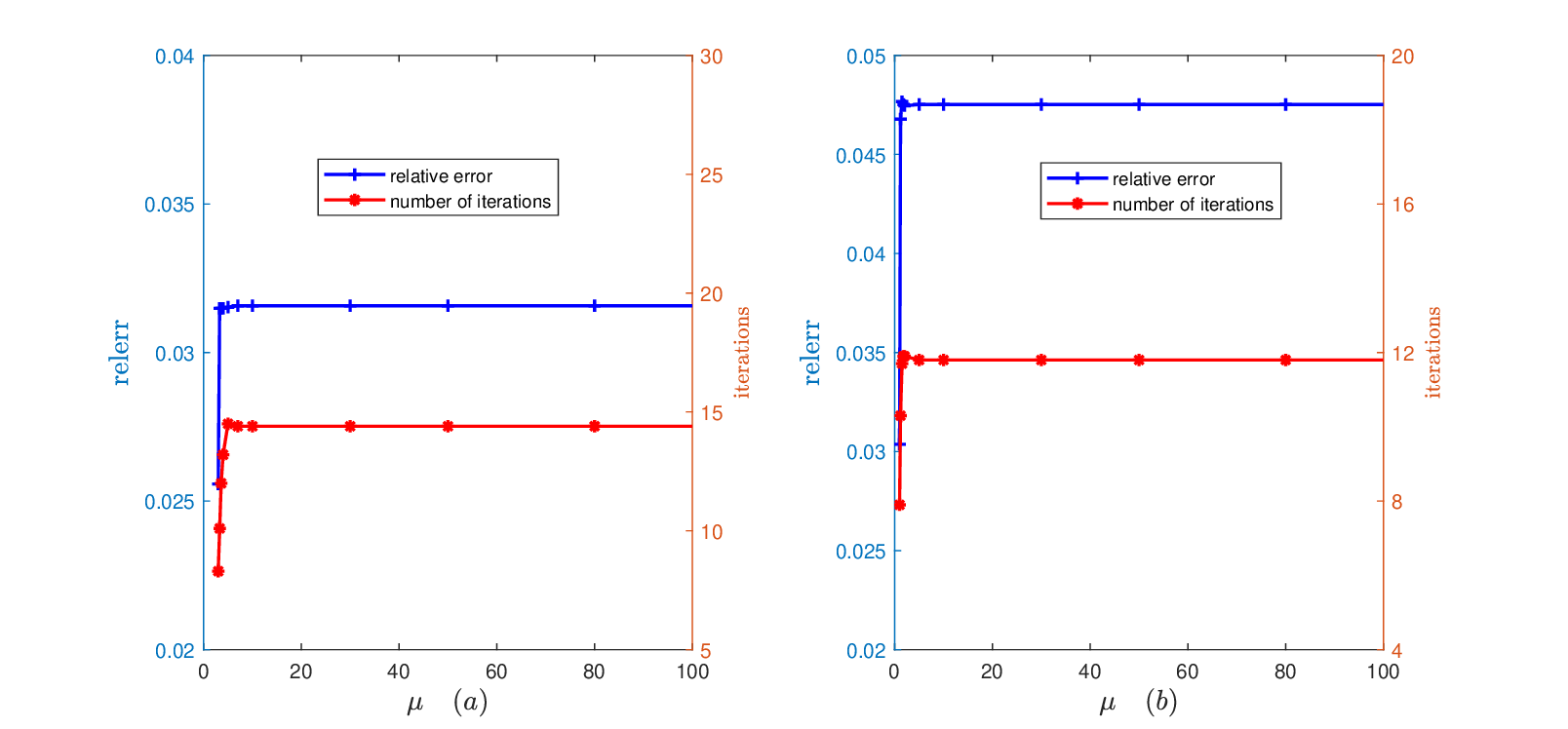}
 \caption{Relative error and number of iterations under different $\mu$ with $\varrho=0.8$: Example \ref{test-exam1} (left) and Example \ref{test-exam2} (right)}
 \label{fig0}
 \end{figure}

 From the iterations of Algorithm \ref{Alg1}, a smaller $\varrho$ implies a larger $I^k$, which by Proposition \ref{finite-Alg1} means the less number of iterations. By Theorems \ref{theorem1-nearly} and \ref{theorem2-nearly}, a smaller $\varrho$ requires $A$ to have a stronger rSRNSP which leads to a larger $\sigma_{\!A}(r)$. Then, a smaller $\varrho$ implies a smaller relative error for the final output of Algorithm \ref{Alg1}. Figure \ref{fig1} below shows the average relative error and number of iterations for the total $10$ running of Algorithm \ref{Alg1} with different $\varrho$ to solve Example \ref{test-exam3} with $m=400$ and $600$, respectively. The parameter $\lambda$ is chosen to be $(10/m)\|A^{\top}b\|_{\infty}$ We see that the smaller $\varrho$ requires the less number of iterations whether for $m=400$ (see Figure \ref{fig1}  (a)) or $m=600$ (see Figure \ref{fig1}  (b)). Figure \ref{fig1} indicates that for $m=400$ the smaller $\varrho$ yields the better relative error, and for $m=600$ the relative errors associated with different $\varrho$ have tiny difference. By Theorem \ref{theorem1-nearly}, when $A$ has the $(r, \varrho\beta(\lambda))$-rSRNSP($M,\gamma,\tau$), the value of $\varrho$ has no influence on the error bound. This interprets why for $m=600$  the relative errors associated with different $\varrho$ have tiny difference. When $m=400$, it is highly possible for the $(r, \varrho\beta(\lambda))$-rSRNSP($M,\gamma,\tau$) of $A$ not to hold. In this case, the above test results tell us that a smaller $\varrho$ is a good choice. Motivated by this, we always use $\varrho=0.2$ for the subsequent numerical tests.
 \begin{figure}[h]
 \centering
 \includegraphics[width=\textwidth]{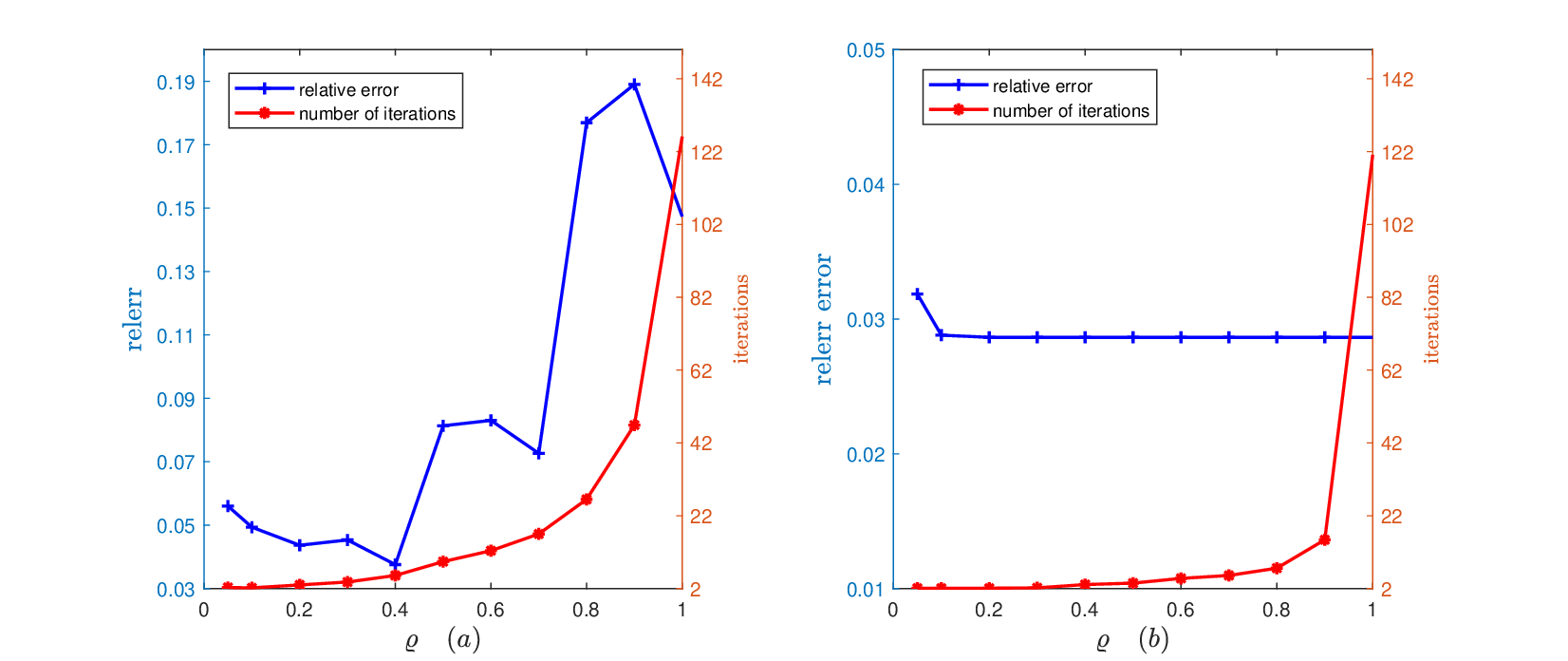}
 \caption{Relative error and number of iterations under different $\varrho$ with $\mu=10^3$}
 \label{fig1}
 \end{figure} 

 \subsection{Numerical comparison}\label{sec5.3}

 In this section we compare the performance of iSCRA-TL1 with that of three sequential convex relaxation algorithms, LLA-SCAD, MSCR-cL1 and DCA-TrL1, on synthetic and real-data examples. The constant $a$ in LLA-SCAD is chosen to be $3.7$ by \citet{Fan-SCAD}, the parameter $\varepsilon$ in MSCR-cL1 is chosen to be $0.5\sqrt{\ln(n)/m}$ by the theoretical results of \citet{ZT-Multi-b}, and the constant $a$ in DCA-TrL1 is chosen as $1$ by \citet{TL1}. Our preliminary tests indicate that the DCA-TrL1 with a smaller $c$ is faster than the one with a larger $c$, so we choose $c=10^{-8}$. The four solvers stop at the $k$th iteration whenever  
 \[
 {\|x^{k}-x^{k-1}\|}/{\|x^k\|_1}\le 10^{-3}\ \ {\rm or}\ \ k>k_{\rm max}=50.
 \]
 \subsubsection{Synthetic data}\label{sec5.3.1}
 
 Consider that the ranges of $\lambda$ corresponding to better relative errors are different for the four solvers, but all of them can be identified by the form $[\underline{c},\overline{c}](\|A^{\top}b\|_{\infty}/m)$. For fairness, we compare their performance for a group of $\lambda=(c_{\lambda}/m)\|A^{\top}b\|_{\infty}$ with $c_{\lambda}$ from an interval for solving the following two sparse linear regressions, both of which have high correlation. 
 \begin{example}\label{test-exam4}
 It is same as Example \ref{test-exam1} except that $\overline{z}=(3,1.5,0,0,2,\underbrace{0,0,\ldots,0}_{20})$ and $\theta=0.8$. 
 \end{example}
 \begin{example}\label{test-exam5}
 The true $\overline{x}$ has the form $(\underbrace{\overline{z},\ldots,\overline{z}}_{50})^{\top}$ with $\overline{z}=(\underbrace{0,0,\ldots,0}_{18},1.2,1)$. This example has $n=1000$ predictors, and the matrix $\Sigma$ is same as in Example \ref{test-exam1} but with $\theta=0.8$.
 \end{example}  
 
 Figure \ref{fig2} (a) plots the average relative errors of the four solvers for solving Example \ref{test-exam4} with $m=400$ and $c_{\lambda}\in\{0.001,\,0.005,\,0.01,\,0.05,\,0.1,\,0.5,\,1.0,\,3.0,\,5.0,\,7.0,\,10,\,15,\,20\}$, and Figure \ref{fig2} (b) plots the average relative errors for solving Example \ref{test-exam4} with $m=600$ and $c_{\lambda}\in\{0.01,\,0.05,\,0.1,\,0.5,\,1.0,\,3.0,\,5.0,\,7.0,\,10,\,15,\,20,\,25,\,30\}$. For every $\lambda$, the result is the average of the total $10$ running. From Figure \ref{fig2}, as the parameter $\lambda$ increases, the relative errors yielded by iSCRA-TL1 and MSCR-cL1 have smaller variation than those yielded by LLA-SCAD and DC-TL1. When $m=400$, the relative errors returned by iSCRA-TL1 for $c_{\lambda}\in[0.001,10]$ are comparable with those yielded by LLA-SCAD for $c_{\lambda}\in[3.0,10]$, by MSCR-cL1 for $c_{\lambda}\in[0.001,7]$, and DC-TL1 for $c_{\lambda}\in[0.001,5]$; and when $m=600$, the relative errors returned by iSCRA-TL1 for $c_{\lambda}\in[3,25]$ are comparable with those yielded by LLA-SCAD for $c_{\lambda}\in[1,20]$, by MSCR-cL1 for $c_{\lambda}\in[0.5,25]$ and by DC-TL1 for $c_{\lambda}\in[0.5,10]$. Obviously, the interval of $c_{\lambda}$ for iSCRA-TL1 to have better relative errors is the largest for $m=400$, and is only smaller than that of MSCR-cL1 for $m=600$. The interval of $c_{\lambda}$ for DC-TL1 to have better relative errors is the smallest whether for $m=400$ or for $m=600$. This shows that the conditions for iSCRA-TL1 to achieve better relative errors are weaker than those for the other three solvers, agreeing with the results of Theorems \ref{theorem1-nearly} and \ref{theorem2-nearly}.
 \begin{figure}[h]
 \centering
 \includegraphics[width=\textwidth]{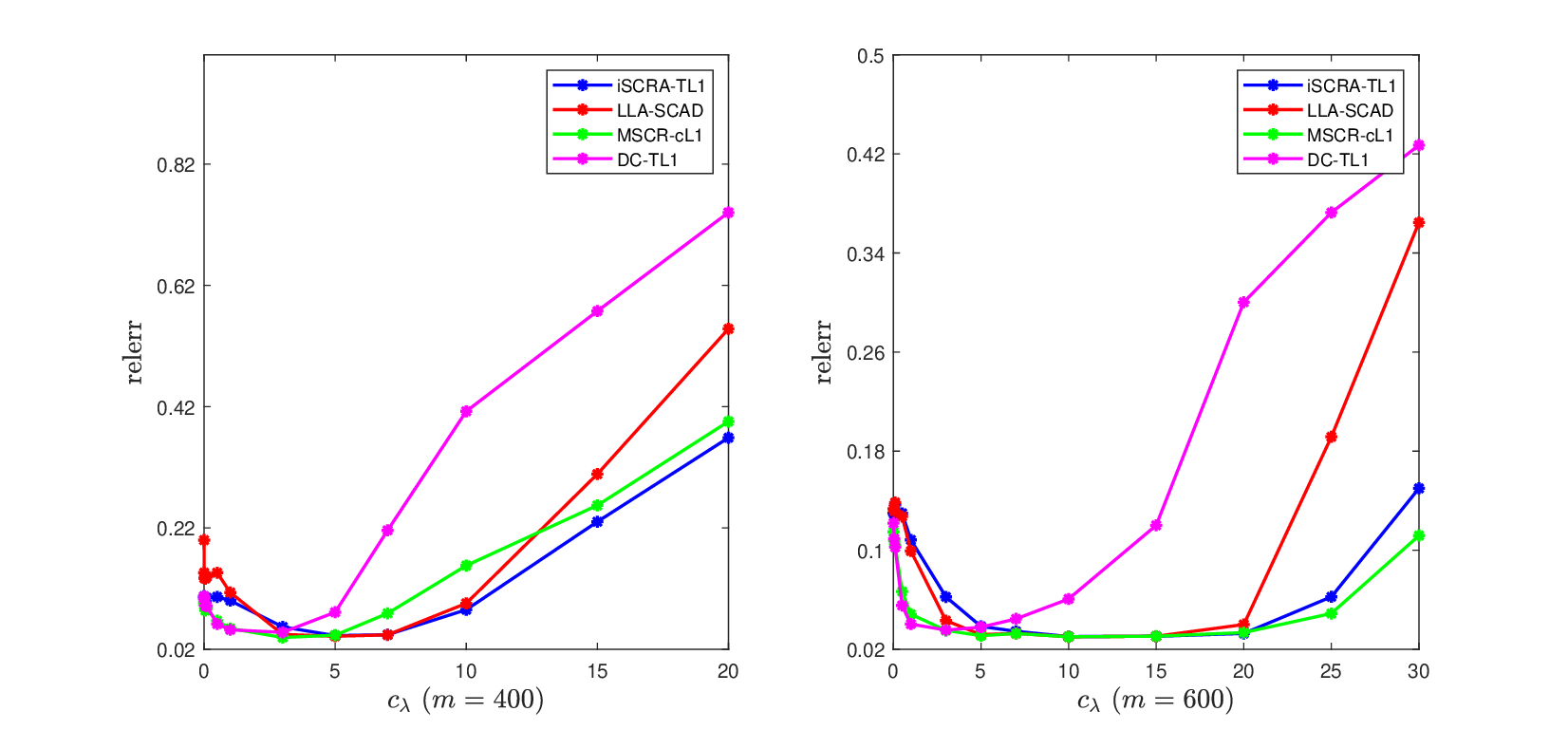}
 \caption{Relative errors of the four solvers with  different $\lambda$ for solving Example \ref{test-exam4}}
 \label{fig2}
 \end{figure} 
\begin{figure}[h]
 \centering
 \includegraphics[width=\textwidth]{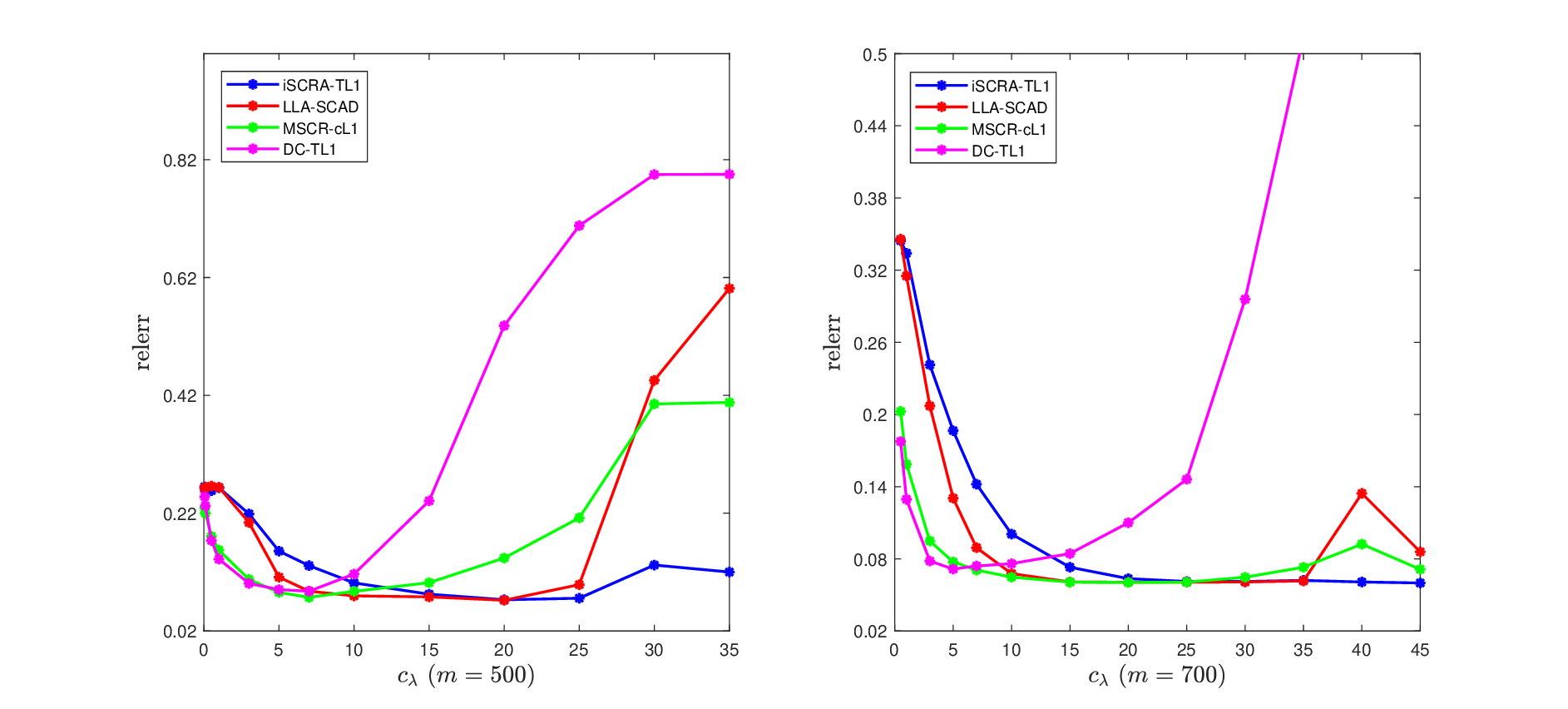}
 \caption{Relative errors of the four solvers with  different $\lambda$ for solving Example \ref{test-exam5}}
 \label{fig3}
 \end{figure} 

 Figure \ref{fig3} (a) plots the average relative errors of the four solvers for solving Example \ref{test-exam5} with $m=500$ and $c_{\lambda}\in\{0.05,\,0.1,\,0.5,\,1.0,\,3.0,\,5.0,\,7.0,\,10,\,15,\,20,\,25,\,30,\,35\}$, and  Figure \ref{fig3} (b) plots the average relative errors for solving Example \ref{test-exam5} with $m=700$ and $c_{\lambda}\in\{0.5,\,1.0,\,3.0,\,5.0,\,7.0,\,10,\,15,\,20,\,25,\,30,\,35,\,40,\,45\}$. For every $\lambda$, the result is the average of the total $10$ running. Figure \ref{fig3} (a) shows that for $m=500$, as the parameter $\lambda$ increases, the relative errors yielded by iSCRA-TL1 has smaller variation than those yielded by the other three solvers, and the relative errors returned by iSCRA-TL1 for $c_{\lambda}\in[7,35]$ are better and almost the same. The interval of $c_{\lambda}$ for iSCRA-TL1 to have better relative errors is the largest. Figure \ref{fig3} (b) shows that for $m=700$ the relative errors yielded by iSCRA-TL1 for $c_{\lambda}\in[15,45]$ have tiny difference, and are comparable with those yielded by LLA-SCAD for $c_{\lambda}\in[10,35]$ and MSCR-cL1 for $c_{\lambda}\in[7,35]$. The range of $\lambda$ for iSCRA-TL1 to yield better relative error is comparable with that of $c_{\lambda}$ for LLA-SCAD and MSCR-cL1, while the range of $\lambda$ for DC-TL1 to yield better relative error is the smallest and lies on the left hand side of the one for the other three solvers. 

 From the above numerical results, iSCRA-TL1 has better robustness under the scenario of low sample or high sample, and the range of $\lambda$ for it to yield better relative errors is larger than that of $\lambda$ for the other three sequential convex relaxation algorithms.  
 \subsubsection{Real data}\label{sec5.3.2} 

 In this part, we compare the performance of the four solvers with the data $(A,b)$ from the LIBSVM data sets \citep{LIBSVM}. These data sets are collected from 10-K Corpus \citep{Corpus} and the UCI data repository \citep{UCI}. For computational efficiency, the zero columns in $A$ are removed. As suggested in \citep{Huang-NIPS}, for the data sets \textbf{pyrim}, \textbf{abalone}, \textbf{bodyfat}, \textbf{housing}, \textbf{mpg}, and \textbf{space\_ga}, we expand their original features by using polynomial basis functions over those features. For example, the last digit in \textbf{pyrim5} indicates that an order 5 polynomial is used to generate the basis functions. This naming convention is also used for the rest of the expanded data sets. As pointed out in \citet{SDFLasso}, these test instances are quite difficult in terms of the problem dimensions and the largest eigenvalue of $AA^{\top}$. 
 \begin{table}
  \caption{Performance of the four solvers for the ten data from LIBSVM datasets\\ `a=iSCRA-TL1', `b=LLA-SCAD', `c=MSCR-cL1' and `d=DC-TL1'.\\ ``NZ'' denotes the number of nonzeros in the final output of four solvers.}
  \label{result-lib}
  \centering
 \scalebox{0.7}{
 \begin{tabular}{c|c|cccc|cccc|cccc}
 \hline
\hline
\multirow{2}*{Name of data}  & \multirow{2}*{$m\lambda$} & \multicolumn{4}{ c| }{NZ}       &\multicolumn{4}{ c| }{loss}            & \multicolumn{4}{ c }{Time(s)} \\
   &     & a&b&c&d   &   a&b&c&d          &  a&b&c&d \\
 \hline
 E2006.train   & 1.0 & 2 & 2 & 2 & 2 & 3.839e-1 & 3.839e-1& 3.839e-1& 3.839e-1 & 14.55 & 14.43 & 14.32 & 22.37 \\
16087;4272227 & 0.1 & 111  & 145 & 134 & 128 & 3.630e-1 & 3.611e-1 & 3.617e-1 & 3.613e-1 & 186.9 & 161.9 & 173.7 & 275.2\\
 \hline
E2006.test  & 1.0& 1 & 1 & 1 &1 & 3.809e-1& 3.809e-1& 3.809e-1& 3.809e-1& 7.16 & 6.17 & 7.03 & 12.25\\
3308;150358 &0.1 & 11  & 13 & 12 & 13 & 3.637e-1 & 3.626e-1 & 3.627e-1 & 3.622e-1 & 216.8 & 307.1 & 310.6 & 735.4\\
\hline
log1p.E2006.train  & 100 & 61 & 68 & 87 & 84 & 3.519e-1 & 3.558e-1 & 3.588e-1 & 3.579e-1& 683.4 & 622.7 & 334.1  & 948.0 \\
 16087;4272227  &40 & 376 & 326 & 431 & 426 & 3.303e-1& 3.366e-1 & 3.436e-1 & 3.429e-1 & 1574& 1937 & 527.1 & 948.5\\
 \hline
 log1p.E2006.test & 50 & 21 & 33 & 16 & 19 & 3.432e-1 & 3.487e-1 & 3.571e-1 &3.535e-1 & 272.4 & 239.4 & 228.4 & 328.3 \\
3308;4272226  & 10 & 716  & 573 & 837 & 808 & 1.848e-1 & 2.039e-1 & 2.593e-1 & 2.575e-1 & 759.2 & 683.1 & 449.4 & 705.8 \\
 \hline
 abalone7  & 10 & 31 & 65 & 42 & 37 & 2.069 & 2.056 & 2.063 & 2.064 & 75.28 & 64.54 & 62.28 & 142.8 \\
4177;6435  & 1.0 & 143  & 205  & 168 & 129 & 1.980 & 1.976 & 1.982 & 1.987 & 538.4 & 474.1 & 366.2 & 814.9 \\
 \hline
 bodyfat7  & 1.0 & 2 & 2 & 2 &2 & 2.960e-3 & 2.960e-3 & 1.036e-2 & 1.069e-2  & 2.42 & 1.89 & 1.73 & 2.71\\
252;116280  & 0.01 & 7  & 15 & 10 & 9  & 2.189e-3 & 2.189e-3 & 2.427e-3 & 2.440e-3 &8.55 & 6.39 & 5.91 & 7.24 \\
 \hline
 housing7 & 10 & 90 & 160 & 111 & 75 & 1.934 & 1.908 & 2.121 & 1.926 & 10.35& 7.60 & 7.83 & 23.70 \\
 506;77520  &1.0 & 227 & 343 & 199 & 188 & 9.513e-1 & 9.480e-1 & 1.059 & 9.568e-1 & 22.99 & 24.99 & 20.22 & 34.97 \\
 \hline
mpg7  & 5 & 71 & 74 & 60 & 55 & 2.018 & 2.042 & 2.127 & 2.069 & 0.96 & 0.56 & 0.52 & 1.33\\
392;3432  & 0.5 & 172 & 212 & 174  & 149 & 1.368 & 1.357 & 1.416  & 1.408 & 4.49 & 4.70 & 2.67 & 3.58 \\
 \hline
 pyrim5   & 0.1 & 34  & 37 & 62 & 43 & 1.740e-2  & 1.864e-2 & 2.290e-2 & 2.276e-2 & 5.67 & 3.58 & 2.39 & 6.03\\
74;201376 & 0.01 & 54  & 85 & 78 & 62 & 7.969e-3 & 7.750e-3 & 9.053e-3  & 8.890e-3 & 8.36 & 4.21 & 3.38 & 7.41\\
 \hline
 space ga9 & 1.0 & 19 & 22  & 20  & 18 & 1.094e-1 & 1.074e-1 & 1.113e-1 & 1.111e-1 & 24.71 & 17.04 & 16.82 &  32.98  \\
3107;5005  & 0.1  & 60 & 67 & 55 & 48 & 9.700e-2 & 9.700e-2 & 9.870e-2 & 9.923e-2 & 149.2 & 77.53 & 63.56 & 126.4\\
\hline
\hline
\end{tabular}}
\end{table}
 
 Table \ref{result-lib} reports the numerical results of the four solvers with two different $\lambda$. We see that iSCRA-TL1 yields the sparsest solutions for most of test examples, and the loss values returned by it are comparable with those yielded by the other three solvers. For \textbf{pyrim5} and \textbf{bodyfat7} with $\lambda=0.01/m$, iSCRA-TL1 produces the solutions with the best sparsity and lowest loss. For most of examples, the running time of iSCRA-TL1 is comparable with that of LLA-SCAD and MSCR-cL1, but less than that of DC-TL1. This shows that our iSCRA-TL1 is efficient in dealing with large-scale and difficult examples.   
 
 
 \section{Conclusions}\label{sec6}
	
 For the high dimensional sparse linear regression problem \eqref{LinearModel} or the zero-norm regularized problem \eqref{CS}, we proposed an inexact sequential convex relaxation algorithm by solving a sequence of truncated $\ell_1$-norm regularized problems \eqref{inexact-subprob}, and provided its theoretical certificates by means of the novel rRNSP and rSRNSP of the matrix $A$, which are shown to be strictly weaker than the existing robust NSP and REC. We showed that under a mild rSRNSP, our iSCRA-TL1 can identify the support of the true $r$-sparse regression vector via at most $r$ truncated $\ell_1$-norm regularized minimization, and established the error bound of its iterates from the oracle solution and the true sparse vector. The oracle estimator can be obtained via at most $r+1$ truncated $\ell_1$-norm regularized minimization. To the best of our knowledge, this is the first sequential convex relaxation algorithm to achieve the oracle estimator via at most $r\!+\!1$ convex subproblems under a condition weaker than the robust NSP and REC, and allow the initial Lasso estimator not to be of good quality.



\acks{The work of Shujun Bi is supported by the National Natural Science Foundation of China under grant number 12371323
   and the Natural Science Foundation of Guangdong Province under Project No. 2024A1515012611.
   The work of Shaohua Pan is supported by the National Natural Science Foundation of China under grant number 12371299.}


\newpage

\appendix
\section{Some lemmas}\label{secA}

This section includes three technical lemmas that will be used to establish the theoretical guarantees of Algorithm \ref{Alg1}. 
\begin{lemma}\label{LemmaAJ}
 If $\sigma_{\!A}(r)>0$, then $\sqrt{m}\|(A_J^{\top}A_J)^{-1}A_J^{\top}\|\leq 1/\sigma_{\!A}(r)$ holds for any $\emptyset\neq J\subseteqq \overline{S}$.
\end{lemma}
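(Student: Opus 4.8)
The plan is to reduce the statement to a bound on the smallest singular value of the submatrix $A_J$. First I would observe that, since $J\subseteq\overline{S}$ and $|\overline{S}|=r$, we have $|J|\le r$. Given any $z\in\mathbb{R}^{|J|}$ with $\|z\|=1$, I would form the zero-padded vector $x\in\mathbb{R}^n$ with $x_J=z$ and $x_{J^c}=\mathbf{0}$; then $\|x\|_0\le|J|\le r$ and $\|x\|=1$, so by the definition of $\sigma_{\!A}(r)$ in \eqref{sigmaA} we get $\tfrac{1}{\sqrt m}\|A_Jz\|=\tfrac{1}{\sqrt m}\|Ax\|\ge\sigma_{\!A}(r)$. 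By homogeneity this yields $\|A_Jz\|\ge\sqrt{m}\,\sigma_{\!A}(r)\|z\|$ for all $z\in\mathbb{R}^{|J|}$, i.e.\ the smallest singular value of $A_J$ is at least $\sqrt{m}\,\sigma_{\!A}(r)>0$, and in particular $A_J$ has full column rank, so $A_J^{\top}A_J$ is invertible and the quantity in the statement is well defined.

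Next I would translate this singular-value bound into the asserted norm bound. Writing $G:=A_J^{\top}A_J$, which is symmetric positive definite, one has $\lambda_{\min}(G)=\sigma_{\min}(A_J)^2\ge m\,\sigma_{\!A}(r)^2$. A short computation gives
\[
\big\|(A_J^{\top}A_J)^{-1}A_J^{\top}\big\|^2=\big\|G^{-1}A_J^{\top}A_JG^{-1}\big\|=\big\|G^{-1}\big\|=\frac{1}{\lambda_{\min}(G)}\le\frac{1}{m\,\sigma_{\!A}(r)^2},
\]
using that $\|M^{\top}M\|=\|M\|^2$ with $M=(A_J^{\top}A_J)^{-1}A_J^{\top}$. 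Taking square roots and multiplying by $\sqrt m$ gives $\sqrt{m}\,\|(A_J^{\top}A_J)^{-1}A_J^{\top}\|\le 1/\sigma_{\!A}(r)$, which is the claim.

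There is essentially no hard step here; the only point requiring a little care is the first reduction, namely that one may legitimately pass from the $|J|$-dimensional vector $z$ to an $n$-dimensional $r$-sparse vector without changing the relevant norms, and that this is exactly what the definition of $\sigma_{\!A}(r)$ (a minimum over all $r$-sparse unit vectors) permits. Everything after that is the standard identity relating the spectral norm of a pseudoinverse of a full-column-rank matrix to its smallest singular value.
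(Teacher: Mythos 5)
Your proof is correct and follows essentially the same route as the paper: both deduce $\lambda_{\min}(A_J^\top A_J)\ge m\,\sigma_{\!A}(r)^2$ from the definition of $\sigma_{\!A}(r)$ via zero-padding, and both conclude through the identity $\|(A_J^\top A_J)^{-1}A_J^\top\|^2=\|(A_J^\top A_J)^{-1}\|$. (Minor slip: the product you display is $MM^\top$ rather than $M^\top M$, but since $\|MM^\top\|=\|M^\top M\|=\|M\|^2$ this is immaterial.)
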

 \begin{proof}
   Since $\sigma_{\!A}(r)>0$, the smallest eigenvalue of $\frac{1}{m}A_J^{\top}A_J$ is no less than $(\sigma_{\!A}(r))^2$. Then, $m\|(A_J^{\top}A_J)^{-1}\|\leq 1/(\sigma_{\!A}(r))^2$. Let $B:=\sqrt{m}(A_J^{\top}A_J)^{-1}A_J^{\top}$. We have $BB^{\top}=m(A_J^{\top}A_J)^{-1}$. 
   Consequently, $\sqrt{m}\|(A_J^{\top}A_J)^{-1}A_J^{\top}\|=\|B\|= \sqrt{m\|(A_J^{\top}A_J)^{-1}\|} \leq 1/\sigma_{\!A}(r)$.
 \end{proof}

 The following lemma provides the properties of an oracle estimator of model \eqref{LinearModel}.
 \begin{lemma}\label{Lemma-oracle}
 Suppose that $\sigma_{\!A}(r)>0$ and $\widehat{M}$ is defined by \eqref{kappa12}. Then, model \eqref{LinearModel} has a unique oracle estimator $x^{\rm o}=(\overline{x}_{\overline{S}}+(A_{\overline{S}}^\top A_{\overline{S}})^{-1}A_{\overline{S}}^\top e; \textbf{0})^\top$, which satisfies the following properties 
 \begin{itemize}
  \item[(i)]
 $\|x^{\rm o}\|_\infty\leq \|\overline{x}\|_\infty+\frac{\sqrt{r}}{m\sigma_{\!A}(r)}\|A^\top e\|_\infty$, $\|x^{\rm o}\|_1\leq 0.8\widehat{M}$ and
 $\|\overline{x}-x^{\rm o}\|_1\leq \frac{r\|A^\top e\|_\infty}{m\sigma_{\!A}(r)}$;
 
 \item[(ii)] $A_{\overline{S}}^\top(Ax^{\rm o}-b)=\textbf{0}$ and $b-Ax^{\rm o}=(E-A_{\overline{S}}(A_{\overline{S}}^\top A_{\overline{S}})^{-1}A_{\overline{S}}^\top) e$.
 \end{itemize}
 \end{lemma}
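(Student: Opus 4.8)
The plan is to reduce the oracle least-squares problem $\min_{{\rm supp}(x)\subset\overline{S}}\|Ax-b\|^2$ to an unconstrained strictly convex least-squares problem on $\mathbb{R}^{|\overline{S}|}$ and then read off every claimed property from the closed form of its unique solution. Since ${\rm supp}(\overline{x})=\overline{S}$ gives $A\overline{x}=A_{\overline{S}}\overline{x}_{\overline{S}}$, and the constraint ${\rm supp}(x)\subset\overline{S}$ means that only the columns indexed by $\overline{S}$ are active, the oracle problem is equivalent to $\min_{z\in\mathbb{R}^{|\overline{S}|}}\|A_{\overline{S}}z-b\|^2$. Because $\sigma_{\!A}(r)>0$ and $|\overline{S}|\le r$, Lemma \ref{LemmaAJ} applied with $J=\overline{S}$ shows that $A_{\overline{S}}^{\top}A_{\overline{S}}$ is nonsingular, so this reduced problem is strictly convex with unique minimizer $z^{\rm o}=(A_{\overline{S}}^{\top}A_{\overline{S}})^{-1}A_{\overline{S}}^{\top}b$; extending $z^{\rm o}$ by zeros on $\overline{S}^c$ yields the unique oracle estimator $x^{\rm o}$. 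Substituting $b=A_{\overline{S}}\overline{x}_{\overline{S}}+e$ into $z^{\rm o}$ produces $z^{\rm o}=\overline{x}_{\overline{S}}+(A_{\overline{S}}^{\top}A_{\overline{S}})^{-1}A_{\overline{S}}^{\top}e$, which is exactly the announced formula for $x^{\rm o}$.

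Item (ii) would then follow at once: $A_{\overline{S}}^{\top}(Ax^{\rm o}-b)=A_{\overline{S}}^{\top}(A_{\overline{S}}z^{\rm o}-b)=\mathbf{0}$ is precisely the normal equation defining $z^{\rm o}$, and $b-Ax^{\rm o}=b-A_{\overline{S}}z^{\rm o}=e-A_{\overline{S}}(A_{\overline{S}}^{\top}A_{\overline{S}})^{-1}A_{\overline{S}}^{\top}e=(E-A_{\overline{S}}(A_{\overline{S}}^{\top}A_{\overline{S}})^{-1}A_{\overline{S}}^{\top})e$ after cancelling the $A_{\overline{S}}\overline{x}_{\overline{S}}$ terms.

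For item (i) I would bound the two quantities $(A_{\overline{S}}^{\top}A_{\overline{S}})^{-1}A_{\overline{S}}^{\top}e$ and $z^{\rm o}=(A_{\overline{S}}^{\top}A_{\overline{S}})^{-1}A_{\overline{S}}^{\top}b$ using the operator-norm estimate $\sqrt{m}\,\|(A_{\overline{S}}^{\top}A_{\overline{S}})^{-1}A_{\overline{S}}^{\top}\|\le 1/\sigma_{\!A}(r)$ from Lemma \ref{LemmaAJ}, together with the elementary inequalities $\|v\|_\infty\le\|v\|_2$, $\|v\|_1\le\sqrt{|\overline{S}|}\,\|v\|_2\le\sqrt{r}\,\|v\|_2$ and $\|A_{\overline{S}}^{\top}e\|_2\le\sqrt{r}\,\|A^{\top}e\|_\infty$. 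The triangle inequality on $z^{\rm o}=\overline{x}_{\overline{S}}+(A_{\overline{S}}^{\top}A_{\overline{S}})^{-1}A_{\overline{S}}^{\top}e$ gives the $\ell_\infty$-bound on $x^{\rm o}$; the same estimate applied to $\overline{x}-x^{\rm o}=-(A_{\overline{S}}^{\top}A_{\overline{S}})^{-1}A_{\overline{S}}^{\top}e$, which is supported on $\overline{S}$, gives its $\ell_1$-bound; and $\|x^{\rm o}\|_1=\|z^{\rm o}\|_1\le\sqrt{r}\,\|z^{\rm o}\|_2\le\frac{\sqrt{r}\,\|b\|}{\sqrt{m}\,\sigma_{\!A}(r)}$, which is at most $0.8\widehat{M}$ because, by the definition of $\widehat{M}$ in \eqref{kappa12} and $\kappa\ge 0$, already $0.8\widehat{M}\ge\frac{0.8(5+\kappa)}{2}\cdot\frac{\sqrt{r}\,\|b\|}{\sqrt{m}\,\sigma_{\!A}(r)}\ge\frac{2\sqrt{r}\,\|b\|}{\sqrt{m}\,\sigma_{\!A}(r)}$.

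I do not expect a genuine obstacle: the lemma is a routine consequence of the invertibility of $A_{\overline{S}}^{\top}A_{\overline{S}}$, and the one point deserving attention is exactly that this invertibility (equivalently, the full column rank of $A_{\overline{S}}$) is what the hypothesis $\sigma_{\!A}(r)>0$ buys us through Lemma \ref{LemmaAJ}, using $|\overline{S}|\le r$. The remaining work is merely bookkeeping of the constants in (i), for which the operator-norm bound of Lemma \ref{LemmaAJ} and the norm inequalities above suffice.
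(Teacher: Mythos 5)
Your proposal is correct and follows essentially the same route as the paper's proof: reduce the oracle problem to the unconstrained least-squares problem over $\overline{S}$ (invertibility of $A_{\overline{S}}^{\top}A_{\overline{S}}$ coming from $\sigma_{\!A}(r)>0$ via Lemma \ref{LemmaAJ}), read off the closed form and the normal equations for part (ii), and obtain part (i) from the same operator-norm and $\ell_1$--$\ell_2$ estimates. The only cosmetic difference is your $\ell_1$-bound on $x^{\rm o}$, where you apply Lemma \ref{LemmaAJ} directly to $(A_{\overline{S}}^{\top}A_{\overline{S}})^{-1}A_{\overline{S}}^{\top}b$ and get $\frac{\sqrt{r}\|b\|}{\sqrt{m}\sigma_{\!A}(r)}$, a factor of two sharper than the paper's $\|Ax^{\rm o}\|\le 2\|b\|$ argument, but both comfortably yield the stated $0.8\widehat{M}$.
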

 \begin{proof}  
 Note that $\min_{{\rm supp}(x)\subset \overline{S}}\|Ax-b\|^2 \Longleftrightarrow\min_{x=(x_{\overline{S}}; \textbf{0})\in \mathbb{R}^n}\|A_{\overline{S}}x_{\overline{S}}-b\|^2$. Due to $\sigma_{\!A}(r)>0$, they have the same unique optimal solution, written as $x^{\rm o}$. Using $b=A_{\overline{S}}\overline{x}_{\overline{S}}+e$ leads to
 \[
 x^{\rm o}=((A_{\overline{S}}^{\top}A_{\overline{S}})^{-1}A_{\overline{S}}^{\top}b;\ \textbf{0})
 =(\overline{x}_{\overline{S}}+(A_{\overline{S}}^{\top}A_{\overline{S}})^{-1}A_{\overline{S}}^{\top}e;\ \textbf{0}).
 \]
 This, together with $\|(A_{\overline{S}}^{\top}A_{\overline{S}})^{-1}\|\leq \frac{1}{m\sigma_{\!A}(r)}$ and $\|A_{\overline{S}}^{\top}e\|\leq\sqrt{r}\|A_{\overline{S}}^{\top}e\|_\infty$, implies that
 \begin{align}
  \|x^{\rm o}\|_\infty&\leq\|\overline{x}\|_\infty+\|(A_{\overline{S}}^{\top}A_{\overline{S}})^{-1}A_{\overline{S}}^{\top}e\|_\infty\leq\|\overline{x}\|_\infty+\|(A_{\overline{S}}^{\top}A_{\overline{S}})^{-1}\|\|A_{\overline{S}}^{\top}e\|\nonumber\\
  &\leq\|\overline{x}\|_\infty +\frac{\sqrt{r}}{m\sigma_{\!A}(r)}\|A_{\overline{S}}^{\top}e\|_\infty.\nonumber
 \end{align}
 Using $\|Ax^{\rm o}-b\|^2\le\|b\|^2$ yields that $\|Ax^{\rm o}\|\leq 2\|b\|$. Together with the definition of $\sigma_{\!A}(r)$ and $\|x^{\rm o}\|_0\leq r$, we have $\sqrt{m}\sigma_{\!A}(r)\|x^{\rm o}\|\leq 2\|b\|$, so that $\|x^{\rm o}\|_1\leq \sqrt{r}\|x^{\rm o}\|\leq \frac{2\sqrt{r}\|b\|}{\sqrt{m}\sigma_{\!A}(r)}\leq 0.8\widehat{M}$.
 Now using the expression of $x^{\rm o}$ results in
 \(
 \|\overline{x}-x^{\rm o}\|_1= \|(A_{\overline{S}}^\top A_{\overline{S}})^{-1}A_{\overline{S}}^\top e\|_1\leq\sqrt{r}\|(A_{\overline{S}}^\top A_{\overline{S}})^{-1}A_{\overline{S}}^\top e\|\leq\sqrt{r}\|(A_{\overline{S}}^\top A_{\overline{S}})^{-1}\|\|A_{\overline{S}}^\top e\|\leq \frac{r\|A_{\overline{S}}^\top e\|_\infty}{m\sigma_{\!A}(r)}.
 \)
 Then part (a) follows. By using the expression of $x^o$ and the first order optimality condition of \eqref{oracle-esti}, it is easy to obtain part (b).
 \end{proof}

 To close this part, we state a technical lemma. Since its proof is direct, we here omit it.
 \begin{lemma}\label{PreLemma}
 For any $M>0$ and $\varpi\in\mathbb{R}$, if $|a|\le M$, then  
 $|a|-|a+\varpi| \leq \min\{|\varpi|,2 M\!-|\varpi|\}$.
 \end{lemma}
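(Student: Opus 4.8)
The plan is to establish the two bounds $|a|-|a+\varpi|\le|\varpi|$ and $|a|-|a+\varpi|\le 2M-|\varpi|$ separately, and then take their minimum. Both follow directly from the triangle inequality applied in two complementary directions, so no case analysis on the sign of $\varpi$ or $a$ is needed.

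For the first bound, I would apply the reverse triangle inequality to the pair $a$ and $a+\varpi$: since $\big||a|-|a+\varpi|\big|\le|a-(a+\varpi)|=|\varpi|$, we immediately get $|a|-|a+\varpi|\le|\varpi|$. For the second bound, I would instead lower-bound $|a+\varpi|$ from below using $|a+\varpi|\ge|\varpi|-|a|$ (another form of the triangle inequality). This gives $-|a+\varpi|\le|a|-|\varpi|$, and hence
\[
|a|-|a+\varpi|\le|a|+\big(|a|-|\varpi|\big)=2|a|-|\varpi|\le 2M-|\varpi|,
\]
where the final step invokes the hypothesis $|a|\le M$.

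Combining the two displayed bounds yields $|a|-|a+\varpi|\le\min\{|\varpi|,\,2M-|\varpi|\}$, which is exactly the claim. Since each inequality is a one-line consequence of a standard form of the triangle inequality, there is no genuine obstacle here; the only thing to watch is using the correct orientation of the reverse triangle inequality for each of the two bounds. This is precisely why the paper marks the proof as direct and omits it.
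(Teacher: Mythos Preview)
Your proof is correct: both bounds follow in one line from the triangle inequality exactly as you describe, and taking the minimum gives the claim. The paper itself omits the proof entirely, stating only that it is direct, so your argument is precisely the kind of verification the authors had in mind.
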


\section{Proofs}\label{secB}

 \subsection{Proof of Proposition \ref{bound-feasol}}\label{app-prop1}
  \begin{proof}
 The proof is similar to that of \citep[][Theorem II.1]{Bi-iSCRA}, and we include it for completeness. Fix any vector $x$ such that $\|Ax-b\|\leq\sqrt{\|e\|^2+2m\lambda(1\!+\!\varsigma_0)\|\overline{x}\|_1}$ and pick any $k\in[r\!-\!1]_{+}$.
 Let $I=\operatorname{supp}(x,k)$ if $k>0$, otherwise $I=\emptyset$. It holds that
  \[
	A_{I^c} x_{I^c}=A\overline{x}-A_I x_I+(b-A\overline{x})+(Ax-b)
    =A_{\overline{S}} \overline{x}_{\overline{S}}-A_I x_I+(b-A\overline{x})+(Ax-b).
  \]
  Together with 
  $\|b-\!A\overline{x}\|=\|e\|$ and $\|Ax-b\|\leq\sqrt{\|e\|^2+2m\lambda(1\!+\!\varsigma_0)\|\overline{x}\|_1}$,
  it follows that
 \begin{align}\label{temp-ineq21}
  \|A_{I^c}x_{I^c}\|
   &\ge\|A_{\overline{S}} \overline{x}_{\overline{S}}-A_I x_I\|-\|b-A\overline{x}\|-\|Ax-b\|\nonumber\\
   &\ge\|(A_{\overline{S}\backslash I}\ \ A_{\overline{S}\cap I}\ \ A_{I\backslash\overline{S}})(\overline{x}_{\overline{S}\backslash I};\,(\overline{x}-x)_{\overline{S}\cap I};\, x_{I\backslash\overline{S}})\|-\|e\|-\|Ax-b\| \nonumber\\
   & \geq\sqrt{m}\,\sigma_{\!A}(r\!+\!k)\|\overline{x}_{\overline{S}\backslash I}\|-\|e\|-\sqrt{\|e\|^2+2m\lambda(1\!+\!\varsigma_0)\|\overline{x}\|_1}\nonumber\\
    & \geq\sqrt{m}\,\sigma_{\!A}(r\!+\!k)\sqrt{r-k}|\overline{x}|_r^\downarrow-\|e\|-\sqrt{\|e\|^2+2m\lambda(1\!+\!\varsigma_0)\|\overline{x}\|_1}>0
  \end{align}
  where 
  the third inequality is obtained by using $|\overline{S}\backslash I|+|\overline{S}\cap I|+|I\backslash\overline{S}|=|\overline{S}\cup I|\leq r+k$
  and the definition of $\sigma_{\!A}(r+k)$, the fourth one is due to $|\overline{S}\backslash I|\geq r-k$ and $\min_{i\in \overline{S}}|x_i|=|\overline{x}|_r^\downarrow$,
  and the last one is due to
$\sqrt{m}|\overline{x}|_{r}^{\downarrow}>
\frac{2\|e\|+\sqrt{2m\lambda(1+\varsigma_0)\|\overline{x}\|_1}}{\sigma_{\!A}(2r-1)}$ and $k\leq r-1$.
  Combining \eqref{temp-ineq21} with $\|A\|\|x_{I^c}\|\geq\|A_{I^c}x_{I^c}\|$ leads to 
  \(
  \|x_{I^c}\|\geq\frac{\sqrt{m}\sigma_{\!A}(r+k)\sqrt{r-k}\,|\overline{x}|_{r}^{\downarrow}-\|e\|-\sqrt{\|e\|^2+2m\lambda(1\!+\!\varsigma_0)\|\overline{x}\|_1}}{\|A\|}.
  \)
 Together with
$|x|_{k+1}^{\downarrow}\!=\!\left\|x_{I^c}\right\|_{\infty}\!\geq\! \frac{\left\|x_{I^c}\right\|}{\sqrt{n-k}}$, we have $|x|_{k+1}^{\downarrow}\ge\vartheta_k$ for all $k\in[r\!-\!1]_{+}$. This along with the definition of $\beta_k(\vartheta)$ implies that $\beta_k(\vartheta)\ge\vartheta_k$ for $k\in[r\!-\!1]$.
 Recall that \(
 \beta_{0}(\lambda)\geq\min\limits_{x\in\mathbb{R}^n}\big\{\|x\|_\infty\ \text { s.t.}\ \|Ax-b\|\le\!\sqrt{\|e\|^2+2m\lambda(1\!+\!\varsigma_0)\|\overline{x}\|_1}\big\}.
 \)
 Then $\beta_0(\vartheta)\ge\vartheta_0$ holds. 
 \end{proof}

 \subsection{Proof of Proposition \ref{REC-rNSP}}\label{app-3-3}

  \begin{proof}
  Suppose that \eqref{REC-ineq} holds with constant $c>1$.
  Fix any $S\subset[n]$ with $|S|=r$ and any $d\in\mathbb{R}^n$.
  If $\|d_{S^c}\|_1> c\|d_S\|_1$, we have $\sum_{i\in S}|d_i|\le \frac{1}{c}\,\sum_{i\in S^c}|d_i|$.
  This implies that \eqref{ineq1-NSP} holds with $\gamma=\frac{1}{c}$. Now, suppose that $\|d_{S^c}\|_1\leq c\|d_S\|_1$.
  From \eqref{REC-ineq}, we immediately obtain
  \[
  \frac{1}{\sqrt{m}}\|Ad\|\geq \chi(c)\|d\|\geq \chi(c)\|d_S\|\geq \frac{\chi(c)}{\sqrt{r}}\|d_S\|_1.
  \]
 This implies that $\|d_S\|_1\leq \frac{\sqrt{r}}{\chi(c)\sqrt{m}}\|Ad\|$, so \eqref{ineq1-NSP} holds with $\tau=\frac{1}{\chi(c)}$, and the robust NSP of order $r$ holds with $\gamma=\frac{1}{c}$ and $\tau=\frac{1}{\chi(c)}$. The second part follows Remarks \ref{RNSP-remark1} and \ref{SRNSP-remark1}.
 \end{proof}

\section{Proofs for the main results of Section \ref{sec4.2}}\label{secC}

\subsection{Proof of Lemma \ref{bound-xhat1}}\label{app-4-1}
\begin{proof} 
  Since $\sigma_{\!A}(r)>0$ and $|\overline{S}|=r$,
  the constants $\kappa$ and $\widehat{M}$ are well defined.
  Pick any optimal solution $\widehat{x}$ of problem \eqref{equa-JTL1}. 
 From the feasibility of $x=\textbf{0}$ to \eqref{equa-JTL1}, 
  \begin{align}\label{boundx0}
  \frac{1}{2m}\|A\widehat{x}-b\|^2+\lambda\sum_{i\in J^c}|\widehat{x}_i|-\lambda\langle \xi, \widehat{x}\rangle
  &\le\frac{1}{2m}\|b\|^2.
  \end{align}
  When $J=\emptyset$, from \eqref{boundx0} and $\|\xi\|_\infty<\frac{1}{5}$, we have
  \(
  \lambda(1-\|\xi\|_\infty)\|\widehat{x}\|_1\leq\frac{\|b\|^2}{2m},
  \)
  which implies that
  \(
  \|\widehat{x}\|_1\leq \frac{\|b\|^2}{2m\lambda(1-\|\xi\|_\infty)}
  < \frac{5\|b\|^2}{8m\lambda}<\widehat{M}.
  \)
 The rest only focuses on the case that $J\neq\emptyset$. Since $\sigma_{\!A}(r)>0$, we have ${\rm rank}(A_J)=|J|$.
 From $A_J\widehat{x}_J=A\widehat{x}-A_{J^c}\widehat{x}_{J^c}$, 
 it follows that   
 \(
  	\widehat{x}_J=(A_J^{\top}A_J)^{-1}A_J^{\top}(A\widehat{x}-A_{J^c}\widehat{x}_{J^c}).
   \)
  Together with the definition of $\kappa$ and $\|A\widehat{x}-b\|\leq\|b\|$ due to \eqref{boundx0}, it holds that
  \begin{align}\label{phatx2}
  \|\widehat{x}_J\|_1
  &\leq \|(A_J^{\top}A_J)^{-1}A_J^{\top} A_{J^c}\widehat{x}_{J^c}\|_1
  		+\|(A_J^{\top}A_J)^{-1}A_J^{\top}A\widehat{x}\|_1\nonumber\\
  &\le \sum_{j\in J^c}\| (A_J^{\top}A_J)^{-1}A_J^{\top}A_{j}\widehat{x}_{j}\|_1
  		+\sqrt{r}\|(A_J^{\top}A_J)^{-1}A_J^{\top}A\widehat{x}\|\nonumber\\
  &\le \kappa\sum_{j\in J^c}|\widehat{x}_{j}|
  		+\sqrt{r}\|(A_J^{\top}A_J)^{-1}A_J^{\top}\|\|A\widehat{x}\|\nonumber\\
  &\leq \kappa\|\widehat{x}_{J^c}\|_1+\frac{2\sqrt{r}\|b\|}{\sqrt{m}\sigma_{\!A}(r)},
  \end{align}
  where the fourth inequality is due to Lemma \ref{LemmaAJ} and $\|A\widehat{x}\|\le 2\|b\|$. Then, it holds that
  \begin{equation}\label{phatx2L1}
    \|\widehat{x}\|_1\!=\!\|\widehat{x}_J\|_1+\|\widehat{x}_{J^c}\|_1      \leq(1+\kappa)\|\widehat{x}_{J^c}\|_1+\frac{2\sqrt{r}\|b\|}{\sqrt{m}\sigma_{\!A}(r)}.
  \end{equation}
  In addition, using \eqref{boundx0} again yields that $\|\widehat{x}_{J^c}\|_1-\|\xi\|_\infty\|\widehat{x}_{J^c}\|_1-\|\xi\|_\infty\|\widehat{x}_{J}\|_1\leq \frac{\|b\|^2}{2m\lambda}$,
  which along with \eqref{phatx2} and  $(1+\kappa)\|\xi\|_\infty\leq\frac{1}{5}$ implies that
 \begin{equation*}
  \|\widehat{x}_{J^c}\|_1
 \le \frac{2\sqrt{r}\|b\|\|\xi\|_\infty}{\sqrt{m}\sigma_{\!A}(r)(1-\!\|\xi\|_\infty-\!\kappa\|\xi\|_\infty)}
         +\frac{5\|b\|^2}{8m\lambda}
  \leq\frac{\sqrt{r}\|b\|}{2\sqrt{m}\sigma_{\!A}(r)}+\frac{5\|b\|^2}{8m\lambda}.\nonumber
 \end{equation*}
 Together with the above \eqref{phatx2L1}, we obtain
  \(
  \|\widehat{x}\|_1\leq \widehat{M}.
  \)
  The proof is completed.
 \end{proof}

 \subsection{Proof of Proposition \ref{prop1-nearly}}\label{app-4-2}

 \begin{proof}
 It suffices to consider that $\{j\in J^c\ |\ |\widehat{x}_j|\ge\eta\}\ne\emptyset$.
 Let $d:=\widehat{x}-x^{\rm o}$ where $x^{\rm o}$ is the oracle estimator defined by \eqref{oracle-esti}.
 Then, by Lemma \ref{Lemma-oracle}, $b-Ax^{\rm o}=(E-A_{\overline{S}}(A_{\overline{S}}^\top A_{\overline{S}})^{-1}A_{\overline{S}}^\top) e$, $x^{\rm o}_{\overline{S}^c}=\textbf{0}$, $\|x^{\rm o}\|_1\leq 0.8\widehat{M}$ and $d_{\overline{S}^c}=\widehat{x}_{\overline{S}^c}$. By Lemma \ref{bound-xhat1}, $\|\widehat{x}\|_1\leq\widehat{M}$. From the optimality of $\widehat{x}$ to \eqref{equa-JTL1}, the feasibility of $x^{\rm o}$ to \eqref{equa-JTL1}, and $J^{c}=\overline{S}^c\cup[\overline{S}\backslash J]$, it  follows that
 \begin{align}
  &\frac{1}{2m\lambda}\|A\widehat{x}-b\|^2+\sum_{i\in\overline{S}\backslash J}|\widehat{x}_i|+\sum_{i \in \overline{S}^c}|\widehat{x}_i|
    -\langle \xi, \widehat{x}\rangle
   \leq \frac{1}{2m\lambda}\|Ax^{\rm o}-b\|^2+\sum_{i \in \overline{S}\backslash J}|x^{\rm o}_i| -\langle \xi, x^{\rm o}\rangle\nonumber\\
 \Rightarrow\
 &\sum_{i\in\overline{S}^c}|d_i|-\langle \xi_{\overline{S}^c}, d_{\overline{S}^c}\rangle
 \leq \sum_{i \in \overline{S}\backslash J}(|x^{\rm o}_i|-|\widehat{x}_i|)
   +\langle \xi_{\overline{S}}, d_{\overline{S}}\rangle +\frac{\|Ax^{\rm o}-b\|^2}{2m\lambda}-\frac{\|A\widehat{x}-b\|^2}{2m\lambda}\nonumber\\
 \Rightarrow\
 & (1-\|\xi\|_\infty)\sum_{i \in \overline{S}^c}|d_i|
   \le\sum_{i \in \overline{S}\backslash J}(|x^{\rm o}_i|-|\widehat{x}_i|)
    +\| \xi\|_\infty \|d_{\overline{S}}\|_1+\frac{1}{m\lambda}\langle Ad, b-Ax^{\rm o}\rangle-\frac{\|Ad\|^2}{2m\lambda}\nonumber\\
 \Rightarrow\
 & (1-\|\xi\|_\infty)\sum_{i \in \overline{S}^c}|d_i|
   \leq \sum_{i \in \overline{S}\backslash J}(|x^{\rm o}_i|-|\widehat{x}_i|)
   +1.8\widehat{M}\| \xi\|_\infty+\frac{1}{m\lambda}\langle d_{\overline{S}^c}, A_{\overline{S}^c}^\top(b-Ax^{\rm o})\rangle-\frac{\|Ad\|^2}{2m\lambda}\nonumber\\
    \Rightarrow\
 & (1-\|\xi\|_\infty)\sum_{i \in \overline{S}^c}|d_i|
   \leq \sum_{i \in \overline{S}\backslash J}(|x^{\rm o}_i|-|\widehat{x}_i|)
   +1.8\widehat{M}\| \xi\|_\infty+\frac{\|A_{\overline{S}^c}^\top(b-Ax^{\rm o})\|_\infty}{m\lambda}\| d_{\overline{S}^c}\|_1-\frac{\|Ad\|^2}{2m\lambda},\nonumber
  \end{align}
  where the third implication is due to $\|d_{\overline{S}}\|_1\leq \|\widehat{x}\|_1+\|x^{\rm o}\|_1\leq 1.8\widehat{M}$ and $A_{\overline{S}}^\top(b-Ax^{\rm o})=\textbf{0}$. Recall that $\lambda\geq \frac{2\|A_{\overline{S}^c}(I-A_{\overline{S}}(A_{\overline{S}}^\top A_{\overline{S}})^{-1}A_{\overline{S}}^\top) e\|_\infty}{m(1-\rho)}=\frac{2\|A_{\overline{S}^c}^\top(b-Ax^{\rm o})\|_\infty}{m(1-\rho)}$, where the equality is due to Lemma \ref{Lemma-oracle} (ii). Then, $\frac{\|A_{\overline{S}^c}^\top(b-Ax^{\rm o})\|_\infty}{m\lambda}\| d_{\overline{S}^c}\|_1\le\frac{1-\rho}{2}\| d_{\overline{S}^c}\|_1$. The above inequality implies that  
  \[
    (1\!-\!\|\xi\|_\infty-\frac{1-\rho}{2})\sum_{i\in\overline{S}^c}|d_i|
   \leq \sum_{i \in \overline{S}\backslash J}(|x^{\rm o}_i|-|\widehat{x}_i|)
   +1.8\widehat{M}\| \xi\|_\infty-\frac{\|Ad\|^2}{2m\lambda}.
  \]
  By Lemma \ref{Lemma-oracle} (i) and the definition of $M$, we have $M\ge\|x^{\rm o}\|_\infty$. Using Lemma \ref{PreLemma} with $a=\overline{x}_i$ and $\varpi=d_i$ for $i\in \overline{S}\backslash J$ and combining with the above inequality leads to
  \begin{align}\label{psa1-ine}
  (1\!-\!\|\xi\|_\infty-\frac{1-\rho}{2})\sum_{i\in\overline{S}^c}|d_i|
  \!\le\!\sum_{i \in \overline{S}\backslash J} \min\{|d_i|,\,2 M-|d_i|\}+1.8\widehat{M}\| \xi\|_\infty
  -\frac{\|Ad\|^2}{2m\lambda}.
  \end{align}
  
 Suppose on the contrary that $\{j\in J^c\,|\,|\widehat{x}_j|\ge\eta\}\subset\overline{S}$ does not hold. There exists at least one index $l\in J^c$ with $|\widehat{x}_l|\geq\eta$ but $l\notin\overline{S}$, so  $\|d_{\overline{S}^c}\|_\infty=\|\widehat{x}_{\overline{S}^c}\|_{\infty}\geq \eta> \frac{18\widehat{M}\| \xi\|_\infty+5\lambda r\nu^2}{3(1-\rho)}$.
 Using the $(r\!-\!|J|,\eta)$-rRNSP($M,\rho,\nu$) of $A$,
 i.e. inequality \eqref{ineq1-RNSP} with $I=\overline{S}\backslash J$ and $S=\overline{S}$ for $d$ yields that
 \(
   \sum_{i \in \overline{S}\backslash J} \min\{|d_i|,\,2 M-|d_i|\}
    \le\rho\sum_{i \in \overline{S}^c}|d_i|+\nu\sqrt{\frac{r}{m}}\|Ad\|.
 \)
 Along with \eqref{psa1-ine}, 
  \begin{align}
  \big(\frac{1\!-\!\rho}{2}\!-\!\|\xi\|_\infty\big)\sum_{i \in \overline{S}^c}|d_i|
   \le 1.8\widehat{M}\| \xi\|_\infty+\nu\sqrt{\frac{r}{m}}\|Ad\|-\frac{1}{2m\lambda}\|Ad\|^2\le 1.8\widehat{M}\| \xi\|_\infty+\frac{\lambda r\nu^2}{2}.\nonumber
  \end{align}
  Note that $\|\xi\|_\infty<\frac{1-\rho}{5}$ and $\sum_{i\in\overline{S}^c}\left|d_i\right|\geq\|d_{\overline{S}^c}\|_\infty
  \geq\eta>0$. Then, it holds that 
  \[
   \frac{3(1\!-\!\rho)}{10}\eta\leq \big(\frac{1\!-\!\rho}{2}\!-\!\|\xi\|_\infty\big)\eta\leq\big(\frac{1\!-\!\rho}{2}\!-\!\|\xi\|_\infty\big)\|d_{\overline{S}^c}\|_\infty
    \leq 1.8\widehat{M}\| \xi\|_\infty+\frac{\lambda r\nu^2}{2},
  \]
  which contradicts to $\eta>\frac{18\widehat{M}\| \xi\|_\infty+5\lambda r\nu^2}{3(1-\rho)}$. Thus, the desired inclusion holds.

 For the second part, by noting that $\min\{|d_i|,2M-|d_i|\}\le M$ for each $i\in[n]$, from the above \eqref{psa1-ine} and $\|\xi\|_\infty<\frac{1-\rho}{5}<\frac{1}{5}$, it immediately follows that
 \begin{equation}
  \sum_{i \in \overline{S}^c}\left|d_i\right|\le\frac{10}{3}\big[(r-|J|)M+1.8\widehat{M}\| \xi\|_\infty\big].\nonumber
 \end{equation}
  Noting that $J\subsetneqq\overline{S}$ and $\{j\in J^c\ |\ |\widehat{x}_j|\ge\eta\}\subset\overline{S}$,
 we have $|\widehat{x}_j|<\eta$ for all $j\in \overline{S}^c$, which implies that
 \(
 \sum_{i \in \overline{S}^c}|d_i|
 =\sum_{i \in \overline{S}^c}|\widehat{x}_i|
 \leq (n-r)\eta.
 \)
Along with the above inequality, we have
 \begin{equation}\label{temp-ineq51-noise}
  \sum_{i \in \overline{S}^c}\left|d_i\right|\le \min\Big(\frac{10}{3}\big[(r-|J|)M+1.8\widehat{M}\| \xi\|_\infty\big], \ (n-r)\eta\Big).
  \end{equation}
  Note that ${\rm rank}(A_{\overline{S}})=r$ because $\sigma_{\!A}(r)>0$ and $|\overline{S}|=r$. Using $A_{\overline{S}}\widehat{x}_{\overline{S}}-A_{\overline{S}}x^{\rm o}_{\overline{S}}
     =-A_{\overline{S}^c}\widehat{x}_{\overline{S}^c}
     +(A\widehat{x}-Ax^{\rm o})$ leads to $\widehat{x}_{\overline{S}}-x^{\rm o}_{\overline{S}}
   =-(A_{\overline{S}}^{\top}A_{\overline{S}})^{-1}A_{\overline{S}}^{\top}A_{\overline{S}^c}\widehat{x}_{\overline{S}^c} +(A_{\overline{S}}^{\top}A_{\overline{S}})^{-1}A_{\overline{S}}^{\top}(A\widehat{x}-Ax^{\rm o})$.
  Then, 
  \begin{align}\label{ine-Tc-nearly-error}
  &\|\widehat{x}_{\overline{S}}-x^{\rm o}_{\overline{S}}\|_1
   \le    \|(A_{\overline{S}}^{\top}A_{\overline{S}})^{-1}A_{\overline{S}}^{\top}A_{\overline{S}^c}\widehat{x}_{\overline{S}^c}\|_1
     +\|(A_{\overline{S}}^{\top}A_{\overline{S}})^{-1}A_{\overline{S}}^{\top}Ad\|_1\nonumber\\
  &\qquad\quad\ \ \ \ \ \
  \le
  \sum_{j\in \overline{S}^c}\|(A_{\overline{S}}^{\top}A_{\overline{S}})^{-1}A_{\overline{S}}^{\top}A_{j}\widehat{x}_{j}\|_1
     +\sqrt{r}\|(A_{\overline{S}}^{\top}A_{\overline{S}})^{-1}A_{\overline{S}}^{\top}Ad\|\nonumber\\
  &\qquad\quad\ \ \ \ \ \ \leq
   \sum_{j\in \overline{S}^c}\|(A_{\overline{S}}^{\top}A_{\overline{S}})^{-1}A_{\overline{S}}^{\top}A_{j}\|_1|\widehat{x}_{j}|
   +\sqrt{r}\|(A_{\overline{S}}^{\top}A_{\overline{S}})^{-1}A_{\overline{S}}^{\top}\|\|Ad\|\nonumber\\
  &\qquad\quad\ \ \ \ \ \ \leq \kappa\|d_{\overline{S}^c}\|_1+\frac{\sqrt{r}\|Ad\|}{\sqrt{m}\sigma_A(r)},
 \end{align}
 where the last inequality is by the definition of $\kappa$ and Lemma \ref{LemmaAJ}. While from inequality \eqref{psa1-ine}, $\frac{\|Ad\|^2}{m}\leq 2\lambda((r-|J|)M+1.8\widehat{M}\| \xi\|_\infty)$. Substituting this inequality into \eqref{ine-Tc-nearly-error} leads to
 \[
 \|\widehat{x}_{\overline{S}}-x^{\rm o}_{\overline{S}}\|_1\leq \kappa\|d_{\overline{S}^c}\|_1+[\sigma_{\!A}(r)]^{-1}\sqrt{2r\lambda}\sqrt{(r-|J|)M+1.8\widehat{M}\| \xi\|_\infty},
 \]
 so that $\|d\|_1  \!=\!\|d_{\overline{S}}\|_1+\|d_{\overline{S}^c}\|_1       \leq(1\!+\!\kappa)\|d_{\overline{S}^c}\|_1+[\sigma_{\!A}(r)]^{-1}\sqrt{2r\lambda}\sqrt{(r-|J|)M+1.8\widehat{M}\| \xi\|_\infty}$.
 This, along with inequality \eqref{temp-ineq51-noise}, implies the desired result. The proof is completed.
 \end{proof}

 \subsection{Proof of Theorem \ref{theorem1-nearly}}\label{app-4-3}

  \begin{proof}
  By Assumption \ref{ass1} and the nonincreasing of $\{\varsigma_{k}\}_{k\in [\overline{k}-1]_+}$,
  for each $k\in[\overline{k}\!-\!1]_+$,
  \[
    \beta_{j}(\lambda)\geq\beta_{r-1}(\lambda)>\frac{18\widehat{M}\varsigma_0+5\lambda r\tau^2}{3\varrho(1-\gamma)}
    \geq\frac{18\widehat{M}\|\xi^{k}\|_\infty+5\lambda r\tau^2}{3\varrho(1-\gamma)}\quad\forall j\in[r\!-\!1]_{+},
  \]
  where the third inequality is due to $\|\xi^{k}\|_\infty\leq \varsigma_{k}\leq \varsigma_{0}$ and $\gamma\in(0, 1)$. We first prove that $I^{k}\subset\overline{S}$ for each $k\in[\overline{k}\!-\!1]$ by induction. By the definition of $\beta_{r-1}(\lambda)$, $\|x^{1}\|_\infty\geq\beta_{r-1}(\lambda)>\epsilon$, so $I^1$ is well defined and $\overline{k}\geq 2$.
  Since the $(r,\varrho\beta(\lambda))$-rSRNSP($M,\gamma,\tau$) of $A$ implies its
  $(r,\varrho\beta_0(\lambda))$-rRNSP($M,\gamma,\tau$),
  using Corollary \ref{Corollary-nearly-rho} with $J\!=(T^{0})^c=\emptyset$ and $\xi=\xi^{0}$ leads to $I^{1}=\{j\in J^c\ |\ |x^{1}_j|\geq\varrho\|x^{1}_{J^c}\|_\infty\} \subset\overline{S}$ and inequality \eqref{result421-noise} for $k=1$. Now suppose that $I^{k-1}\subset\overline{S}$ for all $k=2,\ldots,l$ with $2\le l\le \overline{k}$. 
  If $\bigcup_{j=1}^{l-1}I^{j}\ne\overline{S}$, then $\bigcup_{j=1}^{l-1}I^{j}\subsetneqq\overline{S}$. Since the $(r,\varrho\beta(\lambda))$-rSRNSP($M,\gamma,\tau$) of $A$ implies its $(r\!-\!|J|,\varrho\beta_{|J|}(\lambda))$-rRNSP($M,\gamma,\tau$), invoking Corollary \ref{Corollary-nearly-rho} with $J=\bigcup_{j=1}^{l-1}I^{j}\subsetneqq\overline{S}$ and $\xi=\xi^{l-1}$ leads to
 $I^{l}=\{j\in J^c\,|\,|x^{l}_j|\geq\varrho\|x^{l}_{J^c}\|_\infty\} \subset\overline{S}$ and inequality \eqref{result421-noise} for $k=l$. Consequently, $I^{k}\subset\overline{S}$ for each $k\in[\overline{k}\!-\!1]$, and $\bigcup_{k=1}^lI^{k}\subset\overline{S}$ for all $l\in[\overline{k}\!-\!1]$.
  Thus, it suffices to argue by contradiction that $\bigcup_{j=1}^{l-1}I^{j}\ne\overline{S}$ for all $l\in[\overline{k}\!-\!1]$. Suppose on the contrary that $\bigcup_{j=1}^{l-1}I^{j}=\overline{S}$. Then, 
 $T^{l-1}=(\bigcup_{j=1}^{l-1}I^{j})^c=\overline{S}^c$. Invoking Lemma \ref{bound-xhat1} with $J=(T^{l-1})^c$ and $\xi=\xi^{l-1}$ leads to $\|x^{l}\|_1\leq \widehat{M}$. From the feasibility of $x^{\rm o}$ to the $l$th subproblem and $x^{\rm o}_{\overline{S}^c}=\textbf{0}$, it follows that 
 \begin{align*}
  &\frac{1}{2m\lambda}\|Ax^l-b\|^2+\sum_{i\in\overline{S}^c}|x^{l}_i|-\langle \xi^{l-1}_{\overline{S}^c}, x^{l}_{\overline{S}^c}\rangle
   \le\frac{\|Ax^{\rm o}-b\|^2}{2m\lambda}
     -\langle \xi^{l-1}_{\overline{S}}, x^{\rm o}_{\overline{S}}-x^{l}_{\overline{S}}\rangle \nonumber\\
   \Rightarrow\
  &\sum_{i\in\overline{S}^c}|x^{l}_i|-\langle \xi^{l-1}_{\overline{S}^c}, x^{l}_{\overline{S}^c}\rangle
   \leq \|\xi^{l-1}\|_\infty\|x^{\rm o}_{\overline{S}}-x^{l}_{\overline{S}}\|_1
     +\frac{\|Ax^{\rm o}-b\|^2-\|Ax^l-b\|^2}{2m\lambda}\\
 \Rightarrow\
 & (1\!-\!\|\xi^{l-1}\|_\infty)\|x^{l}_{\overline{S}^c}\|_1
   \le \|\xi^{l-1}\|_\infty(\widehat{M}+\|x^o\|_1)+\frac{1}{m\lambda}\langle A(x^l\!-\!x^{\rm o}), b\!-\!Ax^{\rm o}\rangle-\frac{\|A(x^l\!-\!x^{\rm o})\|^2}{2m\lambda}.
  \end{align*} 
 Recall that $A_{\overline{S}}^\top(A_{\overline{S}}x^{\rm o}_{\overline{S}}-b)=\textbf{0}$ by Lemma \ref{Lemma-oracle}. Then, it holds that  
 \begin{align*}
  \frac{1}{m\lambda}\langle A(x^l\!-\!x^{\rm o}), b\!-\!Ax^{\rm o}\rangle=\frac{1}{m\lambda}\langle A_{\overline{S}^c}x^l_{\overline{S}^c}, b\!-\!Ax^{\rm o}\rangle
  \le\frac{\|A_{\overline{S}^c}^\top(b\!-\!Ax^{\rm o})\|_\infty}{m\lambda}\|x^l_{\overline{S}^c}\|_1\le \frac{1-\gamma}{2}\|x^l_{\overline{S}^c}\|_1
 \end{align*}
 where the last inequality is due to $\lambda \geq \frac{2\|A_{\overline{S}^c}^\top(I-A_{\overline{S}}(A_{\overline{S}}^\top A_{\overline{S}})^{-1}A_{\overline{S}}^\top) e\|_\infty}{m(1-\gamma)}=\frac{2\|A_{\overline{S}^c}^\top(b-Ax^{\rm o})\|_\infty}{m(1-\gamma)}$. From the above two inequalities, we obtain 
 $(1\!-\|\xi^{l-1}\|_\infty-\frac{1-\gamma}{2})\|x^{l}_{\overline{S}^c}\|_1\le \|\xi^{l-1}\|_\infty(\widehat{M}+\|x^o\|_1)$. Together with $\|\xi^{l-1}\|_{\infty}\le\varsigma_{l-1}\le\varsigma_0<\frac{1-\gamma}{5}$ and $0.8\widehat{M}\geq \|x^{\rm o}\|_1$, it follows that 
 \begin{equation}\label{ineq41-nearly}
\|x^{l}_{\overline{S}^c}\|_\infty\le\|x^{l}_{\overline{S}^c}\|_1\le \frac{1.8\widehat{M}\|\xi^{l-1}\|_\infty}{1\!-\!\|\xi^{l-1}\|_\infty-(1\!-\!\gamma)/2}
   \leq 6\widehat{M}\varsigma_{l-1}
   \leq\epsilon.
 \end{equation}
 This shows that Algorithm \ref{Alg1} stops at the $l$th iterate, so $l=\overline{k}$. On the other hand, $l<\overline{k}$ since $l\in[\overline{k}\!-\!1]$. Thus, we obtain a contradiction.
 
 Now we prove the first part. From \eqref{boundx0x}, $\|Ax^{l}-b\|\leq \sqrt{\|e\|^2+2m\lambda(1+\varsigma_0)\|\overline{x}\|_1}$ for each $l\in[\overline{k}]$. Along with the definition of $\beta_{r-1}(\lambda)$, for each $l\in[\overline{k}]$, $\|x^{l}\|_r^\downarrow\geq\beta_{r-1}(\lambda)>\epsilon$. Thus, for each $l\in[\overline{k}]$, if $(T^{l-1})^c=\bigcup_{k=1}^{l-1}I^{k}\subsetneqq\overline{S}$, then $\|x^{l}_{T^{l-1}}\|_\infty\geq\|x^{l}\|_r^\downarrow>\epsilon$. That is, for each $l\in[\overline{k}]$, Algorithm \ref{Alg1}
 does not stop at $l$th iteration if $(T^{l-1})^c=\bigcup_{k=1}^{l-1}I^{k}\neq\overline{S}$. This implies that  $\overline{k}$ is the smallest positive integer such that $(T^{\overline{k}-1})^c=\bigcup_{k=1}^{\overline{k}-1}I^{k}=\overline{S}$. Since $\|Ax^{\overline{k}}-b\|\leq \sqrt{\|e\|^2+2m\lambda(1+\varsigma_0)\|\overline{x}\|_1}$, for each $i\in {\rm supp}(x^{\overline{k}}, r)$,  $|x^{\overline{k}}_i|\ge|x^{\overline{k}}|_r^{\downarrow}\ge\beta_{r-1}(\lambda)>0$. In addition, using the same arguments as those for inequality \eqref{ineq41-nearly} with $l=\overline{k}$ yields that
 \begin{equation}\label{ineq52-nearly}
  \|x^{\overline{k}}_{T^{\overline{k}-1}}\|_\infty
  \le\|x^{\overline{k}}_{T^{\overline{k}-1}}\|_1
  \le 6\widehat{M}\varsigma_{\overline{k}-1}\le\epsilon<\beta_{r-1}(\lambda).
 \end{equation}
 The two sides imply that ${\rm supp}(x^{\overline{k}}, r)\subset(T^{\overline{k}-1})^c=\overline{S}$. Along with $|{\rm supp}(x^{\overline{k}}, r)|=r=|\overline{S}|$, we obtain that ${\rm supp}(x^{\overline{k}}, r)=T^{\overline{k}-1})^c=\overline{S}$.

 From Proposition \ref{finite-Alg1}, for each $k\in[\overline{k}]$, $I^{k}\ne\emptyset$ and $I^{k}\cap I^{l}=\emptyset$ for any $l\in[k\!-\!1]$. Along with $|\overline{S}|=r$ and $\bigcup_{k=1}^{\overline{k}-1}I^{k}=\overline{S}$, we deduce that $\overline{k}-1\leq r$, so $1<\overline{k}\leq r+1$, and \eqref{result421-noise} holds with $1\le k\le \overline{k}-1$ by the above arguments. The rest only needs to derive the error bound of $x^{\overline{k}}$ from $x^{\rm o}$.
 Using inequality \eqref{psa1-ine} with $J=(T^{\overline{k}-1})^c=\overline{S}$ yields that $\frac{\|A(x^{\overline{k}}-x^{\rm o})\|}{\sqrt{m}}\leq \sqrt{3.6\lambda\widehat{M}\varsigma_{\overline{k}-1}}$. Note that ${\rm rank}(A_{\overline{S}})=r$ due to $\sigma_{\!A}(r)>0$ and $|\overline{S}|=r$. Using the same arguments as for \eqref{ine-Tc-nearly-error}, we have
 \(
  \|x^{\rm o}_{\overline{S}}-x^{\overline{k}}_{\overline{S}}\|_1
  \leq \kappa\|x^{\overline{k}}_{\overline{S}^c}\|_1+\frac{\sqrt{r}\|A(x^{\overline{k}}-x^{\rm o})\|}{\sqrt{m}\sigma_{\!A}(r)}.
 \)
 Thus, 
 \begin{equation*}
\|x^{\overline{k}}-x^{\rm o}\|_1  \leq(1\!+\!\kappa)\|x^{\overline{k}}_{\overline{S}^c}\|_1+\frac{\sqrt{r}\|A(x^{\overline{k}}-x^{\rm o})\|}{\sqrt{m}\sigma_{\!A}(r)}\leq 6(1\!+\!\kappa)\widehat{M}\varsigma_{\overline{k}-1}+\frac{\sqrt{3.6\lambda r\widehat{M}\varsigma_{\overline{k}-1}}}{\sigma_{\!A}(r)},\nonumber
 \end{equation*}
 where the second inequality is due to \eqref{ineq52-nearly}.
  The desired inequality holds.
 \end{proof}

 \subsection{Proof of Theorem \ref{theorem2-nearly}}\label{app-4-4}

 \begin{proof}
 Let $\beta(\lambda)=\big\{\beta_0(\lambda),\ldots,\beta_{r-1}(\lambda)\big\}$. By Remark \ref{Rem-rSRNSP}, the $(r, \varrho\beta'(\lambda))$-rSRNSP($M,\gamma,\tau$) of $A$ implies its $(r,\varrho\beta(\lambda))$-rSRNSP($M,\gamma, \tau$). Then, the conditions of Theorem \ref{theorem1-nearly} hold, so $\overline{k}\in\{2,\ldots,r\!+\!1\}$ is the smallest positive integer such that $\bigcup_{k=1}^{\overline{k}-1}I^k=\overline{S}$. If $\bigcup_{j=1}^{\widehat{k}-1}I^{j}=\overline{S}$, it is not hard to deduce that $\widehat{k}=\overline{k}$, and the conclusions follow by Theorem \ref{theorem1-nearly}. Now suppose that $J:=\bigcup_{j=1}^{\widehat{k}-1}I^{j}\subsetneqq\overline{S}$. From the proof of Theorem \ref{theorem1-nearly}, Algorithm \ref{Alg1} does not stop at $(\widehat{k}\!-\!1)$th step. By using $\widetilde{r}\le|J|\le r\!-\!1$ and $\beta'_{|J|}(\lambda)=\beta_{r-1}(\lambda)$, it follows that the $(r,\varrho\beta'(\lambda))$-rSRNSP($M,\gamma, \tau$) of $A$
  implies its $(r\!-|J|,\varrho\beta_{r-1}(\lambda))$-rRNSP($M,\gamma,\tau$).
  Thus, by using Corollary \ref{corollary1-nearly} and the definition of $x^{\widehat{k}}$, we obtain ${\rm supp}(x^{\widehat{k}},r)=\overline{S}$ and \eqref{result422-noise}.

  For the second part, as $\beta'(\lambda)=\big\{\beta_0(\lambda),\beta_1(\lambda),\ldots, \beta_{\widetilde{r}-1}(\lambda),\underbrace{0,\ldots,0}_{r-\widetilde{r}}\big\}$, invoking Remark \ref{Rem-rSRNSP} and the first part of conclusions results in ${\rm supp}(x^{\widehat{k}},r)=\overline{S}$ and
  $J:=\bigcup_{j=1}^{\widehat{k}-1}I^{j}\subset\overline{S}$. Let $d^{\widehat{k}}=x^{\widehat{k}}\!-\!x^{\rm o}$. By the definition of $x^{\widehat{k}}$, using the same arguments as those for \eqref{psa1-ine} yields that
 \begin{align}\label{psa1-ine-nearly}
   (1-\|\xi^{\widehat{k}-1}\|_\infty-\frac{1-\gamma}{2})\sum_{i \in \overline{S}^c}|d_i^{\widehat{k}}|
  \le&\sum_{i \in \overline{S}\backslash J} \min\big\{|d_i^{\widehat{k}}|, 2M-|d_i^{\widehat{k}}|\big\}
  +1.8\widehat{M}\varsigma_{\widehat{k}-1}-\frac{\|Ad^{\widehat{k}}\|^2}{2m\lambda}.
  \end{align}
  If $J\subsetneqq\overline{S}$, we have $\widetilde{r}\le|J|\le r-1$ and $\beta'_{|J|}(\lambda)=0$. Then, the $(r,\varrho\beta'(\lambda))$-rSRNSP($M,\gamma,\tau$) of $A$ implies its $(r\!-|J|, 0)$-rRNSP($M,\gamma,\tau$). From \eqref{ineq1-RNSP} with $I=\overline{S}\backslash J,S=\overline{S}$ and \eqref{psa1-ine-nearly},
  \[
   (1-\|\xi^{\widehat{k}-1}\|_\infty-\frac{1-\gamma}{2})\sum_{i \in \overline{S}^c}|d^{\widehat{k}}_i|
     \leq \gamma\sum_{i \in \overline{S}^c}|d^{\widehat{k}}_i|+1.8\widehat{M}\varsigma_{\widehat{k}-1}+\tau\sqrt{\frac{r}{m}}\|Ad^{\widehat{k}}\|-\frac{1}{2m\lambda}\|Ad^{\widehat{k}}\|^2.
  \]
  Combining the above inequality with $\tau\sqrt{\frac{r}{m}}\|Ad^{\widehat{k}}\|-\frac{1}{2m\lambda}\|Ad^{\widehat{k}}\|^2\leq \frac{\lambda r\tau^2}{2}$ yields that
  \begin{equation}\label{ineq42-index-error-nearly}
  \sum_{i \in \overline{S}^c}|d^{\widehat{k}}_i|\leq \frac{1.8\widehat{M} \varsigma_{\widehat{k}-1}+\lambda r\tau^2/2}{(1-\gamma)/2-\|\xi^{\widehat{k}-1}\|_\infty}
  \leq\frac{18\widehat{M} \varsigma_{\widehat{k}-1}+5\lambda r\tau^2}{3(1-\gamma)},
  \end{equation}
  where the second inequality is due to $\|\xi^{\widehat{k}-1}\|_\infty\leq\varsigma_{\widehat{k}-1}\leq\varsigma_{0}<\frac{1-\gamma}{5}$.
  Following the same argument as those for $\|d\|_1    \leq(1+\kappa)\|d_{\overline{S}^c}\|_1+\frac{\sqrt{2r\lambda}}{\sigma_{\!A}(r)}\sqrt{(r\!-\!|J|)M+1.8\widehat{M}\| \xi\|_\infty}$
 in the proof of Proposition \ref{prop1-nearly}, we have 
  \(
  \|d^{\widehat{k}}\|_1
    \leq(1+\kappa)\|d^{\widehat{k}}_{\overline{S}^c}\|_1 +\frac{\sqrt{2r\lambda}}{\sigma_{\!A}(r)}\sqrt{(r-\widetilde{r})M+1.8\widehat{M}\varsigma_{\widehat{k}-1}}.
  \)
 Along with \eqref{ineq42-index-error-nearly}, we get the desired inequality. The second part follows.
 \end{proof}

\vskip 0.2in
\bibliography{iSCRA_JMLR}

\begin{thebibliography}{62}
\providecommand{\natexlab}[1]{#1}
\providecommand{\url}[1]{\texttt{#1}}
\expandafter\ifx\csname urlstyle\endcsname\relax
  \providecommand{\doi}[1]{doi: #1}\else
  \providecommand{\doi}{doi: \begingroup \urlstyle{rm}\Url}\fi

\bibitem[Ahn et~al.(2017)Ahn, Pang, and Xin]{DCPang}
M.~Ahn, J.~Pang, and J.~Xin.
\newblock Difference-of-convex learning: directional stationarity, optimality,
  and sparsity.
\newblock \emph{SIAM Journal on Optimization}, 27\penalty0 (3):\penalty0
  1637--1665, 2017.

\bibitem[Axiotis and Sviridenko(2021)]{Axiotis-IHT}
K.~Axiotis and M.~Sviridenko.
\newblock Sparse convex optimization via adaptively regularized hard
  thresholding.
\newblock \emph{Journal of Machine Learning Research}, 22\penalty0
  (122):\penalty0 1--47, 2021.

\bibitem[Beck and Teboulle(2009)]{Beck-APG}
A.~Beck and M.~Teboulle.
\newblock A fast iterative shrinkage-thresholding algorithm for linear inverse
  problems.
\newblock \emph{SIAM Journal on Imaging Sciences}, 2:\penalty0 183--202, 2009.

\bibitem[Bi et~al.(2024)Bi, Zhou, and Pan]{Bi-iSCRA}
S.~Bi, L.~Zhou, and S.~Pan.
\newblock Sparse signal reconstruction: sequential convex relaxation,
  restricted null space property and error bounds.
\newblock \emph{IEEE Transactions on Information Theory}, 70\penalty0
  (11):\penalty0 8378--8398, 2024.

\bibitem[Bickel et~al.(2009)Bickel, Ritov, and Tsybakov]{Bickel-Lasso}
P.~Bickel, Y.~Ritov, and A.~Tsybakov.
\newblock Simultaneous analysis of lasso and dantzig selector.
\newblock \emph{Annals of Statistics}, 37\penalty0 (4):\penalty0 1705--1732,
  2009.

\bibitem[Blumensath and Davies(2008)]{Blumensath2008Iterative}
T.~Blumensath and M.~Davies.
\newblock Iterative thresholding for sparse approximations.
\newblock \emph{Journal of Fourier Analysis and Applications}, 14\penalty0
  (5):\penalty0 629--654, 2008.

\bibitem[Blumensath and Davies(2009)]{Blumensath-IHT}
T.~Blumensath and M.~Davies.
\newblock Iterative hard thresholding for compressed sensing.
\newblock \emph{Applied and Computational Harmonic Analysis}, 27\penalty0
  (3):\penalty0 265--274, 2009.

\bibitem[Bunea et~al.(2007)Bunea, Tsybakov, and Wegkamp]{Bunea-Lasso}
F.~Bunea, A.~Tsybakov, and M.~Wegkamp.
\newblock Sparsity oracle inequalities for the lasso.
\newblock \emph{Electronic Journal of Statistics}, 1:\penalty0 169--194, 2007.

\bibitem[Cand\`{e}s and Plan(2009)]{Candes-Lasso}
E.~Cand\`{e}s and Y.~Plan.
\newblock Near-ideal model selection by $l_1$ minimization.
\newblock \emph{Annals of Statistics}, 37\penalty0 (5A):\penalty0 2145--2177,
  2009.

\bibitem[Cand\`{e}s et~al.(2008)Cand\`{e}s, Wakin, and Boyd]{Candesre}
E.~Cand\`{e}s, M.~Wakin, and S.~Boyd.
\newblock Enhancing sparsity by reweighted $l_1$ minimization.
\newblock \emph{Journal of Fourier Analysis and Applications}, 14:\penalty0
  877--905, 2008.

\bibitem[Chang and Lin(2011)]{LIBSVM}
C.~Chang and C.~Lin.
\newblock Libsvm: A library for support vector machines.
\newblock \emph{ACM Transactions on Intelligent Systems and Technology},
  2\penalty0 (3):\penalty0 27, 2011.

\bibitem[Facchinei and Pang(2003)]{Pang03}
F.~Facchinei and J.~S. Pang.
\newblock Finite-dimensional variational inequalities and complementarity
  problems.
\newblock \emph{Springer, New York}, 2003.

\bibitem[Fan and Li(2001)]{Fan-SCAD}
J.~Fan and R.~Li.
\newblock Variable selection via nonconcave penalized likehood and its oracle
  properties.
\newblock \emph{Journal of the American Statistical Association}, 96:\penalty0
  1348--1360, 2001.

\bibitem[Fan et~al.(2014)Fan, Xue, and Zou]{Fan-FCP}
J.~Fan, L.~Xue, and H.~Zou.
\newblock Strong orale optimality of folded concave penalized estimation.
\newblock \emph{Annals of Statistics}, 42\penalty0 (3):\penalty0 819--849,
  2014.

\bibitem[Fan et~al.(2018)Fan, Liu, Sun, and Zhang]{Fan-LAMM}
J.~Fan, H.~Liu, Q.~Sun, and T.~Zhang.
\newblock \text{I-LAMM} for sparse learning: simultaneous control of
  algorithmic complexity and statistical error.
\newblock \emph{Annals of Statistics}, 46\penalty0 (2):\penalty0 814--841,
  2018.

\bibitem[Foucart and Rauhut(2013)]{Foucart-CS}
S.~Foucart and H.~Rauhut.
\newblock \emph{A mathematical introduction to compressive sensing}.
\newblock Springer, New York Heidelberg Dordrecht London, 2013.

\bibitem[G\'{o}mez et~al.(2023)G\'{o}mez, He, and Pang]{GPcap}
A.~G\'{o}mez, Z.~He, and J.~Pang.
\newblock Linear-step solvability of some folded concave and singly-parametric
  sparse optimization problems.
\newblock \emph{Mathematical Programming}, 198\penalty0 (2):\penalty0
  1339--1380, 2023.

\bibitem[Gong et~al.(2013)Gong, Zhang, Lu, Huang, and Ye]{Gong-PG}
P.~Gong, C.~Zhang, Z.~Lu, J.~Huang, and J.~Ye.
\newblock A general iterative shrinkage and thresholding algorithm for
  non-convex regularized optimization problems.
\newblock \emph{In Proceedings of the 30th International Conference on Machine
  Learning}, pages 37--45, 2013.

\bibitem[Gotoh et~al.(2018)Gotoh, Takeda, and Tono]{DCsp}
J.~Gotoh, A.~Takeda, and K.~Tono.
\newblock \text{DC} formulations and algorithms for sparse optimization
  problems.
\newblock \emph{Mathematical Programming}, 169\penalty0 (1):\penalty0 141--176,
  2018.

\bibitem[He et~al.(2022)He, He, Liu, and Wen]{WenL1-L2}
Z.~He, H.~He, X.~Liu, and J.~Wen.
\newblock An improved sufficient condition for sparse signal pecovery with
  minimization of \text{L1}-\text{L2}.
\newblock \emph{IEEE Signal Processing Letters}, 29:\penalty0 907--911, 2022.

\bibitem[Hiriart-Urruty et~al.(1984)Hiriart-Urruty, Strodiot, and
  Nguyen]{Hiriart84}
J.~B. Hiriart-Urruty, J.~J. Strodiot, and V.~H. Nguyen.
\newblock Generalized hessian matrix and second-order optimality conditions for
  problems with $\text{C}^{1,1}$ data.
\newblock \emph{Applied Mathematics and Optimization}, 11:\penalty0 43--56,
  1984.

\bibitem[Huang et~al.(2010)Huang, Jia, Yu, Chun, Maniatis, and
  Naik]{Huang-NIPS}
L.~Huang, J.~Jia, B.~Yu, B.~G. Chun, P.~Maniatis, and M.~Naik.
\newblock Predicting execution time of computer programs using sparse
  polynomial regression.
\newblock \emph{In Advances in Neural Information Processing Systems}, pages
  883–--891, 2010.

\bibitem[Knight and Fu(2000)]{Knight-Lasso}
K.~Knight and W.~Fu.
\newblock Asymptotics for lasso-type estimators.
\newblock \emph{Annals of Statistics}, 28\penalty0 (5):\penalty0 1356--1378,
  2000.

\bibitem[Kogan et~al.(2009)Kogan, Levin, Routledge, Sagi, and Smith]{Corpus}
S.~Kogan, D.~Levin, B.~R. Routledge, J.~S. Sagi, and N.~A. Smith.
\newblock Predicting risk from financial reports with regression.
\newblock \emph{Proceedings of Human Language Technologies: The 2009 Annual
  Conference of the North American Chapter of the Association for Computational
  Linguistics}, pages 272--280, 2009.

\bibitem[Lahiri(2021)]{Lahiri-Lasso}
S.~N. Lahiri.
\newblock Necessary and sufficient conditions for variable selection
  consistency of the lasso in high dimensions.
\newblock \emph{Annals of Statistics}, 49\penalty0 (2):\penalty0 820--844,
  2021.

\bibitem[Lai and Wang(2011)]{LMJ-Lp}
M.~Lai and J.~Wang.
\newblock An unconstrained $l_q$ minimization for sparse solution of
  underdetermined linear systems.
\newblock \emph{SIAM Journal on Optimization}, 21\penalty0 (1):\penalty0
  82--101, 2011.

\bibitem[Lederer and Vogt(2021)]{Lederer-Lasso}
J.~Lederer and M.~Vogt.
\newblock Estimating the lasso's effective noise.
\newblock \emph{Journal of Machine Learning Research}, 22\penalty0
  (276):\penalty0 1--32, 2021.

\bibitem[Li and Lin(2015)]{Lin-APG}
H.~Li and Z.~Lin.
\newblock Accelerated proximal gradient methods for nonconvex programming.
\newblock \emph{In Advances in Neural Information Processing Systems}, pages
  379--387, 2015.

\bibitem[Li et~al.(2018)Li, Sun, and Toh]{SDFLasso}
X.~Li, D.~Sun, and K.~Toh.
\newblock A highly efficient semismooth netwon augmented lagrangian method for
  solving lasso problems.
\newblock \emph{SIAM Journal on Optimization}, 28\penalty0 (1):\penalty0
  433--458, 2018.

\bibitem[Lichman(2013)]{UCI}
M.~Lichman.
\newblock \text{UCI} machine learning repository.
\newblock \emph{http://archive.ics.uci.edu/ml/datasets.html}, 2013.

\bibitem[Loh and Wainwright(2015)]{Loh15}
P.~Loh and M.~Wainwright.
\newblock Regularized \text{M}-estimators with nonconvexity: statistical and
  algorithmic theory for local optima.
\newblock \emph{Journal of Machine Learning Research}, 16\penalty0
  (19):\penalty0 559--616, 2015.

\bibitem[Loh and Wainwright(2017)]{Loh17}
P.~Loh and M.~Wainwright.
\newblock Support recovery without incoherence: a case for nonconvex
  regularization.
\newblock \emph{Annals of Statistics}, 45\penalty0 (6):\penalty0 2455--2482,
  2017.

\bibitem[Lu(2014)]{LuZS-Lp}
Z.~Lu.
\newblock Iterative reweighted minimization methods for $l_p$ regularized
  unconstrained nonlinear programming.
\newblock \emph{Mathematical Programming}, 147:\penalty0 277--307, 2014.

\bibitem[Meinshausen and B\"{u}hlmann(2006)]{Meinshausen-Lasso}
N.~Meinshausen and P.~B\"{u}hlmann.
\newblock High dimensional graphs and variable selection with the lasso.
\newblock \emph{Annals of Statistics}, 34\penalty0 (3):\penalty0 1436--1462,
  2006.

\bibitem[Meng and Zhao(2020)]{Meng-IHT}
N.~Meng and Y.~Zhao.
\newblock Newton-step-based hard thresholding algorithms for sparse signal
  recovery.
\newblock \emph{IEEE Transactions on Signal Processing}, 68:\penalty0
  6594--6606, 2020.

\bibitem[Negahban et~al.(2012)Negahban, Ravikumar, Wainwright, and
  Yu]{Negahban-Lasso}
S.~Negahban, P.~Ravikumar, M.~Wainwright, and B.~Yu.
\newblock A unified framework for high-dimensional analysis of
  \text{M}-estimators with decomposable regularizers.
\newblock \emph{Statistical Science}, 27\penalty0 (4):\penalty0 538--557, 2012.

\bibitem[Pan et~al.(2023)Pan, Liang, and Liu]{PanLiu22}
S.~Pan, L.~Liang, and Y.~Liu.
\newblock Local optimality for stationary points of group zero-norm regularized
  problems and equivalent surrogates.
\newblock \emph{Optimization}, 72\penalty0 (9):\penalty0 2311--2343, 2023.

\bibitem[Raskutti et~al.(2010)Raskutti, Wainwright, and Yu]{SEC-NSP}
G.~Raskutti, M.~Wainwright, and B.~Yu.
\newblock Restricted eigenvalue properties for correlated gaussian designs.
\newblock \emph{Journal of Machine Learning Research}, 11\penalty0
  (78):\penalty0 2241--2259, 2010.

\bibitem[Taheri et~al.(2023)Taheri, Lim, and Lederer]{Taheri-Lasso}
M.~Taheri, N.~Lim, and J.~Lederer.
\newblock Balancing statistical and computational precision: a general theory
  and applications to sparse regression.
\newblock \emph{IEEE Transactions on Information Theory}, 69\penalty0
  (1):\penalty0 316--333, 2023.

\bibitem[Tao(2022)]{TML1/L2}
M.~Tao.
\newblock Minimization of $l_1$ over $l_2$ for sparse signal recovery with
  convergence guarantee.
\newblock \emph{SIAM Journal on Scientific Computing}, 44\penalty0
  (2):\penalty0 A770--A797, 2022.

\bibitem[Thi and Dinh(2018)]{DC30}
H.~Le Thi and T.~Pham Dinh.
\newblock \text{DC} programming and dca: thirty years of developments.
\newblock \emph{Mathematical Programming}, 169\penalty0 (1):\penalty0 5--68,
  2018.

\bibitem[Tibshirani(1996)]{Tibshirani-Lasso}
R.~Tibshirani.
\newblock Regression shrinkage and selection via the lasso.
\newblock \emph{Journal of the Royal Statistical Society: Series B (Statistical
  Methodology)}, 58:\penalty0 267--288, 1996.

\bibitem[Tillmann et~al.(2024)Tillmann, Bienstock, Lodi, and
  Schwartz]{Survey-OPTZN}
A.~Tillmann, D.~Bienstock, A.~Lodi, and A.~Schwartz.
\newblock Cardinality minimization, constraints, and regularization: a survey.
\newblock \emph{SIAM Review}, 66\penalty0 (3):\penalty0 403--477, 2024.

\bibitem[van~de Geer and B\"{u}hlmann(2009)]{van-Lasso}
S.~van~de Geer and P.~B\"{u}hlmann.
\newblock On the conditions used to prove oracle results for the lasso.
\newblock \emph{Electronic Journal of Statistics}, 3:\penalty0 1360--1392,
  2009.

\bibitem[Wainwright(2009)]{Wainwright-Lasso}
M.~Wainwright.
\newblock Sharp thresholds for noisy and high-dimensional recovery of sparsity
  using $l_1$-constrained quadratic programming (lasso).
\newblock \emph{IEEE Transactions on Information Theory}, 55\penalty0
  (5):\penalty0 2183--2202, 2009.

\bibitem[Ye and Zhang(2010)]{Ye-Lasso}
F.~Ye and C.~Zhang.
\newblock Rate minimaxity of the lasso and dantzig selector for the $l_q$ loss
  in $l_r$ balls.
\newblock \emph{Journal of Machine Learning Research}, 11\penalty0
  (114):\penalty0 3519--3540, 2010.

\bibitem[Yin et~al.(2015)Yin, Lou, He, and Xin]{XinL1-L2}
P.~Yin, Y.~Lou, Q.~He, and J.~Xin.
\newblock Minimization of $l_{1-2}$ for compressed sensing.
\newblock \emph{SIAM Journal on Scientific Computing}, 37\penalty0
  (1):\penalty0 A536--A563, 2015.

\bibitem[Yuan and Li(2022)]{Yuan-c}
X.~Yuan and P.~Li.
\newblock Stability and risk bounds of iterative hard thresholding.
\newblock \emph{IEEE Transactions on Information Theory}, 68\penalty0
  (10):\penalty0 6663--6681, 2022.

\bibitem[Yuan et~al.(2020)Yuan, Liu, Wang, Liu, and Metaxas]{Yuan-b}
X.~Yuan, B.~Liu, L.~Wang, Q.~Liu, and D.~Metaxas.
\newblock Dual iterative hard thresholding.
\newblock \emph{Journal of Machine Learning Research}, 21\penalty0
  (152):\penalty0 1--50, 2020.

\bibitem[Yuan et~al.(2023)Yuan, Lin, Sun, and Toh]{YuanCS}
Y.~Yuan, M.~Lin, D.~Sun, and K.~Toh.
\newblock Adaptive sieving: a dimension reduction technique for sparse
  optimization problems.
\newblock \emph{arXiv:2306:17369}, 2023.

\bibitem[Zeng et~al.(2021)Zeng, Yu, and Pong]{PTKL1/L2}
L.~Zeng, P.~Yu, and T.~Pong.
\newblock Analysis and algorithms for some compressed sensing models based on
  \text{L1/L2} minimization.
\newblock \emph{SIAM Journal on Optimization}, 31\penalty0 (2):\penalty0
  1576--1603, 2021.

\bibitem[Zhang(2010{\natexlab{a}})]{Zhang-MCP}
C.~Zhang.
\newblock Nearly unbiased variable selection under minimax concave penalty.
\newblock \emph{Annals of Statistics}, 38\penalty0 (2):\penalty0 894--942,
  2010{\natexlab{a}}.

\bibitem[Zhang and Huang(2008)]{ZCH-Lasso}
C.~Zhang and J.~Huang.
\newblock The sparsity and bias of the lasso selection in high-dimensional
  linear regression.
\newblock \emph{Annals of Statistics}, 36\penalty0 (4):\penalty0 1567--1594,
  2008.

\bibitem[Zhang and Zhang(2012)]{Zhang-L0}
C.~Zhang and T.~Zhang.
\newblock A general theory of concave regularization for high-dimensional
  sparse estimation problems.
\newblock \emph{Statistical Science}, 27\penalty0 (4):\penalty0 576--593, 2012.

\bibitem[Zhang and Xin(2018)]{TL1}
S.~Zhang and J.~Xin.
\newblock Minimization of transformed $l_1$ penalty: theory, difference of
  convex function algorithm, and robust application in compressed sensing.
\newblock \emph{Mathematical Programming: Series B}, 169:\penalty0 307--336,
  2018.

\bibitem[Zhang(2009)]{ZTSEV}
T.~Zhang.
\newblock Some sharp performance bounds for least squares regression with $l_1$
  regularization.
\newblock \emph{Annals of Statistics}, 37\penalty0 (5A):\penalty0 2109--2144,
  2009.

\bibitem[Zhang(2010{\natexlab{b}})]{ZT-Multi-a}
T.~Zhang.
\newblock Analysis of multi-stage convex relaxation for sparse regularization.
\newblock \emph{Journal of Machine Learning Research}, 11\penalty0
  (35):\penalty0 1087--1107, 2010{\natexlab{b}}.

\bibitem[Zhang(2013)]{ZT-Multi-b}
T.~Zhang.
\newblock Multi-stage convex relaxation for feature selection.
\newblock \emph{Bernoulli}, 19\penalty0 (5B):\penalty0 2277--2293, 2013.

\bibitem[Zhao and Yu(2006)]{Zhao-Lasso}
P.~Zhao and B.~Yu.
\newblock On model selection consistency of lasso.
\newblock \emph{Journal of Machine Learning Research}, 7\penalty0
  (90):\penalty0 2541--2563, 2006.

\bibitem[Zhou(2006)]{ZH-aLasso}
H.~Zhou.
\newblock The adaptive lasso and its oracle properties.
\newblock \emph{Journal of the American Statistical Association}, 101\penalty0
  (476):\penalty0 1418--1429, 2006.

\bibitem[Zhou and Li(2008)]{ZH-oneLasso}
H.~Zhou and R.~Li.
\newblock One-step sparse estimates in nonconcave penalized likelihood models
  (with discussion).
\newblock \emph{Annals of Statistics}, 36\penalty0 (4):\penalty0 1509--1533,
  2008.

\bibitem[Zhou et~al.(2021)Zhou, Xiu, and Qi]{Zhou-IHT}
S.~Zhou, N.~Xiu, and H.~Qi.
\newblock Global and quadratic convergence of newton hard-thresholding pursuit.
\newblock \emph{Journal of Machine Learning Research}, 22\penalty0
  (1):\penalty0 599--643, 2021.

\end{thebibliography}

\end{document}